\DeclareSymbolFont{AMSb}{U}{msb}{m}{n}
\theoremstyle{definition}
\newtheorem{definition}{Definition}[section]
\newtheorem{example}[definition]{Example}
\newtheorem{remark}[definition]{Remark}
\newtheorem{notation}[definition]{Notation}
\newtheoremstyle{thm} % <name> % (ambienti con dimostrazione)
        {3mm}% <Space above>
        {3mm}% <Space below>
        {\slshape}% <Body font> % 
        {0mm}% <Indent amount>
        {\bfseries}% <Theorem head font>
        {.}% <Punctuation after theorem head>
        {1mm}% <Space after theorem head>
        {}% <Theorem head spec (can be left empty, meaning 'normal')> 
\theoremstyle{thm}
\newtheorem{theorem}[definition]{Theorem}
\newtheorem{corollary}[definition]{Corollary}
\newtheorem{lemma}[definition]{Lemma}
\newtheorem{proposition}[definition]{Proposition}
\newtheorem{thm}{Theorem}
\DeclareMathOperator{\Spec}{Spec}
\DeclareMathOperator{\Hom}{Hom}
\DeclareMathOperator{\coho}{H}
\DeclareMathOperator{\shom}{\mathscr{H}\text{\kern -5.3pt {\calligra\large om}}\,\,}
\DeclareMathOperator{\coh}{Coh}
\DeclareMathOperator{\sext}{\mathscr{E}\text{\kern -4.0pt {\calligra\large xt}}\,\,}
\DeclareMathOperator{\id}{Id}
\DeclareMathOperator{\Spf}{Spf}
\DeclareMathOperator{\Fitt}{Fitt}
\title{From formal smoothings to geometric smoothings}
\author{Alessandro Nobile}
\date{}
\begin{document}

\begin{abstract}
Let $X$ be a projective, equidimensional, singular scheme over an algebraically closed field. Then the existence of a  geometric smoothing (i.e. a family of deformations of $X$ over a smooth base curve whose generic fibre is smooth) implies the existence of a formal smoothing as defined by Tziolas.
In this paper we address the reverse question giving sufficient conditions on $X$ that guarantee the converse, i.e. formal smoothability implies geometric smoothability.
This is useful in light of Tziolas' results giving sufficient criteria for the existence of formal smoothings.
\end{abstract}

\maketitle
{\hypersetup{linkcolor=black}
\tableofcontents}

%%%%%%%%%%%%%%%%%%%%%%
%%%%%%%%%%%%%%%%%%%%%%
\section{Introduction}
%%%%%%%%%%%%%%%%%%%%%%
%%%%%%%%%%%%%%%%%%%%%%
Let $X$ be a proper $k$-scheme of finite type over an algebraically closed field $k$ of characteristic $0$. A \emph{geometric smoothing} of $X$ is a Cartesian diagram
\begin{equation}
\begin{tikzcd}
X \arrow[d]\arrow[r, hook] & \mathcal{X} \arrow[d, "p"]\\
\Spec k \arrow[r, hook, "c"] & C
\end{tikzcd}
\end{equation}
where $C$ is a smooth curve, $c\in C$ is a closed point and $p$ is a flat, proper morphism such that $p^{-1}(\eta_C)=:\mathcal{X}_{\text{gen}}$ is smooth, where $\eta_C$ is the generic point of $C$. We say that $X$ is \emph{geometrically smoothable} if it has a geometric smoothing. Following \cite[Definition~11.6]{tziolas2010smoothings}, we define a \emph{formal smoothing} of $X$ to be a formal deformation
\[
\begin{tikzcd}
X \arrow[d] \arrow[r, hook] & \mathfrak{X} \arrow[d, "\mathfrak{p}"]\\
\Spf k \arrow[r, hook] & \Spf k\llbracket t\rrbracket
\end{tikzcd}
\]
such that there exists a $b\in\mathbb{N}$ with $\mathfrak{I}^b\subset\Fitt_{\dim X}(\Omega^1_{\mathfrak{X}/\Spf k\llbracket t\rrbracket})$, where $\mathfrak{I}$ is an ideal of definition of $\mathfrak{X}$ and $\Fitt_{a}(\Omega^1_{\mathfrak{X}/\Spf k\llbracket t\rrbracket})$ is the $a^{\text{th}}$ Fitting sheaf of ideals (see \cite[\href{https://stacks.math.columbia.edu/tag/0CZ3}{Tag 0CZ3}]{stacks-project}). We say that $X$ is \emph{formally smoothable} if it admits a formal smoothing. 
Note that if $X$ is smooth then it is geometrically (hence formally) smoothable. Furthermore, Tziolas proved that geometrical smoothability implies formal smoothability. The main result of this paper is the following:

\begin{thm}\label{theorem: A}
If $X$ is a projective, equidimensional and singular scheme over $k$ such that one of the following assumptions hold:
\begin{enumerate}
\item $\coho^2(X,\mathcal{O}_X)=0$,
\item if $X$ Gorenstein, then either the dualising sheaf $\omega_X$ or its dual $\omega_X^{\vee}$ is ample,
\end{enumerate}
then the formal smoothability of $X$ is equivalent to its geometrical smoothability.
\end{thm}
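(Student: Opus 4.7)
The strategy is the classical three-step passage from formal to geometric: first lift an ample line bundle from $X$ to the formal scheme $\mathfrak{X}$; second, apply Grothendieck's existence theorem to algebraise $\mathfrak{X}$ to a projective scheme $\mathcal{X} \to \Spec k\llbracket t\rrbracket$; third, use a Hilbert-scheme and Artin-approximation argument to produce a smooth curve as base. The hypotheses on $X$ enter only at the first step, where assumptions~(1) and~(2) of the theorem serve, in different ways, to overcome the obstruction to lifting an ample line bundle across the infinitesimal thickenings of $X$.

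For the ample lift under~(1), the successive obstructions to lifting a line bundle across the thickenings $X_n \hookrightarrow X_{n+1}$ (with $X_n := \mathfrak{X}\otimes_{k\llbracket t\rrbracket} k\llbracket t\rrbracket/(t^{n+1})$) lie in $\coho^2(X,\mathcal{O}_X)$, which vanishes by hypothesis; any ample $L$ on $X$ therefore lifts to a compatible system $(L_n)$ and hence to a relatively ample formal line bundle $\mathfrak{L}$ on $\mathfrak{X}$. Under~(2) I would bypass the obstruction calculation entirely: since $X$ is Gorenstein and each $X_n \to \Spec k\llbracket t\rrbracket/(t^{n+1})$ is flat over a Gorenstein Artinian base with Gorenstein fibres, every $X_n$ is Gorenstein and carries an invertible relative dualising sheaf; these glue into an invertible formal sheaf $\omega_{\mathfrak{X}/\Spf k\llbracket t\rrbracket}$ whose restriction to the closed fibre is $\omega_X$. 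Whichever of $\omega_X, \omega_X^\vee$ is ample by hypothesis then provides the desired relatively ample formal line bundle.

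The algebraisation step is then formal GAGA: with a relatively ample $\mathfrak{L}$ in hand, Grothendieck's existence theorem produces a projective $\mathcal{X}\to\Spec k\llbracket t\rrbracket$ whose formal completion along the closed fibre is $\mathfrak{X}$. The Fitting-ideal condition $\mathfrak{I}^b \subset \Fitt_{\dim X}(\Omega^1_{\mathfrak{X}/\Spf k\llbracket t\rrbracket})$ transfers to $\mathcal{X}$ and implies, upon inverting $t$, that $\Fitt_{\dim X}(\Omega^1_{\mathcal{X}_\eta/\eta}) = \mathcal{O}_{\mathcal{X}_\eta}$; hence $\Omega^1_{\mathcal{X}_\eta/\eta}$ is locally free of rank $\dim X$ and $\mathcal{X}_\eta$ is smooth. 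For the passage to a curve base, I would embed $\mathcal{X} \hookrightarrow \mathbb{P}^N_{k\llbracket t\rrbracket}$ via a high power of the relatively ample line bundle and interpret the data as a morphism $\Spec k\llbracket t\rrbracket \to H := \mathrm{Hilb}^P_{\mathbb{P}^N_k}$ sending the closed point to $[X]$ and the generic point into the open locus $U\subset H$ parametrising smooth subschemes. Let $Z$ be the scheme-theoretic image; it is integral, contains $[X]$, and meets $U$. Taking a resolution $\tilde Z \to Z$ (available in characteristic $0$), choosing a preimage $\tilde p$ of $[X]$, and cutting a smooth irreducible curve $C\subset \tilde Z$ through $\tilde p$ meeting the preimage of $U$ (by Bertini), I would pull back the universal family along $C\to H$ to obtain a flat projective family over $C$ with central fibre $X$ and smooth generic fibre.

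The main obstacle I anticipate is the ample lift under assumption~(2): one must verify that the relative dualising sheaves on the finite thickenings really are invertible and assemble into a formal invertible sheaf whose closed-fibre restriction is $\omega_X$, which rests on Gorenstein descent along the formal family and compatibility of the dualising sheaf under flat base change. The curve-base step also hides a subtlety, namely that the chosen curve $C$ must recover the specific $X$ rather than an abstractly isomorphic scheme, but this is automatic once $C$ is chosen to pass through a preimage of the precise point $[X]\in H$.
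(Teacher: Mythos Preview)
Your proposal is correct and follows essentially the same route as the paper: both arguments effectivise the formal smoothing via Grothendieck's existence theorem (using the $\coho^2$-vanishing to lift an ample bundle in case~(1) and the canonically defined relative dualising sheaf in case~(2)), deduce smoothness of the generic fibre over $\Spec k\llbracket t\rrbracket$, and then pass to a geometric base through the Hilbert scheme. The only noteworthy difference is in the last step: you resolve the scheme-theoretic image $Z\subset\text{Hilb}$ and invoke Bertini to find a smooth curve through a chosen preimage of $[X]$, whereas the paper factors through a reduced irreducible component of $\text{Hilb}$, applies an elementary curve-existence lemma (a general hyperplane-section argument, {\cite[Lemma 7.2.1]{kempf_1993}}) to find an affine curve through $[X]$ meeting the smooth locus, and then normalises that curve. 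The paper's route is lighter in that it avoids resolution of singularities, but both are valid and yield the same conclusion; your mention of ``Artin approximation'' in the strategy overview is a red herring, since neither argument actually invokes it.
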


The above theorem also extends Grothendieck's algebraisation theorem, see \cite[\href{https://stacks.math.columbia.edu/tag/089A}{Tag 089A}]{stacks-project}, since we have found a way to enlarge the parameter space from the spectrum of a local complete k-algebra to an affine curve.

%%%%%%%%%%%%%%%%%%%%%%%
\subsection{Motivation}
%%%%%%%%%%%%%%%%%%%%%%%
This result is motivated by the study of moduli spaces of surfaces of general type and their higher-dimensional analogues.

Moduli spaces of surfaces of general type are well studied and it is known that stable surfaces lie within the compactification of these moduli spaces.

A \emph{stable surface}, see \cite{Kollarbook}, is a proper two-dimensional reduced connected scheme satisfying one local and one global condition. The local condition bounds the badness of singularities that such surfaces can have, requiring them to be semi-log-canonical (see \cite[Definition~1.40]{Kollarbook}).
The global condition requires the dualising sheaf to be ample.

Since stable surfaces appear as points on the boundary of the moduli space of surfaces of general type, it is of great interest to understand which stable surfaces are geometrically smoothable.

In order to understand which surfaces can be smoothed, it is important to know which singularities among the semi-log-canonical ones can be smoothed. The class of such singularities is very broad since it admits both isolated and non-isolated singularities. If $X$ has isolated singularities, it is known \cite[Proposition~2.4.6]{sernesi2007deformations} that $\coho^2(X,\mathcal{T}_X)$ is an obstruction space to the extension of local smoothings to global ones.

The study of non-isolated singularities is not so easy. In \cite{Persson1983SomeEO}, they gave examples of non-smoothable singularities with normal crossing divisors, showing that not all non-isolated singularities are smoothable. Another difficulty that one has to face studying non-isolated singularities is that the Schlessinger's cotangent sheaf $\mathcal{T}^1$ (and its higher analogues), which is a sheaf supported on the singular locus, is difficult to describe and sometimes not finite dimensional, as shown in \cite{Fantechi2017OnTR}. An application to Godeaux surfaces of \Cref{theorem: A} is given in \cite{fantechi2021smoothing}.

%%%%%%%%%%%%%%%%%%%%%%%
\subsection{Structure of the paper}
%%%%%%%%%%%%%%%%%%%%%%%
This paper is an expository article on formal schemes, formal deformation and smoothing. It organized in four sections: in the first one it is collected an introduction to formal schemes, following the treatment of Illusie in \cite{FGAIll} and of Alonso, Jerem\'ias and P\'erez in \cite{tlr1} and \cite{tlr2}. The second section contains a discussion on formal deformation theory with, what we hope, a clear treatment on the differences between the various type of definition of deformations. We decide to add this information in order fix the terminology and better clarify what is the main point of this article. This section ends with a discussion of two different notions of smoothing of a scheme; in particular, in there we motivate the definition of formal smoothing as given by Tziolas in \cite{tziolas2010smoothings}. The third section is an overview of the Gorenstein condition and its behaviour under deformation, mostly following \cite{stacks-project}. Since we were not able to find a reference in the literature, in this section we include a proof of a classical result on good behaviour of the Gorenstein property under infinitesimal deformations. The fourth and last part contains the main result, its proof and an example of application to a real moduli problem.

%%%%%%%%%%%%%%%%%%%%%%%
\subsection{Conventions}\label{conventions}
%%%%%%%%%%%%%%%%%%%%%%%
All schemes are defined over an algebraically closed field $k$ of characteristic $0$. We will assume that all schemes will be of finite type and separated and we will denote by $\text{FTS}_k$ (or simply by FTS) the category whose objects are separated, finite type $k$-schemes and whose morphisms are morphisms of $k$-schemes.

%%%%%%%%%%%%%%%%%%%%%%%
\subsection{Acknowledgment}
%%%%%%%%%%%%%%%%%%%%%%%
I would like to thank the algebraic geometry group at SISSA for useful mathematical discussions and precious suggestions. A very special thanks is due to my PhD advisor, prof. Barbara Fantechi, for her constant patience, support and precious advices.\\
I would also like to thank the algebraic geometry group at Universt\'e du Luxembourg.

%%%%%%%%%%%%%%%%%%%%%%
%%%%%%%%%%%%%%%%%%%%%%
\section{Locally Noetherian formal schemes}
%%%%%%%%%%%%%%%%%%%%%%
%%%%%%%%%%%%%%%%%%%%%%
We recall for the reader's convenience some basic results on formal schemes. We follow Illusie's and Grothendieck's language and presentation in \cite{FGAIll} and \cite{MR0217083} respectively. At some points we will also refer to articles \cite{tlr1} and \cite{tlr2} by Alonso, Jerem\'ias and P\'erez.

%%%%%%%%%%%%%%%%%%%%%%
\subsection{The category of locally Noetherian formal schemes}
%%%%%%%%%%%%%%%%%%%%%%
\begin{definition}\label{def: adic ring}
An \emph{adic} (or \emph{$I$-adic}) \emph{Noetherian ring} is a topological Noetherian ring $A$ that admits an ideal $I$, called an \emph{ideal of definition}, such that
\begin{itemize}
\item $\{I^n\}_{n\in\mathbb{N}}$ is a fundamental system of neighbourhoods of $0$ in $A$;
\item the topology induced on $A$ turns $A$ into a separated and complete topological space.
\end{itemize}
\end{definition}

In general an ideal of definition is not unique. Indeed for another ideal $J$ to be an ideal of definition it is necessary and sufficient that there are two non-negative integers $n,m$ such that $J\supset I^m\supset J^n$.

We remark that $A$ is an $I$-adic Noetherian ring if and only if $A=\varprojlim_n A/I^{n}=:\hat{A}$, where $\hat{A}$ denotes the formal completion of $A$ along the ideal $I$.

Examples of adic Noetherian rings are the \emph{ring of formal power series} and the \emph{ring of restricted power series}, see \cite[Example 1.6]{tlr1}, in the following denoted respectively by $k\llbracket t\rrbracket$ and $A\{T_1,\dots,T_n\}$, with $A$ an $I$-adic Noetherian ring and $t$, $T_1,\dots,T_n$ indeterminates.

We wish to introduce the notion of an affine formal scheme, needed for the definition of a formal scheme. If $A$ is an $I$-adic Noetherian ring $A$ and $n$ a non-negative integer, we denote by $A_n$ the quotient $A/I^{n+1}$ and by $X_n$ the affine scheme $\Spec A_n$. We then have a chain of closed subschemes
\[
X_0\subset X_1\subset\cdots\subset X_n\subset\cdots
\]
and all these subschemes have the same underlying topological space $|\Spec A/I|$.

\begin{definition}\label{def: formal affine spectrum}
Let $A$ be an adic Noetherian ring with $I$ an ideal of definition. The \emph{affine formal spectrum of $A$} is the topologically ringed space $(\Spf A, \mathcal{O}_{\Spf A})$ where
\begin{itemize}
\item the topological space is
\[
\Spf A:=\{\mathfrak{p}\in\Spec A\colon I\subset \mathfrak{p}\}
\]
which is naturally homeomorphic to $|\Spec A/I |$. Equivalently, we could have defined $\Spf A$ to be the topological space made by open primes ideals of $A$;
\item the structure sheaf is
\[
\begin{aligned}
\mathcal{O}_{\Spf A}:=\varprojlim_n \mathcal{O}_{X_n}
\end{aligned}
\]
\end{itemize}
and is a sheaf of topological rings. Its topology is given by
\[
\Gamma(U,\mathcal{O}_{\Spf A})=\varprojlim_n\Gamma(U,\mathcal{O}_{X_n})
\]
for every open subset $U$ of $\Spf A$, where $\Gamma(U,\mathcal{O}_{X_n})$ has the discrete topology.
\end{definition}

The definition above does not depend on the ideal of definition. Indeed if a prime ideal $\mathfrak{p}$ of $A$ contains $I$ it also contains all of its powers, in particular it contains $I^m$ and hence $J^n$. Since $\mathfrak{p}$ is prime, it follows that it contains also $J$.

Since the topology of $\Spf A$ admits a base of neighbourhoods made by quasi-compact open subsets, it is enough to require that for every quasi-compact open subset $U$ of $\Spf A$,
\[
\Gamma(U,\mathcal{O}_{\Spf A})=\varprojlim_n\Gamma(U,\mathcal{O}_{X_n}),
\]
where $\Gamma(U,\mathcal{O}_{X_n})$ has the discrete topology (see \cite[(\textbf{1}.10.1.1)]{MR0217083}).

\begin{remark}\label{remark: canonical inclusion}
For an $I$-adic Noetherian ring $A$, the canonical morphism $A\to\hat{A}$ is an isomorphism and it induces a morphism of ringed spaces from $\Spf\hat{A}=\Spf A$ to $\Spec A$.
\end{remark}

We can now define what are affine Noetherian formal schemes and locally Noetherian formal schemes.

\begin{definition}\label{def: affine Noetherian formal scheme}
An \emph{affine Noetherian formal scheme} is a topologically ringed space isomorphic to an affine formal spectrum as in \Cref{def: formal affine spectrum}.
\end{definition}

\begin{definition}\label{def: formal scheme}
A \emph{locally Noetherian formal scheme} is a topologically ringed space $(\mathfrak{X}, \mathcal{O}_{\mathfrak{X}})$ such that every point has an open neighbourhood which is isomorphic to an affine Noetherian formal scheme.

A \emph{Noetherian formal scheme} is a quasi-compact locally Noetherian formal schemes.
\end{definition}

Since affine formal schemes are locally topologically ringed spaces, locally Noetherian formal schemes are locally topologically ringed spaces.

As in the classical case, we denote the locally Noetherian formal scheme $(\mathfrak{X}, \mathcal{O}_{\mathfrak{X}})$ by $\mathfrak{X}$.

\begin{notation}
For the rest of the article we will abbreviate ``locally Noetherian formal scheme'' by LNFS.
\end{notation}

Example of locally Noetherian formal schemes, which are in particular affine Noetherian formal schemes, are $\Spf k\llbracket t\rrbracket$ and $\Spf A\{T_1,\dots,T_n\}$. In what follows, we will denote the formal scheme $\Spf A\{T_1,\dots,T_n\}$ by $\mathbb{A}^{n}_{\Spf A}$ and we will call it the formal affine $n$-space. Observe that the underlying topological space of $\mathbb{A}^{n}_{\Spf A}$ is $\Spec\left( (A/I)[T_1,\dots, T_n]\right)$.

\begin{notation}
In what follows we will denote $\Spf k\llbracket t\rrbracket$ by $\mathfrak{S}$ and, for every non-negative integer $n$,
$S_n$ will denote $\Spec\frac{k\llbracket t\rrbracket}{(t^{n+1})}=\Spec\frac{k[ t]}{(t^{n+1})}$.
\end{notation}

Now we define morphisms between LNFSs.

\begin{definition}\label{def: morphism of formal schemes}
Let $\mathfrak{X}$ and $\mathfrak{Y}$ be two LNFSs. A \emph{morphism of LNFSs} is a morphism $\mathfrak{f}\colon\mathfrak{X}\to\mathfrak{Y}$ of locally ringed spaces such that for every open subset $\mathfrak{V}$ of $\mathfrak{Y}$ the induced map
\[
\Gamma(\mathfrak{V}, \mathcal{O}_{\mathfrak{Y}})\to\Gamma(\mathfrak{f}^{-1}(\mathfrak{V}), \mathcal{O}_{\mathfrak{X}})
\]
is continuous.
\end{definition}

As in the classical case of schemes, there is an equivalence of categories between adic Noetherian rings and affine Noetherian formal schemes, for more see \cite[(\textbf{1}.10.2.2)]{MR0217083}. Furthermore, the classical adjunction holds also in the case of LNFSs.

\begin{proposition}[{\cite[(\textbf{1}.10.4.6)]{MR0217083}}]
Let $\mathfrak{X}$ be a LNFS and let $A$ be a Noetherian adic ring. Then there is a natural bijection between morphisms of locally Noetherian formal schemes from $\mathfrak{X}$ to $\Spf A$ and continuous ring homomorphisms from $A$ to $\Gamma(\mathfrak{X}, \mathcal{O}_{\mathfrak{X}})$.
\end{proposition}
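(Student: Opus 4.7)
The plan is to mirror the proof of the classical $(\Spec,\Gamma)$-adjunction for schemes, reducing first to the affine case and then gluing. In the easy direction, given a morphism $\mathfrak{f}\colon\mathfrak{X}\to\Spf A$ of LNFSs, I would apply global sections. Since $\Gamma(\Spf A,\mathcal{O}_{\Spf A})\cong A$ as a topological ring, and $\mathfrak{f}$ induces continuous maps on sections by \Cref{def: morphism of formal schemes}, the global section map $A\to\Gamma(\mathfrak{X},\mathcal{O}_{\mathfrak{X}})$ is a continuous ring homomorphism.

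For the reverse direction, fix a continuous ring homomorphism $\varphi\colon A\to\Gamma(\mathfrak{X},\mathcal{O}_{\mathfrak{X}})$. I would first treat the affine case $\mathfrak{X}=\Spf B$ with $B$ a $J$-adic Noetherian ring, which is already encoded in the equivalence of categories between adic Noetherian rings and affine Noetherian formal schemes (\cite[Proposition~1.1.10]{tlr1}): continuity of $\varphi$ forces $\varphi(I^{m})\subseteq J^{n+1}$ for $m$ sufficiently large depending on $n$, so $\varphi$ descends to a compatible tower of ring maps $A/I^{m(n)+1}\to B/J^{n+1}$, inducing a compatible system of morphisms $\Spec B/J^{n+1}\to\Spec A/I^{m(n)+1}$ whose colimit in topologically ringed spaces is the desired morphism $\Spf B\to\Spf A$. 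For a general LNFS, I would choose an affine cover $\{\mathfrak{U}_\alpha=\Spf B_\alpha\}$; each composition $A\xrightarrow{\varphi}\Gamma(\mathfrak{X},\mathcal{O}_{\mathfrak{X}})\to B_\alpha$ is continuous, producing morphisms $\mathfrak{f}_\alpha\colon\mathfrak{U}_\alpha\to\Spf A$. Covering each intersection $\mathfrak{U}_\alpha\cap\mathfrak{U}_\beta$ by affine opens $\Spf C$, the restrictions of $\mathfrak{f}_\alpha$ and $\mathfrak{f}_\beta$ to $\Spf C$ both correspond via the affine case to the unique composition $A\to\Gamma(\mathfrak{X},\mathcal{O}_{\mathfrak{X}})\to C$, so they agree, and the $\mathfrak{f}_\alpha$ glue to a morphism $\mathfrak{f}_\varphi\colon\mathfrak{X}\to\Spf A$.

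To conclude, I would check that the two constructions are mutually inverse, which is essentially automatic because both are compatible with restriction to affine opens, where the affine equivalence already provides the bijection; naturality follows from functoriality of global sections. The main technical obstacle lies in the continuity bookkeeping for the affine case: one must exploit continuity of $\varphi$ to produce the tower of quotient maps and verify that the construction is independent of the chosen ideals of definition (using the compatibility recalled after \Cref{def: adic ring}). Once this is set up cleanly, the gluing argument is a formal copy of the scheme-theoretic one, with ``affine open'' replaced by ``affine Noetherian formal open''.
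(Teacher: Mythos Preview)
The paper does not give its own proof of this proposition; it simply cites \cite[(\textbf{1}.10.4.6)]{MR0217083} and moves on. Your sketch is the standard argument and is essentially how Grothendieck proves it in EGA: establish the bijection in the affine case via the anti-equivalence between adic Noetherian rings and affine Noetherian formal schemes, then globalise by gluing over an affine formal open cover. So there is nothing to compare against in the paper itself, and your approach matches the cited source.

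One small point worth tightening in your affine step: the tower you write down, $A/I^{m(n)+1}\to B/J^{n+1}$, has an $n$-dependent source, so the ``compatible system of morphisms $\Spec B/J^{n+1}\to\Spec A/I^{m(n)+1}$'' is not literally a morphism of inverse systems indexed by the same $\mathbb{N}$. It is cleaner either to compose each such map with the closed immersion $\Spec A/I^{m(n)+1}\hookrightarrow\Spf A$ (so that all maps land in the fixed target $\Spf A$ and the compatibility is with the transition maps of $\{\Spec B/J^{n+1}\}_n$ alone), or to construct the map of underlying topological spaces directly via $\mathfrak{p}\mapsto\varphi^{-1}(\mathfrak{p})$ (continuity of $\varphi$ ensures this lands in the open primes of $A$) and then describe the sheaf map on basic opens. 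Either route avoids any ambiguity, and neither changes the substance of your argument.
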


As a further example of formal schemes, we can consider the \emph{completion of a scheme along a closed subscheme}.

\begin{example}\label{example: formal completion along a closed subscheme}
Suppose that $X$ is a locally Noetherian scheme and consider a closed subscheme $Y$ of $X$ with sheaf of ideals given by $\mathcal{I}$. Then we can consider the schemes $X_n:=(Y, \mathcal{O}_{X}/\mathcal{I}^{n+1})$, for every $n\in\mathbb{N}$, which gives rise to the sequence of thickenings
\[
X_0\hookrightarrow X_1\hookrightarrow\cdots X_n\hookrightarrow\cdots
\]
Taking now the colimit we get a LNFS, denoted by $\hat{X}_{/Y}$ and called \emph{the formal completion of $X$ along $Y$}.
\end{example}

We point out that, if $Y=X$, then $\hat{X}_{/X}=X$. Therefore the category of LNFSs contains the category of Noetherian schemes.

However, Hironaka and Matsumura in \cite[Theorem~(5.3.3) page 81]{hironaka1968formal} and independently Hartshorne in \cite[Example 3.3 page 205]{hartshorne2006ample} constructed two examples showing that not all formal schemes appear as the completion of a single scheme along a closed subscheme. This consideration motivates the following definition.

\begin{definition}\label{def: algebraisable formal scheme}
A LNFS $\mathfrak{X}$ is called \emph{algebraisable} if there are a scheme $X$ and a closed subscheme $Y$ of $X$ such that $\mathfrak{X}=\hat{X}_{/Y}$.
\end{definition}

%%%%%%%%%%%%%%%%%
\subsection{Sheaves on LNFSs}
%%%%%%%%%%%%%%%%%
We now define the notion of a coherent formal sheaf on a LNFS. In the classical case of affine Noetherian schemes there is the functor $\widetilde{(-)}$ that associates to any finitely generated module its coherent sheaf. Similarly, in the formal case there is the functor $(-)^{\Delta}$ which associates to any finitely generated module its formal coherent sheaf.

Note that if $A$ is an adic Noetherian ring, then every $A$-module $M$ has an induced $I$-adic topology where a system of fundamental neighbourhoods of $0$ is given by $\{I^n\cdot M\}_{n\in\mathbb{N}}$.

\begin{notation}\label{notation: homomorphisms instead of continuous homomorphisms}
If $A$ is a Noetherian $I$-adic ring and $M$ and $N$ are finitely generated $A$-modules that are separated and complete in the induced $I$-adic topology, then, by \cite[(\textbf{0}.7.8.1)]{MR0217083} it follows that every $A$-module homomorphism is automatically continuous. Therefore, in what follows, we will write $\Hom_{A}(M,N)$ in place of $\Hom_{A-\text{cont}}(M,N)$.

Furthermore, by \cite[(\textbf{0}.7.8.2)]{MR0217083} we have a canonical isomorphism
\[
\Hom_{A}(M,N)\stackrel{\cong}{\rightarrow}\varprojlim_{n}{}{}\Hom_{\frac{A}{I^{n+1}}}\left(\frac{M}{I^{n+1}M},\frac{N}{I^{n+1}N}\right).
\]
From this we conclude that, if $A$ is a Noetherian $I$-adic ring and $M$ is a finitely generated $A$-module, then
\[
M^{\vee}:=\Hom_{A}(M, A)=\varprojlim_{n}{}{}\Hom_{\frac{A}{I^{n+1}}}\left(\frac{M}{I^{n+1}M},\frac{A}{I^{n+1}}\right)=\varprojlim_{n}\left(\frac{M}{I^{n+1}M}\right)^{\vee}.
\]
\end{notation}

\begin{definition}\label{def: Delta construction for modules}
Let $A$ be an $I$-adic Noetherian ring and let $M$ be a finitely generated $A$-module. Then we define the \emph{coherent formal sheaf} $M^{\Delta}$ on $\Spf A$ to be the completion of $\widetilde{M}$ along the ideal sheaf $\widetilde{I}$ of the closed embedding $\Spec A/I\hookrightarrow\Spec A$:
\[
M^{\Delta}:=\varprojlim_n \frac{\widetilde{M}}{\widetilde{I^n}\cdot\widetilde{M}}.
\]
\end{definition}

The functor $(-)^{\Delta}$ satisfies similar properties of the functor $\widetilde{(-)}$, for more see \cite[(\textbf{1}.10.10.2)]{MR0217083}.

\begin{definition}\label{def: ideal sheaf of definition} 
An \emph{ideal of definition} of a LNFS $\mathfrak{X}$ is a formal coherent sheaf of ideals $\mathfrak{I}$ of $\mathcal{O}_{\mathfrak{X}}$ such that for any point $x\in\mathfrak{X}$ there exists a formal affine neighbourhood $\Spf A$ of $x$ in $\mathfrak{X}$ and there exists an ideal of definition $I$ of $A$ such that $\mathfrak{I}|_{\Spf A}=I^{\Delta}$.
\end{definition}

A formal coherent sheaf $\mathfrak{I}$ on a LNFS $\mathfrak{X}$ is an ideal of definition if and only if $(\mathfrak{X},\frac{\mathcal{O}_{\mathfrak{X}}}{\mathfrak{I}})$ is a scheme.
Actually, for any LNFS $\mathfrak{X}$ there exists a maximal ideal of definition $\mathfrak{I}$ which is the unique ideal of definition such that $(\mathfrak{X},\frac{\mathcal{O}_{\mathfrak{X}}}{\mathfrak{I}})$ is a reduced scheme.
In a Noetherian formal scheme the ideal of definition is not unique; indeed, any other formal coherent sheaf of ideals $\mathfrak{J}$ on the LNFS $\mathfrak{X}$ is an ideal of definition if and only if there are positive integers $m,n$ such that the chain of inclusions
$\mathfrak{J}\supset\mathfrak{I}^{m}\supset\mathfrak{J}^n$ holds.

\begin{remark}\label{rem: definition of formal scheme as collection of infinitesimal neighbourhoods}
As in the affine formal case, it is also possible to define LNFSs as a collection of all of their infinitesimal neighbourhoods (or thickenings).

More precisely, let $\mathfrak{X}$ be a LNFS and let $\mathfrak{I}$ be an ideal of definition. For every $n\in\mathbb{N}$, define $(X_n,\mathcal{O}_{X_n})$ to be the ringed space $(|\mathfrak{X}|, \mathcal{O}_{\mathfrak{X}}/\mathfrak{I}^{n+1})$ which is a locally Noetherian scheme. This induces a sequence of closed embeddings
\[
X_0\hookrightarrow X_1\hookrightarrow X_2\hookrightarrow \cdots\hookrightarrow X_n\hookrightarrow \cdots
\]
whose ideals of definition are nilpotent and all the maps on the underlying topological spaces are the identity. Then $\mathfrak{X}$ can be recovered from the above sequence of thickenings by passing through the direct limit in the category of locally Noetherian topologically ringed spaces, i.e.
\[
\mathfrak{X}=\varinjlim_n X_n.
\]
In particular there are natural morphisms of ringed spaces
\[
\alpha_n\colon X_n\to\mathfrak{X},
\]
where $\alpha_n$ is the identity on the underlying topological space and the map of sheaves of topological rings is just the quotient map
\[
\alpha_n^{\natural}\colon\mathcal{O}_{\mathfrak{X}}\to\mathcal{O}_{X_n}=\frac{\mathcal{O}_{\mathfrak{X}}}{\mathfrak{I}^{n+1}}.
\]
Conversely, see \cite[(\textbf{1}.10.6.3)]{MR0217083}, given a collection $\{X_n\}_{n\in\mathbb{N}}$ of locally Noetherian schemes satisfying:
\begin{itemize}
\item[(i)] for every $n$, there are morphisms of schemes $\psi_{n+1,n}\colon X_n\to X_{n+1}$ such that they are homeomorphisms on the underlying topological spaces and induce surjective morphisms of sheaves $\mathcal{O}_{X_{n+1}}\to\mathcal{O}_{X_{n}}$;
\item[(ii)] if $\mathcal{J}_n:=\ker(\mathcal{O}_{X_n}\to\mathcal{O}_{X_0})$, then $\ker(\mathcal{O}_{X_n}\to\mathcal{O}_{X_m})=\mathcal{J}_n^{m+1}$, for $m\leq n$;
\item[(iii)] $\mathcal{J}_1\in\coh(X_0)$;
\end{itemize}
then the topologically ringed space $\mathfrak{X}:=\varinjlim_n X_n$ obtained by taking the direct limit is a LNFS. Moreover, denoting by $\mathfrak{I}:=\ker(\mathcal{O}_{\mathfrak{X}}\to\mathcal{O}_{X_0})$, then $\mathfrak{I}$ is an ideal of definition of $\mathfrak{X}$ and satisfies the following properties
\[\mathfrak{I}=\varprojlim_n\mathcal{J}_n\,\,\,\,\,\,\,\,\,\,\,\,\,\,\text{ and }\,\,\,\,\,\,\,\,\,\,\,\,\,\,\mathfrak{I}^{n+1}=\ker(\mathcal{O}_{\mathfrak{X}}\to\mathcal{O}_{X_n}).
\]
\end{remark}

\begin{definition}\label{def: coherent formal sheaf}
A \emph{coherent formal sheaf} on a LNFS $\mathfrak{X}$ is a sheaf $\mathfrak{F}$ such that, for every open Noetherian formal affine subset $\mathfrak{U}=\Spf A$ of $\mathfrak{X}$, there exists a finitely generated $A$-module $M$ with $\mathfrak{F}|_{\mathfrak{U}}=M^{\Delta}$.
\end{definition}

Next we give an interpretation of coherent formal sheaves on a LNFS as the limit of coherent sheaves on all thickenings.

\begin{remark}\label{remark: formal coherent sheaf as collection of coherent sheaves on the thickenings}
Let $\mathfrak{X}$ be a LNFS, let $\mathfrak{I}$ be an ideal of definition and let $\mathfrak{F}$ be a coherent formal sheaf of $\mathcal{O}_{\mathfrak{X}}$-modules. For every $n$, let us denote by $X_n$ the locally Noetherian scheme as defined in \Cref{rem: definition of formal scheme as collection of infinitesimal neighbourhoods}. If, for every $n$, we define
\[
\mathscr{F}_n:=\frac{\mathfrak{F}}{\mathfrak{I}^{n+1}\mathfrak{F}},
\]
then we have that $\mathscr{F}_n\in\coh(X_n)$ and we recover $\mathfrak{F}$ by considering $\varprojlim_n\mathscr{F}_n$.
Conversely, see \cite[(\textbf{1}.10.11.3)]{MR0217083}), let $\mathfrak{X}$ be a locally Noetherian scheme and $\mathfrak{I}$ and ideal of definition of $\mathfrak{X}$. Let $\{X_n\}_{n\in\mathbb{N}}$ be a collection of locally Noetherian schemes defining $\mathfrak{X}$ as in \Cref{rem: definition of formal scheme as collection of infinitesimal neighbourhoods} and, for $m\leq n$, let $\psi_{n,m}\colon X_m\to X_n$ denote the canonical maps. Suppose that for every $n\in\mathbb{N}$, $\mathscr{F}_n$ is a coherent sheaf on $X_n$ together with morphisms, for $m\leq n$
\[
\phi_{n,m}\colon\mathscr{F}_{m}\to(\psi_{n,m})_*\mathscr{F}_{n},
\]
such that for every $l\geq m\geq n$ we have $\phi_{n,m}\circ\phi_{m,l}=\phi_{n,l}$\footnote{The conditions listed here are equivalent to requiring that the system $\{\mathscr{F}_n, \phi_{m,n}\}_{n,m\in\mathbb{N}}$ be a projective system.}. Then the limit $\mathfrak{F}:=\varprojlim_n\mathscr{F}_n$ is a coherent formal sheaf on $\mathfrak{X}$.
\end{remark}

\begin{definition}\label{def: locally free formal sheaf} Let $\mathfrak{X}$ be a LNFS and $r\in\mathbb{N}$. We say that a formal coherent sheaf $\mathfrak{F}$ on $\mathfrak{X}$ is locally free of rank $r$ if for every open Noetherian affine subset $\mathfrak{U}=\Spf A$ of $\mathfrak{X}$, the finitely generated $A$-module $M$ (which exists since $\mathfrak{F}$ is coherent) is free of rank $r$.
\end{definition}

We can give an equivalent definition of formal coherent sheaf on a LNFS based on the infinitesimal thickening description of LNFSs. It is done as follows: a formal coherent sheaf $\mathfrak{F}$ on a LNFS $\mathfrak{X}$ is locally free of finite rank $r$ if each sheaf $\mathscr{F}_n:=\frac{\mathfrak{F}}{\mathfrak{I}^{n+1}\mathfrak{F}}$ is locally free of the same rank $r$, for all natural numbers $n$, where $\mathfrak{I}$ is an ideal of definition of the formal scheme $\mathfrak{X}$.

%%%%%%%%%%%%%%%%%%%%%
\subsection{Adic morphisms between LNFSs}
%%%%%%%%%%%%%%%%%%%%%
In order to give a description in terms of thickenings for morphisms of formal schemes, we need to restrict our interest to a particular kind of morphisms: the adic morphisms.

\begin{definition}\label{def: adic morphism}
A morphism $\mathfrak{f}\colon\mathfrak{X}\to\mathfrak{Y}$ of LNFSs is called an \emph{adic morphism} if there exists an ideal of definition $\mathfrak{J}$ of $\mathfrak{Y}$ such that $\mathfrak{f}^*\mathfrak{J}\cdot\mathcal{O}_{\mathfrak{X}}$ is an ideal of definition of $\mathfrak{X}$.
\end{definition}

The definition of an adic morphism does not depend on the choice of the ideal of definition; indeed one could equivalently ask that the condition $\mathfrak{f}^*\mathfrak{J}\cdot\mathcal{O}_{\mathfrak{X}}$ holds \textit{for all} ideals of definition of $\mathfrak{Y}$ (see \cite[(\textbf{1}.10.12.1)]{MR0217083}). Observe that if $\mathfrak{f}\colon\mathfrak{X}\to\mathfrak{Y}$ is an adic morphism bethween LNFSs, then the topology on $\mathcal{O}_{\mathfrak{Y}}$ determines the topology on $\mathcal{O}_{\mathfrak{X}}$.

\begin{remark}\label{rem: morphism of formal schemes as a collection of compatible morphisms of schemes}
Suppose that $\mathfrak{f}\colon\mathfrak{X}\to\mathfrak{Y}$ is an adic morphism of LNFSs and let $\mathfrak{J}$ and $\mathfrak{I}:=\mathfrak{f}^*\mathfrak{J}\cdot\mathcal{O}_{\mathfrak{X}}$ be ideals of definition of $\mathfrak{Y}$ and $\mathfrak{X}$ respectively.

Then we can consider the sequences of thickenings
\[
X_0\hookrightarrow X_1\hookrightarrow\cdots X_n\hookrightarrow\cdots
\,\,\,\,\text{ and }\,\,\,\,
Y_0\hookrightarrow Y_1\hookrightarrow\cdots Y_n\hookrightarrow\cdots
\]
as in \Cref{rem: definition of formal scheme as collection of infinitesimal neighbourhoods}.
Since the morphism was supposed to be adic, we get that for every $n\in\mathbb{N}$, $\mathfrak{f}^*(\mathfrak{J}^{n+1})\cdot\mathcal{O}_{\mathfrak{X}}=\mathfrak{I}^{n+1}$. Therefore we have induced morphisms
\[
f_n\colon X_n\to Y_n
\]
such that all the squares
\begin{equation}\label{eq: adic morphism have Cartesian squares}
\begin{tikzcd}
X_n \arrow[r, "f_n"] \arrow[d, hook] & Y_n \arrow[d, hook]\\
X_{n+1} \arrow[r, "f_{n+1}"] & Y_{n+1}
\end{tikzcd}
\end{equation}
are Cartesian. Then $\mathfrak{f}$ can be recovered by the collection of morphisms $\{f_n\}_{n\in\mathbb{N}}$ by considering the colimit, i.e. $\mathfrak{f}=\varinjlim_n f_n$.

Conversely (see \cite[(8.1.5)]{FGAIll}), any system of morphisms of locally Noetherian schemes $\{f_n\colon X_n\to Y_n\}_{n\in\mathbb{N}}$ such that all squares \cref{eq: adic morphism have Cartesian squares} are Cartesian induces an adic morphism of LNFSs by considering the colimit.
\end{remark}

%%%%%%%%%%%%%%%%%%%%%
\subsection{Properties of adic morphisms}
%%%%%%%%%%%%%%%%%%%%%
Now we introduce the notions of finite type, properness and flatness for a morphism of formal schemes.

\begin{definition}\label{def: morphism of finite type of formal scheme}
Let $\mathfrak{X}$ and $\mathfrak{Y}$ be LNFSs. A morphism $\mathfrak{f}\colon\mathfrak{X}\to\mathfrak{Y}$ is said to be \emph{of finite type} if $\mathfrak{f}$ is an adic morphism and the induced morphism $f_0\colon X_0\to Y_0$ is of finite type.
\end{definition}

\begin{definition}\label{def: proper morphism of formal schemes}
A morphism $\mathfrak{f}\colon\mathfrak{X}\to\mathfrak{Y}$ of LNFSs is \emph{proper} if it is of finite type and $f_0\colon X_0\to Y_0$ is proper.
\end{definition}

\begin{definition}\label{def: flat morphism of formal schemes}
Let $\mathfrak{f}\colon\mathfrak{X}\to\mathfrak{Y}$ be a morphism of LNFSs. We say that $\mathfrak{f}$ is \emph{flat} if it is adic and for every $x\in\mathfrak{X}$, $\mathcal{O}_{\mathfrak{X},x}$ is a flat $\mathcal{O}_{\mathfrak{Y}, \mathfrak{f}(x)}$-module.
\end{definition}

\begin{proposition}
Let $\mathfrak{f}\colon\mathfrak{X}\to\mathfrak{Y}$ be an adic morphism of LNFSs, let $\{f_n\colon X_n\to Y_n\}_{n\in\mathbb{N}}$ be a compatible collection associated to $\mathfrak{f}$ and $\mathcal{P}$ be one of the following properties of morphisms: of finite type, proper, flat. Then the following conditions are equivalent:
\begin{enumerate}
\item $\mathfrak{f}$ has $\mathcal{P}$;
\item $f_n$ has $\mathcal{P}$, for every $n\in\mathbb{N}$.
\end{enumerate}
\end{proposition}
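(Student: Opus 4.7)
I treat the three properties separately. The Cartesian squares of \Cref{rem: morphism of formal schemes as a collection of compatible morphisms of schemes} give $X_n = \mathfrak{X} \times_{\mathfrak{Y}} Y_n$ and, for $m \leq n$, $X_m = X_n \times_{Y_n} Y_m$, where each $Y_m \hookrightarrow Y_n$ is a closed immersion defined by a nilpotent ideal sheaf. These two observations drive the whole proof.

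\textbf{Finite type and proper.} For finite type, $(2) \Rightarrow (1)$ is immediate from \Cref{def: morphism of finite type of formal scheme}: $\mathfrak{f}$ is adic by hypothesis and $f_0$ is of finite type by the $n = 0$ case of (2). Conversely, for $(1) \Rightarrow (2)$ I work locally with $\mathfrak{Y} = \Spf A$ and $\mathfrak{X} = \Spf B$: reduction modulo $I^{n+1}$ collapses restricted power series to ordinary polynomials, so that $B/(IB)^{n+1}$ is finitely generated over $A/I^{n+1}$. The proper case then follows the same pattern: $(2) \Rightarrow (1)$ combines the finite-type direction (which gives $\mathfrak{f}$ of finite type) with the $n = 0$ case of (2) (which gives $f_0$ proper), yielding the hypotheses of \Cref{def: proper morphism of formal schemes}. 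Conversely, $(1) \Rightarrow (2)$ uses that $X_0 = X_n \times_{Y_n} Y_0$ together with the standard fact that properness is preserved and reflected by the nilpotent thickening $Y_0 \hookrightarrow Y_n$.

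\textbf{Flat, and main obstacle.} For $(1) \Rightarrow (2)$, flatness of $f_n$ follows from flatness of $\mathfrak{f}$ by base change along $Y_n \hookrightarrow \mathfrak{Y}$. For $(2) \Rightarrow (1)$, fix $x \in \mathfrak{X}$, set $y = \mathfrak{f}(x)$, and let $A = \mathcal{O}_{\mathfrak{Y}, y}$ and $B = \mathcal{O}_{\mathfrak{X}, x}$, with $I$ the stalk of the ideal of definition $\mathfrak{J}$ at $y$ and $J = IB$ (equal to the stalk of $\mathfrak{I}$ at $x$ by adicness). Then $A$ and $B$ are Noetherian local rings and $B$ is $J$-adically separated, so the local criterion of flatness applies: $B$ is flat over $A$ if and only if $B/J^{n+1}$ is flat over $A/I^{n+1}$ for every $n$, which is precisely the stalk-flatness of $f_n$ at $x$ assumed in (2). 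This is the main obstacle of the proof: whereas the finite-type and proper cases reduce directly to the definitions and the nilpotent-thickening principle, the flatness equivalence requires a careful invocation of the local criterion in the LNFS setting, relying on the Noetherianity and adic separation of the stalks on $\mathfrak{X}$ and $\mathfrak{Y}$.
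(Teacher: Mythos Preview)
The paper does not actually prove this proposition; it states it and, immediately after, points the reader to \cite[Proposition~1.4.7]{tlr1} for the local criterion of flatness for formal schemes (and to \cite[Example~1.4.5]{tlr1} for why the adic hypothesis is essential). So there is no in-paper argument to compare against line by line.

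Your proposal is correct and is exactly the argument one would expect the cited reference to contain. For finite type and proper, you correctly observe that the paper's Definitions~\ref{def: morphism of finite type of formal scheme} and~\ref{def: proper morphism of formal schemes} already make $(2)\Rightarrow(1)$ tautological, and for $(1)\Rightarrow(2)$ the nilpotent-thickening principle (finite type and properness are insensitive to nilpotents on the base) does the job; your alternative route through restricted power series is also fine, though it tacitly invokes the EGA-style characterisation of formal finite type rather than the paper's bare definition. For flatness, your use of the local criterion is the right tool and matches the reference the paper cites: the stalks $A=\mathcal{O}_{\mathfrak{Y},y}$ and $B=\mathcal{O}_{\mathfrak{X},x}$ are Noetherian local, adicness gives $J=IB\subset\mathfrak{m}_x$, hence $B$ (and every $\mathfrak{a}\otimes_A B$ for $\mathfrak{a}\subset A$ finitely generated) is $J$-adically separated by Krull's intersection theorem, so the local criterion applies and reduces flatness of $B$ over $A$ to flatness of each $B/J^{n+1}$ over $A/I^{n+1}$, which is precisely the stalk condition for $f_n$.

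In short: the paper offers no proof, and yours is sound; the flatness step is indeed the only nontrivial one, and your invocation of the local criterion is exactly what the paper's reference to \cite{tlr1} is gesturing at.
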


We point out that we could have defined a flat morphism of LNFSs $\mathfrak{f}\colon\mathfrak{X}\to\mathfrak{Y}$ without assuming it to be adic. However, with that choice, we would not be able to deduce the flatness of $\mathfrak{f}$ from the flatness of all $\{f_n\}_{n\geq0}$ and vice versa. See \cite[Proposition~3.3]{tlr2} for the local criterion of flatness for formal schemes, and \cite[Example~3.2]{tlr2} gives a counter example.

We conclude the section by presenting one result needed in the proof of the main result.

\begin{theorem}[{\cite[II - Ex. 9.6(c)]{hartshorne1977algebraic}}]\label{teo: locally free sheaves on each nilpotent subscheme induce a locally free sheaf on the completion} 
Let $\mathfrak{X}$ be a LNFS, let $\mathfrak{I}$ be an ideal of definition of $\mathfrak{X}$ and, for each $n\in\mathbb{N}$, let us denote by $X_n$ the scheme $(\mathfrak{X}, \mathcal{O}_{\mathfrak{X}}/\mathfrak{I}^n)$. Suppose that, for every $n\in\mathbb{N}$, we are given invertible sheaves $\mathscr{L}_n$ on $X_n$ together with isomorphisms $\mathscr{L}_{n+1}\otimes_{\mathscr{O}_{X_{n+1}}}\mathscr{O}_{X_n}\cong\mathscr{L}_n$. Then the sheaf
\[
\mathfrak{L}:=\varprojlim_n\mathscr{L}_n
\]
is an invertible sheaf on $\mathfrak{X}$.
\end{theorem}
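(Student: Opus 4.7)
The proof strategy is to realise $\mathfrak{L}$ as a coherent formal sheaf via the inverse-limit description of \Cref{remark: formal coherent sheaf as collection of coherent sheaves on the thickenings}, and then to establish local freeness of rank $1$ by inductively lifting a local trivialisation. The given isomorphisms $\mathscr{L}_{n+1}\otimes_{\mathcal{O}_{X_{n+1}}}\mathcal{O}_{X_n}\cong\mathscr{L}_n$ organise the family $\{\mathscr{L}_n\}$ into a projective system of coherent sheaves on the sequence of thickenings $X_0\hookrightarrow X_1\hookrightarrow\cdots$, and that remark shows that $\mathfrak{L}=\varprojlim_n\mathscr{L}_n$ is a coherent formal $\mathcal{O}_{\mathfrak{X}}$-module satisfying $\mathfrak{L}/\mathfrak{I}^{n+1}\mathfrak{L}\cong\mathscr{L}_n$ for every $n$.

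For local freeness I would pass to a formal affine neighbourhood $\mathfrak{U}=\Spf A$ of a given point and shrink $\mathfrak{U}$ so that $\mathscr{L}_0|_{X_0\cap\mathfrak{U}}$ is free of rank $1$, generated by a section $s_0$. Since each closed embedding $X_n\hookrightarrow X_{n+1}$ is a nilpotent thickening, any lift $s_{n+1}\in\Gamma(\mathfrak{U},\mathscr{L}_{n+1})$ of $s_n$ is automatically a generator by Nakayama's lemma; such a lift exists because the transition map $\mathscr{L}_{n+1}\to\mathscr{L}_n$ is surjective with kernel a coherent sheaf on an affine Noetherian scheme, so passing to global sections over $\mathfrak{U}$ preserves surjectivity. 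The compatible family $(s_n)_n$ thus constructed defines a section $s\in\Gamma(\mathfrak{U},\mathfrak{L})$, and multiplication by $s$ yields a morphism $\varphi\colon\mathcal{O}_{\mathfrak{U}}\to\mathfrak{L}|_{\mathfrak{U}}$ whose reduction modulo $\mathfrak{I}^{n+1}$ is the isomorphism $\mathcal{O}_{X_n}\stackrel{\sim}{\to}\mathscr{L}_n|_{X_n\cap\mathfrak{U}}$ sending $1$ to $s_n$. Since both $\mathcal{O}_{\mathfrak{U}}$ and $\mathfrak{L}|_{\mathfrak{U}}$ are complete for the $\mathfrak{I}$-adic topology, an isomorphism at every finite level upgrades to $\varphi$ itself being an isomorphism, proving $\mathfrak{L}|_{\mathfrak{U}}\cong\mathcal{O}_{\mathfrak{U}}$.

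The main technical point is to arrange that each lift $s_{n+1}$ maps exactly to $s_n$ under the given transition morphism $\mathscr{L}_{n+1}\to\mathscr{L}_n$, not merely to some other generator; this is what allows $(s_n)_n$ to represent an element of the inverse limit, and it is precisely this step that forces the appeal to the vanishing of higher cohomology of coherent sheaves on affines after suitable shrinking of $\mathfrak{U}$. Once this compatibility is secured, the remaining completeness argument is routine.
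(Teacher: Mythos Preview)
The paper does not supply its own proof of this theorem: it is stated with a citation to \cite[II, Ex.~9.6(c)]{hartshorne1977algebraic} and then the section ends. So there is nothing in the paper to compare against beyond the reference itself.

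Your argument is the standard one and is correct. You use \Cref{remark: formal coherent sheaf as collection of coherent sheaves on the thickenings} to get coherence of $\mathfrak{L}$, then establish local freeness of rank one by fixing a formal affine $\mathfrak{U}=\Spf A$ on which $\mathscr{L}_0$ is trivial and lifting a generator step by step through the thickenings via Nakayama. The only point worth tightening is your phrasing around the surjectivity of $\Gamma(\mathfrak{U},\mathscr{L}_{n+1})\to\Gamma(\mathfrak{U},\mathscr{L}_n)$: you invoke ``vanishing of higher cohomology of coherent sheaves on affines after suitable shrinking of $\mathfrak{U}$'', but no further shrinking is needed once $\mathfrak{U}=\Spf A$ is chosen affine. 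The kernel of $\mathscr{L}_{n+1}\to\mathscr{L}_n$ is a coherent sheaf on the affine Noetherian scheme $\Spec(A/I^{n+1})$, so its $\coho^1$ vanishes on all of $\mathfrak{U}$ already; the inductive lifting therefore works on the original $\mathfrak{U}$ without passing to a smaller open. With that adjustment, the compatible system $(s_n)_n$ defines $s\in\Gamma(\mathfrak{U},\mathfrak{L})$ and the isomorphism $\mathcal{O}_\mathfrak{U}\to\mathfrak{L}|_\mathfrak{U}$ follows by completeness exactly as you say.
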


%%%%%%%%%%%%%%%%%%%%%%
%%%%%%%%%%%%%%%%%%%%%%
\section{On deformations and smoothings}
%%%%%%%%%%%%%%%%%%%%%%
%%%%%%%%%%%%%%%%%%%%%%
In this section we introduce various definitions of deformations of a scheme and we discuss their relationship. Then we present and explain the two different definitions of smoothing of a scheme used in this paper.

%%%%%%%%%%%%%%%%%%%%%%%%
\subsection{Introducing formal deformations}
%%%%%%%%%%%%%%%%%%%%%%%%
\begin{definition}\label{def: formal deformation}
Let $X$ be a scheme and let $(R, \mathfrak{m})$ be a complete local ring. A \emph{formal deformation} of $X$ over $R$ is a Cartesian diagram
\begin{equation}\label{eq: formal deformation}
\begin{tikzcd}
X \arrow[d] \arrow[r, hook] & \mathfrak{X} \arrow[d, "\mathfrak{f}"]\\
\Spf(\frac{R}{\mathfrak{m}}) \arrow[r, hook] & \Spf R
\end{tikzcd}
\end{equation}
with $\mathfrak{f}$ a flat morphism.
\end{definition}

\begin{notation}
In the future, in order to ease the notation, we will denote any deformation (either classical or formal) by its flat morphism. For example, we will refer to the formal deformation of \cref{eq: formal deformation} only by $\mathfrak{f}\colon\mathfrak{X}\to\Spf R$.
\end{notation}

As we have seen before in \Cref{rem: definition of formal scheme as collection of infinitesimal neighbourhoods}, formal schemes can be equivalently described as compatible collection of infinitesimal thickenings. A similar description can be given for formal deformations.

\begin{remark}\label{rem: formal deformations and thickenings}
Fix a formal deformation of a scheme $X$ as in \cref{eq: formal deformation} and, for any non-negative integer $n$, let us denote by $R_n$ the quotient ring $R/\mathfrak{m}^{n+1}$. Then, for any $n\geq0$, we have diagrams
\[
\begin{tikzcd}
 & \mathfrak{X} \arrow[d, "\mathfrak{f}"]\\
 \Spec R_n \arrow[r, hook] & \Spf R.
\end{tikzcd}
\]
Pulling back $\mathfrak{f}$ along the closed immersion $\Spec R_n\hookrightarrow\Spf R$, we obtain a collection of deformations $\{f_n\colon\mathcal{X}_n\to\Spec R_n\}_{n\geq0}$ of $X$ over $\Spec R_n$. Moreover, by construction, all these deformations of $X$ are compatible, i.e. for every non negative integer $n$, we have Cartesian diagrams
\[
\begin{tikzcd}
\mathcal{X}_n \arrow[r, hook] \arrow[d, "f_n"] & \mathcal{X}_{n+1} \arrow[d, "f_{n+1}"]\\
\Spec R_n \arrow[r, hook] & \Spec R_{n+1}.
\end{tikzcd}
\]
\end{remark}

The converse also holds true, as stated in the following proposition.

\begin{proposition}[{\cite[Proposition~21.1]{hartshorne2009deformation}}]\label{prop: formal deformation as compatible collection of deformations}
Let $(R,\mathfrak{m})$ be an adic local Noetherian ring with residue field $k$, let $X$ be a scheme and define and, for every non-negative integer $n$, let $R_n:=R/\mathfrak{m}^{n+1}$. Suppose that for every $n\in\mathbb{N}$ we are given a family $\{f_n\colon\mathcal{X}_{n}\to\Spec R_{n}\}_{n\geq0}$ of infinitesiaml deformations such that $\mathcal{X}_0 = X$, the morphisms $f_n$ are flat, of finite type and the following compatibility condition holds: for all $n\geq0$, the diagrams
\begin{equation}\label{eq: compatibility of deformation to induced a formal deformation}
\begin{tikzcd}
\mathcal{X}_{n} \arrow[r, hook] \arrow[d, "f_{n}"] & \mathcal{X}_{n+1} \arrow[d, "f_{n+1}"]\\
\Spec R_{n} \arrow[r, hook] & \Spec R_{n+1}
\end{tikzcd}
\end{equation}
are all Cartesian.

Then there exists a (Noetherian) formal scheme $\mathfrak{X}$, flat over $\Spf R$, such that $\mathcal{X}_n\cong\mathfrak{X}\times_{\Spf R}\Spec R_n$, for every natural number $n$.
\end{proposition}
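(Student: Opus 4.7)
The plan is to construct $\mathfrak{X}$ as the colimit $\varinjlim_n \mathcal{X}_n$ in the category of locally Noetherian topologically ringed spaces, using the criterion recalled in Remark \ref{rem: definition of formal scheme as collection of infinitesimal neighbourhoods}, and then to glue the $f_n$ into a single adic morphism $\mathfrak{f}\colon \mathfrak{X}\to \Spf R$ via Remark \ref{rem: morphism of formal schemes as a collection of compatible morphisms of schemes}. The flatness of $\mathfrak{f}$ and the identification $\mathcal{X}_n\cong \mathfrak{X}\times_{\Spf R}\Spec R_n$ will then follow from the properties of adic morphisms stated in the preceding subsection.

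First I would check the three conditions $(i)$--$(iii)$ of Remark \ref{rem: definition of formal scheme as collection of infinitesimal neighbourhoods} for the system $\{\mathcal{X}_n\}_{n\in\mathbb{N}}$. Since each square \cref{eq: compatibility of deformation to induced a formal deformation} is Cartesian and $\Spec R_n\hookrightarrow \Spec R_{n+1}$ is a closed immersion defined by the nilpotent ideal $\mathfrak{m}^{n+1}/\mathfrak{m}^{n+2}$, the morphism $\mathcal{X}_n\hookrightarrow \mathcal{X}_{n+1}$ is a closed immersion that is a homeomorphism on underlying spaces and induces a surjection on structure sheaves, giving $(i)$. For $(ii)$, let $\mathcal{J}_n:=\ker(\mathcal{O}_{\mathcal{X}_n}\to \mathcal{O}_{X})$; by flatness of $f_n$ and the Cartesian property, $\mathcal{J}_n=f_n^{*}(\mathfrak{m}/\mathfrak{m}^{n+1})\cdot\mathcal{O}_{\mathcal{X}_n}$, and similarly $\ker(\mathcal{O}_{\mathcal{X}_n}\to \mathcal{O}_{\mathcal{X}_m})=f_n^{*}(\mathfrak{m}^{m+1}/\mathfrak{m}^{n+1})\cdot\mathcal{O}_{\mathcal{X}_n}$; these agree because $(\mathfrak{m}/\mathfrak{m}^{n+1})^{m+1}=\mathfrak{m}^{m+1}/\mathfrak{m}^{n+1}$ in $R_n$, and flat pullback commutes with taking powers of ideals. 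For $(iii)$, $\mathcal{J}_1=f_1^{*}(\mathfrak{m}/\mathfrak{m}^2)\cdot\mathcal{O}_{\mathcal{X}_1}$ is a quasi-coherent $\mathcal{O}_{X}$-module that is locally the pullback of a finite-dimensional $k$-vector space, hence coherent on $X=\mathcal{X}_0$. Applying the criterion then produces a LNFS $\mathfrak{X}=\varinjlim_n\mathcal{X}_n$ with ideal of definition $\mathfrak{I}=\varprojlim_n\mathcal{J}_n$ satisfying $\mathfrak{I}^{n+1}=\ker(\mathcal{O}_{\mathfrak{X}}\to \mathcal{O}_{\mathcal{X}_n})$.

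Next, the collection $\{f_n\colon \mathcal{X}_n\to \Spec R_n\}$ fits into Cartesian squares as in \cref{eq: adic morphism have Cartesian squares} by hypothesis, so by Remark \ref{rem: morphism of formal schemes as a collection of compatible morphisms of schemes} it induces an adic morphism $\mathfrak{f}\colon \mathfrak{X}=\varinjlim_n\mathcal{X}_n\to \varinjlim_n\Spec R_n=\Spf R$ such that $\mathfrak{f}^{*}\mathfrak{m}\cdot \mathcal{O}_{\mathfrak{X}}=\mathfrak{I}$. Flatness of $\mathfrak{f}$ then follows directly from the proposition stating that, for an adic morphism, flatness is equivalent to flatness of every $f_n$; alternatively one can invoke the local flatness criterion as in \cite[Proposition~1.4.7]{tlr1}. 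The Cartesian identification $\mathcal{X}_n\cong \mathfrak{X}\times_{\Spf R}\Spec R_n$ is built into the adic description: the fibre product on the right is obtained by pulling back the squares of \cref{eq: adic morphism have Cartesian squares} along the closed immersion $\Spec R_n\hookrightarrow \Spf R$, and this recovers $\mathcal{X}_n$ by construction of the colimit.

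The main obstacle I anticipate is condition $(ii)$: one has to be careful that flat pullback along $f_n$ really turns the identity $(\mathfrak{m}/\mathfrak{m}^{n+1})^{m+1}=\mathfrak{m}^{m+1}/\mathfrak{m}^{n+1}$ into the correct statement about the kernels, and that $\mathcal{J}_n$ really equals the claimed pullback ideal rather than just contains it. This is exactly where the hypothesis that $f_n$ is flat (and that every square is Cartesian) is indispensable: without flatness, the pullback of an ideal need not coincide with the ideal of the fibre product, and one would lose the nilpotent ideal structure that certifies $\{\mathcal{X}_n\}$ as a system of infinitesimal thickenings in the sense of Remark \ref{rem: definition of formal scheme as collection of infinitesimal neighbourhoods}.
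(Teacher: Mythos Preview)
Your proposal is correct and follows exactly the template the paper has set up: you verify conditions $(i)$--$(iii)$ of \Cref{rem: definition of formal scheme as collection of infinitesimal neighbourhoods} to produce $\mathfrak{X}$, invoke \Cref{rem: morphism of formal schemes as a collection of compatible morphisms of schemes} to glue the $f_n$ into an adic $\mathfrak{f}$, and deduce flatness and the fibre product identifications from the stated proposition on properties of adic morphisms. The paper itself does not give a proof of this proposition but simply cites \cite[Proposition~21.1]{hartshorne2009deformation}, so there is nothing further to compare.

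One small comment: in your discussion of the ``main obstacle'' you slightly overstate the role of flatness. Condition $(ii)$ follows from the Cartesian hypothesis alone. Since each square is Cartesian, $\mathcal{O}_{\mathcal{X}_m}=\mathcal{O}_{\mathcal{X}_n}\otimes_{R_n}R_m=\mathcal{O}_{\mathcal{X}_n}/(\mathfrak{m}^{m+1}R_n)\mathcal{O}_{\mathcal{X}_n}$, so $\ker(\mathcal{O}_{\mathcal{X}_n}\to\mathcal{O}_{\mathcal{X}_m})=(\mathfrak{m}^{m+1}R_n)\mathcal{O}_{\mathcal{X}_n}$; and for any ring map the identity $(IB)^{m+1}=I^{m+1}B$ holds for extended ideals, giving $\mathcal{J}_n^{m+1}=(\mathfrak{m} R_n)^{m+1}\mathcal{O}_{\mathcal{X}_n}=(\mathfrak{m}^{m+1}R_n)\mathcal{O}_{\mathcal{X}_n}$. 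Likewise $(iii)$ holds because $\mathcal{J}_1$ is the kernel of a surjection of coherent sheaves on the locally Noetherian scheme $\mathcal{X}_1$, and is annihilated by itself, hence is coherent on $X$. Flatness of the $f_n$ is genuinely needed only at the last step, to conclude that the resulting adic morphism $\mathfrak{f}$ is flat. You might also note in passing that $\mathfrak{X}$ is Noetherian (not just locally Noetherian) because $|\mathfrak{X}|=|X|$ and $f_0$ being of finite type over the Artinian ring $R_0$ forces $X$ to be quasi-compact.
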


Concluding, \Cref{rem: formal deformations and thickenings} together with \Cref{prop: formal deformation as compatible collection of deformations} imply that a formal deformation $\mathfrak{f}\colon\mathfrak{X}\to\Spf R$ is uniquely determined by a family of infinitesimal deformations $\{f_n\colon\mathcal{X}_n\to\Spec R_n\}_{n\geq0}$ satisfying the compatibility condition expressed by asking that all diagrams of \cref{eq: compatibility of deformation to induced a formal deformation} must be Cartesian.

Next we explain how to construct a formal deformation starting from a deformation over the spectrum of an algebra essentially of finite type.

\begin{remark}\label{remark: constructing formal deformation from classic deformation}
Let $X$ be a scheme, let $(A,\mathfrak{m})$ be a $k$-algebra essentially of finite type, i.e. a localisation of a $k$-algebra of finite type. Consider a deformation of $X$ over $A$
\[
\begin{tikzcd}
X \arrow[r, hook] \arrow[d] & \mathcal{X} \arrow[d, "f"]\\
\Spec k \arrow[r, hook] & \Spec A.
\end{tikzcd}
\]
Let $\hat{A}$ be the formal completion of $A$ at $\mathfrak{m}$; for every $n\geq0$, define $A_n$ to be the quotient ring $A/\mathfrak{m}^{n+1}$ and note that we have canonical isomorphisms $\hat{A}/\mathfrak{m}^{n+1}\hat{A}\cong A_n$ (see \cite[Theorem~7.1~b)]{eisenbud1995commutative}). Now, for every natural number $n$, consider the following diagram of solid arrows
\[
\begin{tikzcd}
\mathcal{X}_n \arrow[d, dashed, "f_n"] \arrow[r, hook, dashed] & \mathcal{X} \arrow[d, "f"]\\
\Spec A_n \arrow[r, hook] & \Spec A
\end{tikzcd}
\]
and complete it to a Cartesian one. For every non-negative integer $n$, we have that $f_n\colon\mathcal{X}_n\to\Spec A_n$ is a deformation of $X$ and all these deformations satisfy the compatibility condition of \cref{eq: compatibility of deformation to induced a formal deformation}. By applying \Cref{prop: formal deformation as compatible collection of deformations} we have constructed a formal deformation $\mathfrak{f}\colon\mathfrak{X}\to\Spf\hat{A}$.
\end{remark}

We call the formal deformation $\mathfrak{f}$ constructed in \cref{remark: constructing formal deformation from classic deformation} the \emph{formal deformation associated} to $f$.

%%%%%%%%%%%%%%%%%%%%%%
\subsection{Relations among different types of deformations}
%%%%%%%%%%%%%%%%%%%%%%
It is now a good time to exploit the relationships among the deformations we will find in this article. We start by recalling a few definitions taken from \cite{sernesi2007deformations}.

\begin{definition}
Let $X$ be a proper scheme over an algebraically closed field $k$ and consider the following Cartesian diagram of schemes
\begin{equation}\label{eq: deformation}
\begin{tikzcd}
X \arrow[d] \arrow[r, hook] & \mathcal{X} \arrow[d, "f"]\\
\Spec k \arrow[r, hook, "b"'] & B
\end{tikzcd}
\end{equation}
with $f$ flat, proper and surjective morphism, $b\in B$ a closed point inducing the closed embedding $b\colon\Spec k\hookrightarrow B$. We say \cref{eq: deformation} is
\begin{itemize}
\item[(a)] a \textit{family of deformations} of $X$ iff $B$ is a connected $k$-scheme;
\item[(b)] an \textit{algebraic deformation} of $X$ iff $B$ is a $k$-scheme (essentially) of finite type;
\item[(c)] a \textit{local deformation} of $X$ iff $B$ is the affine spectrum of a local Noetherian $k$-algebra with residue field $k$;
\item[(d)] an \textit{infinitesimal deformation} of $X$ iff $B=\Spec A$ with $A$ a local Artinian $k$-algebra with residue field $k$;
\item[(e)] a \textit{first-order deformation} of $X$ iff $B=\Spec k[\varepsilon]/(\varepsilon^2)$.
\item[(f)] We say that a Cartesian diagram
\begin{equation*}
\begin{tikzcd}
X \arrow[r, hook] \arrow[d] & \mathfrak{X} \arrow[d, "\mathfrak{f}"]\\
\Spec k \arrow[r, hook] & \Spf A
\end{tikzcd}
\end{equation*}
of formal schemes is a \textit{formal deformation} iff $A$ is a local complete Noetherian $k$-algebra with residue field $k$ and $\mathfrak{f}$ is a flat proper morphism of finite type of formal schemes. As we have shown, this is equivalent to give a collection of infinitesimal deformations $\{f_n\colon X_n\to B_n\}_{n\in\mathbb{N}}$, where $B_n:=\Spec A/\mathfrak{m}_{A}^{n+1}$, such that the following diagram is Cartesian
\[
\begin{tikzcd}
X_n\arrow[d, "f_n"] \arrow[r, hook] & X_{n+1}\arrow[d, "f_{n+1}"]\\
B_n\arrow[r, hook] & B_{n+1}.
\end{tikzcd}
\]
\end{itemize}
\end{definition}

We remark that in cases (c), (d), (e) and (f) the underlying topological spaces of $X$ and $\mathcal{X}$ (respectively $\mathfrak{X}$) are the same and what is changing is the scheme (respectively formal scheme) structure. In particular it follows that the properness condition of $X$ is equivalent to $f$ (respectively $\mathfrak{f}$) being proper.

In the same hypotheses and notations used in the previous definition, we have the following properties:
\begin{enumerate}
\item any algebraic deformation induces a local one by taking the closed point $b\in B$ and considering the pull-back of $f\colon\mathcal{X}\to B$ along the closed embedding $\Spec\mathcal{O}_{B,b}\hookrightarrow B$; \item any infinitesimal deformation is in particular a local deformation since every Artinian ring is Noetherian too;
\item any first order deformation is an infinitesimal one because the ring of dual numbers $k[\varepsilon]/(\varepsilon^2)$ is an example of Artinian ring;
\item since, by definition, a formal deformation is a (numerable) collection of infinitesimal deformations, we get that any formal deformation induces countably many infinitesimal deformation;
\item on the other hand, any local deformation induces a formal one. To see this, let $\mathfrak{m}_{b}$ denotes the maximal ideal of the local ring $\mathcal{O}_{B,b}$ and, for any $n\in\mathbb{N}$, consider the following diagram made by Cartesian faces
\[
\begin{tikzcd}
& X \arrow[dd] \arrow[dl, hook] \arrow[dr, hook] &\\
\mathcal{X}_n \arrow[dd, "\pi_n"'] \arrow[rr, hook] & & \mathcal{X}\arrow[dd, "\pi"]\\
& \Spec k \arrow[dl, hook] \arrow[dr, hook] &\\
\Spec\frac{\mathcal{O}_{B,b}}{\mathfrak{m}_{b}^{n+1}}\arrow[rr, hook] & & \Spec\mathcal{O}_{B,b}.
\end{tikzcd}
\]
Doing this for every $n\in\mathbb{N}$ we get a collection of compatible deformations of $X$, which defines a formal deformation.
\end{enumerate}

In this work there are more steps to be aware of. To explain them, let us consider the following diagram of deformations of a $k$-scheme $X$ (this simply means that each vertical arrow is a deformation of $X$):
\[
\begin{tikzcd}
Y\arrow[d, "l"']\arrow[r, hook] & \mathfrak{X} \arrow[d, "\mathfrak{f}"']\arrow[r] &\mathcal{T} \arrow[d, "f"'] \arrow[rrdd, bend left = 10] &&&\\
\Spec C \arrow[r, hook] & \Spf A \arrow[r] & \Spec A \arrow[rdd, dashed] \arrow[rrdd, bend left = 10] &&&\\
&&& \arrow[from=luu, crossing over, dashed] \mathcal{Z} \arrow[d, "g"] \arrow[r, crossing over] &\mathcal{Y} \arrow[d, "h"] \arrow[r, hook] & \mathcal{X} \arrow[d, "w"]\\
&&& \Spec D \arrow[r] &\Spec E \arrow[r, hook] & B\\
\end{tikzcd}
\]
where $B$ is a $k$-scheme of finite type, $E$ is a $k$-algebra (essentially) of finite type (essentially of finite type means that it is the localization of a $k$-algebra of finite type), $D$ is a $k$-algebra which is also a DVR, $A$ is a local complete Noetherian $k$-algebra and $C$ is a local Artinian $k$-algebra.

We will say that a morphism defining a deformation is induced by another if the second deformation is isomorphic (as deformations, see \cite[page $21$]{sernesi2007deformations}) to the pull-back of the first along the closed embedding on the base. We point out that, in general, there is not a natural arrow from $f$ to $g$, hence the dashed arrow, unless $A$ is taken to be the completion of the DVR $D$ along its maximal ideal. Now, $h$ is induced by $w$ since the closed embedding $b\colon\Spec k\to B$ factors through the spectrum of a $k$-algebra (essentially) of finite type. Passing from $h$ to $f$ can be done as follows: since $E$ is the localization of a $k$-algebra of finite type, it has a maximal ideal $\mathfrak{m}_{E}$ and we can complete $E$ along such maximal ideal. Similarly, from $g$ we can deduce $f$ by considering the completion of the DVR along the powers of its maximal ideal. $f$ induces $\mathfrak{f}$ since the formal spectrum has a natural map to the affine spectrum. Since the quotient of a DVR by powers of its maximal ideal is an Artinian ring, it follows that $g$ induces $l$. Similarly, $f$ induces $l$. Lastly, the formal deformation $\mathfrak{f}$ induces a infinitesimal deformation $l$ since the quotient of a local complete Noetherian ring by a power of the maximal ideal is an Artinian ring.

%%%%%%%%%%%%%%%%%%%%
\subsection{Reversing some constructions on deformation}
%%%%%%%%%%%%%%%%%%%%
Reversing some constructions above is usually a hard problem and without further hypotheses on the scheme $X$ is a very hard one. For example, passing from a formal deformation of a $k$-scheme $X$ to a deformation of the same scheme over an affine spectrum of a $k$-algebra (essentially) of finite type means to find ``an algebraisation of the formal deformation''. By an algebraisable formal deformation we mean the following:

\begin{definition}\label{def: algebraic deformation}
Let $X$ be a scheme and let $(A,\mathfrak{m})$ be a complete local Noetherian ring. A formal deformation $\mathfrak{f}\colon\mathfrak{X}\to\Spf A$ is called \emph{algebraisable} if there exist
\begin{itemize}
\item a $k$-algebra essentially of finite type $(R, \mathfrak{n})$,
\item a deformation $g\colon\mathcal{Y}\to\Spec R$ of $X$,
\item an isomorphism $A\cong\hat{R}_{\mathfrak{n}}$,
\item an isomorphism between $\mathfrak{f}$ and the formal deformation $\mathfrak{g}\colon\mathfrak{Y}\to\Spf\hat{R}$ associated to $g$.
\end{itemize}
The deformation $g\colon\mathcal{Y}\to\Spec A$ is called an \emph{algebraisation} of $\mathfrak{f}$.
\end{definition}

The existence of an algebraisation is a very difficult problem. To solve it, Artin introduced in \cite{artin1969algebraization} a weaker condition than algebraisation, ``effectivity of a formal deformation'', which we introduce next.

\begin{definition}\label{def: effective deformation}
Let $X$ be a scheme and let $(A,\mathfrak{m})$ be a complete local Noetherian ring. A formal deformation $\mathfrak{f}\colon\mathfrak{X}\to\Spf A$ is called \emph{effective} if there exists a deformation
\[
\begin{tikzcd}
X \arrow[r, hook] \arrow[d] & \mathcal{X} \arrow[d, "f"]\\
\Spec k \arrow[r, hook] & \Spec A
\end{tikzcd}
\]
with $f$ a flat morphism of finite type such that $\mathfrak{X}=\hat{\mathcal{X}}_{/X}$.
\end{definition}

The idea of Artin was to split the problem of algebraisation in two subproblems:
\begin{itemize}
\item[(i)] to prove the effectivity of the formal deformation: in other words, using notations above, find conditions on $X$ to extend the formal deformation $\mathfrak{f}$ to the deformation $f$;
\item[(ii)] to find hypotheses on the formal deformation to extend it to the spectrum of an (essentially) of finite type $k$-algebra.
\end{itemize}

For step (ii), a sufficient criterion was given by Artin in \cite{artin1969algebraization} and goes by the name of Artin algebraisation theorem, see \cite[Theorem 2.5.14]{sernesi2007deformations}. In there, under the hypotheses that the central fibre $X$ is a projective scheme, Artin showed that if the formal deformation is versal, see \cite[Definition~2.2.6]{sernesi2007deformations}, and effective then it is algebraisable.

However, step (i) above can not be always achieved: for instance, the universal formal deformation of a $K3$ surface is not effective, see \cite[Example 2.5.12]{sernesi2007deformations}.

Recall from \Cref{def: algebraisable formal scheme} that a LNFS $\mathfrak{X}$ is called algebraizable if there are a scheme $Y$ and a closed subscheme $X$ of $Y$ such that $\mathfrak{X}=\hat{Y}_{/X}$. It also makes sense to define algebraisable schemes in the relative setting. For this, suppose we have a formal scheme $\mathfrak{X}$ over the affine formal scheme $\Spf A$, with $A$ a local adic Noetherian $k$-algebra $A$ with residue field $k$.

\begin{definition}
We say that $\mathfrak{X}$ is \emph{algebraisable} over $\Spf A$ if there exists a scheme $\mathcal{X}$ over $\Spec A$ such that $\mathfrak{X}$ is isomorphic to the formal completion $\widehat{\mathcal{X}}_{/X_{0}}$, where $X_{0}:=\mathfrak{X}\times_{\Spf A}\Spec\frac{A}{I}$.

When the formal scheme $\mathfrak{X}$ is proper over $\Spf A$, we say that it is algebraisable if there exists a proper scheme $\mathcal{X}$ over $\Spec A$ such that $\mathfrak{X}\cong\widehat{\mathcal{X}}_{/X_0}$, where again $X_{0}:=\mathfrak{X}\times_{\Spf A}\Spec\frac{A}{I}$.
\end{definition}

Note that the ring $A$ is left fixed but we are changing the locally ringed space structure induced by it.

At this point one wonders if there are conditions to ensure algebraisability of a formal scheme and, in the case an algebraisation exists, how unique it is. We first address the latter problem. Assume we have found an algebraisation of a locally Noetherian formal scheme; then in general it is not unique and a counterexample is given in \cite[Remark~8.4.8.]{FGAIll}. However, in \cite[Corollary~8.4.7.]{FGAIll}, Illusie proved that if we restrict to proper formal schemes then an algebraisation is unique up to a unique isomorphism inducing the identity on $\mathfrak{X}$.

\begin{theorem}[{\cite[(\textbf{3}.5.4.5)]{MR0163910} or \cite[Theorem~8.4.10]{FGAIll}}]\label{Gro: algebrization theorem}
Let $A$ be a Noetherian $I$-adic ring, let $T=\Spec A$, $\mathfrak{T}:=\Spf A$, let $\mathfrak{f}\colon\mathfrak{X}\to\mathfrak{T}$ be a proper morphism of formal schemes. For any $l\in\mathbb{N}$, let $T_l:=\Spec (A/I^{l+1})$, $X_l:=\mathfrak{X}\times_{\mathfrak{T}}T_l$. Suppose that there is an invertible formal sheaf $\mathfrak{L}$ such that $\mathfrak{L}_0:=\mathfrak{L}/I\mathfrak{L}$ is an ample invertible sheaf on $X_0$. Then $\mathfrak{X}$ is algebraisable. 
Furthermore, if $\mathcal{X}$ is its algebraisation, which is proper over $T$ since $\mathfrak{f}$ was supposed proper, then there exists a unique ample invertible sheaf $\mathcal{M}$ on $\mathcal{X}$ such that $\mathfrak{L}\cong\widehat{\mathcal{M}}_{/X_0}$.
\end{theorem}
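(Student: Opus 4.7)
\emph{Proof plan.} The plan is to realise $\mathfrak{X}$ as a closed formal subscheme of a formal projective space $\widehat{\mathbb{P}}^N_T$ and then algebraise via Grothendieck's existence theorem (GFGA) applied on $\mathbb{P}^N_T$. The first input is that ampleness propagates across nilpotent thickenings (EGA II.4.5.13): since each $X_l\hookrightarrow X_{l+1}$ is a homeomorphism on underlying spaces with nilpotent kernel ideal, ampleness of $\mathfrak{L}_0$ on $X_0$ forces $\mathfrak{L}_l:=\mathfrak{L}/I^{l+1}\mathfrak{L}$ to be ample on $X_l$ for every $l\in\mathbb{N}$, and in particular each $X_l\to T_l$ is projective.

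Next I would fix $d\gg 0$ so that $\mathfrak{L}_0^{\otimes d}$ is very ample on $X_0/T_0$ and $\coho^i(X_0,\mathfrak{L}_0^{\otimes dm})=0$ for $i\geq 1$ and $m\geq 1$. Combining the short exact sequences
\[
0\to (I^l/I^{l+1})\otimes_{\mathcal{O}_{X_0}}\mathfrak{L}_0^{\otimes dm}\to\mathfrak{L}_l^{\otimes dm}\to\mathfrak{L}_{l-1}^{\otimes dm}\to 0
\]
with Serre vanishing on $X_0$, induction on $l$ shows that each $\mathfrak{L}_l^{\otimes d}$ is very ample on $X_l/T_l$ and that the restriction maps $\coho^0(X_{l+1},\mathfrak{L}_{l+1}^{\otimes d})\twoheadrightarrow\coho^0(X_l,\mathfrak{L}_l^{\otimes d})$ are surjective. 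The resulting compatible family of closed embeddings $X_l\hookrightarrow\mathbb{P}^N_{T_l}$ fits into Cartesian squares with $T_l\hookrightarrow T_{l+1}$, so by \Cref{rem: morphism of formal schemes as a collection of compatible morphisms of schemes} it assembles into a closed immersion $\iota\colon\mathfrak{X}\hookrightarrow\widehat{\mathbb{P}}^N_T$ of adic formal schemes.

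Now I apply GFGA to the coherent formal ideal sheaf $\mathfrak{J}\subset\mathcal{O}_{\widehat{\mathbb{P}}^N_T}$ of $\iota$: the completion functor $\coh(\mathbb{P}^N_T)\to\coh(\widehat{\mathbb{P}}^N_T)$ is an equivalence of categories, so $\mathfrak{J}$ comes from a unique coherent ideal $\mathcal{J}\subset\mathcal{O}_{\mathbb{P}^N_T}$, and $\mathcal{X}:=V(\mathcal{J})$ is a closed subscheme of $\mathbb{P}^N_T$, proper over $T$, with $\widehat{\mathcal{X}}_{/X_0}\cong\mathfrak{X}$, establishing algebraisability. Applying GFGA once more to $\mathfrak{L}$, viewed as a coherent formal sheaf on $\widehat{\mathcal{X}}_{/X_0}$, yields a coherent $\mathcal{M}$ on $\mathcal{X}$ with $\widehat{\mathcal{M}}_{/X_0}\cong\mathfrak{L}$. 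The two coherent sheaves $\mathcal{M}^{\otimes d}$ and $\mathcal{O}_{\mathbb{P}^N_T}(1)|_{\mathcal{X}}$ on $\mathcal{X}$ have the same formal completion $\mathfrak{L}^{\otimes d}$, so by GFGA they are isomorphic; the right-hand side is invertible and $T$-ample, and a stalk-wise Nakayama argument (a finitely generated module over a Noetherian local ring whose $d$-th tensor power is free of rank one is itself free of rank one) forces $\mathcal{M}$ to be invertible and hence ample. Uniqueness of $\mathcal{M}$ is the full-faithfulness half of GFGA on the proper scheme $\mathcal{X}$, already recalled in the paragraph preceding the theorem.

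The genuinely non-formal input, and hence the main obstacle in spirit, is Grothendieck's existence theorem itself for coherent formal sheaves on $\widehat{\mathbb{P}}^N_T$; its proof proceeds by d\'evissage to the twisting sheaves $\mathcal{O}(n)^{\Delta}$, for which the required comparison of cohomologies can be carried out by direct computation. Everything else -- propagation of ampleness across thickenings, the inductive construction of the projective embedding, and the descent of invertibility and ampleness from $\mathcal{M}^{\otimes d}$ to $\mathcal{M}$ -- is routine given the colimit description of adic morphisms recorded in \Cref{rem: morphism of formal schemes as a collection of compatible morphisms of schemes}.
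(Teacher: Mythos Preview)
The paper does not supply a proof of this theorem: it is quoted as a black box from EGA III (\textbf{3}.5.4.5) and from Illusie's exposition \cite[Theorem~8.4.10]{FGAIll}, followed only by a few remarks on its consequences. So there is no ``paper's own proof'' to compare against; what one can say is that your sketch follows precisely the classical route taken in those references.

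Your argument is correct in outline and matches the standard proof: propagate ampleness through the nilpotent thickenings $X_l$, pick a uniform $d$ via Serre vanishing on $X_0$ and the short exact sequences $0\to (I^l/I^{l+1})\otimes\mathfrak{L}_0^{\otimes dm}\to\mathfrak{L}_l^{\otimes dm}\to\mathfrak{L}_{l-1}^{\otimes dm}\to 0$, lift a basis of $\coho^0(X_0,\mathfrak{L}_0^{\otimes d})$ compatibly through the tower to obtain a closed immersion $\mathfrak{X}\hookrightarrow\widehat{\mathbb{P}}^N_T$, and then invoke GFGA on $\mathbb{P}^N_T$ to algebraise first the ideal sheaf and then $\mathfrak{L}$. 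The only point worth tightening is the passage from ``the restriction maps on $\coho^0$ are surjective'' to ``compatible closed embeddings into the \emph{same} $\mathbb{P}^N$'': one should say explicitly that a generating set $s_0,\dots,s_N$ of $\coho^0(X_0,\mathfrak{L}_0^{\otimes d})$ lifts, level by level, to a compatible system (Mittag--Leffler), and that by Nakayama these lifts still generate $\coho^0(X_l,\mathfrak{L}_l^{\otimes d})$ over $A/I^{l+1}$, so $N$ is constant. With that made explicit, the rest (including your Nakayama argument that $\mathcal{M}_x^{\otimes d}\cong\mathcal{O}_{\mathcal{X},x}$ forces $\mathcal{M}_x$ free of rank one) is routine.
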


We present now a few remarks on the above theorem. The first one is that the hypotheses $\mathfrak{f}$ proper, which is equivalent to $X_0$ proper over $k$, and $\mathfrak{L}_{0}$ ample on $X_{0}$ together imply that $X_0$ is projective over $T_0$. Furthermore, since $\mathfrak{f}$ was proper, the algebraisation of $\mathfrak{X}$ is proper over $T$ by the above definition; in particular it is unique up to a unique isomorphism inducing the identinty on the formal scheme. We also remark that the existence of an ample invertible sheaf $\mathcal{M}$ on $\mathcal{X}$ together with the fact that $\mathcal{X}$ is proper, imply that $\mathcal{X}$ is projective over $T$.

A corollary of the above theorem, is the classical result by Grothendieck:

\begin{theorem}[{\cite[Th{\'e}or{\`e}me~4]{GAGF1960}}]\label{theorem: effectivisation of deformation II}
Let $A$ be a local adic Noetherian ring with residue field $k$, let $\mathfrak{X}$ be a proper formal scheme over $\Spf A$ and suppose that
\begin{enumerate}
\item the local rings of $\mathcal{O}_{\mathfrak{X}}$ are flat $A$-modules (in other words $\mathfrak{f}$ is flat);
\item $X_0:=\mathfrak{X}\otimes_{A}k$ satisfies $\coho^2(X_0,\mathcal{O}_{X_0})=0$;
\item $X_0$ is projective.
\end{enumerate}
Then $\mathfrak{X}$ is algebraisable and its algebraisation is projective over $\Spec A$.
\end{theorem}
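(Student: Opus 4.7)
The plan is to reduce the statement to the Grothendieck algebraisation theorem (\Cref{Gro: algebrization theorem}) by producing an invertible formal sheaf $\mathfrak{L}$ on $\mathfrak{X}$ whose reduction on $X_0$ is ample. Since $X_0$ is projective by (3), we have an ample line bundle $\mathscr{L}_0$ on $X_0$; the core task is to extend it to a compatible system $\{\mathscr{L}_n\}$ on the infinitesimal thickenings $X_n := (\mathfrak{X}, \mathcal{O}_{\mathfrak{X}}/\mathfrak{I}^{n+1})$, so that \Cref{teo: locally free sheaves on each nilpotent subscheme induce a locally free sheaf on the completion} produces the desired $\mathfrak{L} := \varprojlim_n \mathscr{L}_n$.

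The step-by-step construction is via the standard deformation-of-line-bundles argument. For each $n \geq 0$, the ideal sheaf $\mathfrak{I}^{n+1}/\mathfrak{I}^{n+2}$ is a square-zero ideal in $\mathcal{O}_{X_{n+1}}$, so the exponential-type short exact sequence of abelian sheaves
\[
0 \longrightarrow \mathfrak{I}^{n+1}/\mathfrak{I}^{n+2} \longrightarrow \mathcal{O}_{X_{n+1}}^{*} \longrightarrow \mathcal{O}_{X_n}^{*} \longrightarrow 1
\]
is exact, and the long exact sequence in cohomology places the obstruction to lifting $\mathscr{L}_n \in \mathrm{Pic}(X_n)$ to $\mathrm{Pic}(X_{n+1})$ inside $\coho^2(X_0, \mathfrak{I}^{n+1}/\mathfrak{I}^{n+2})$. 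Because $\mathfrak{f}$ is adic and flat over $A$, we have $\mathfrak{I} = I \cdot \mathcal{O}_{\mathfrak{X}}$, and flatness yields $\mathfrak{I}^{n+1}/\mathfrak{I}^{n+2} \cong (I^{n+1}/I^{n+2}) \otimes_k \mathcal{O}_{X_0}$. Since $I^{n+1}/I^{n+2}$ is a finite-dimensional $k$-vector space, this gives
\[
\coho^2\!\bigl(X_0,\, \mathfrak{I}^{n+1}/\mathfrak{I}^{n+2}\bigr) \;\cong\; (I^{n+1}/I^{n+2}) \otimes_k \coho^2(X_0, \mathcal{O}_{X_0}) \;=\; 0
\]
by hypothesis (2). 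Thus the lifts exist inductively, each chosen to restrict to the previous one, producing a compatible system $\{\mathscr{L}_n\}$.

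Invoking \Cref{teo: locally free sheaves on each nilpotent subscheme induce a locally free sheaf on the completion} then yields an invertible formal sheaf $\mathfrak{L}$ on $\mathfrak{X}$ with $\mathfrak{L}_0 \cong \mathscr{L}_0$ ample on $X_0$. Now \Cref{Gro: algebrization theorem} applies directly: $\mathfrak{X}$ admits an algebraisation $\mathcal{X}$, proper over $\Spec A$ (since $\mathfrak{f}$ is proper), together with a unique ample invertible sheaf $\mathcal{M}$ on $\mathcal{X}$ such that $\mathfrak{L} \cong \widehat{\mathcal{M}}_{/X_0}$. Properness plus the existence of an ample line bundle gives projectivity of $\mathcal{X}$ over $\Spec A$, completing the proof.

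The main obstacle is the deformation-theoretic lifting of $\mathscr{L}_0$; everything else is bookkeeping invoking results already available in the excerpt. The two ingredients that make the lifting work are (a) the flatness of $\mathfrak{f}$, which lets us identify the graded pieces $\mathfrak{I}^{n+1}/\mathfrak{I}^{n+2}$ as sums of copies of $\mathcal{O}_{X_0}$ indexed by a basis of $I^{n+1}/I^{n+2}$, and (b) the vanishing $\coho^2(X_0,\mathcal{O}_{X_0}) = 0$, which annihilates the obstructions uniformly for all $n$.
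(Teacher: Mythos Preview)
Your proposal is correct and matches the approach implicit in the paper's presentation: the paper states this result as a corollary of \Cref{Gro: algebrization theorem} without spelling out the argument, and your proof supplies exactly the expected reduction---lift the ample line bundle on $X_0$ through the thickenings using the square-zero exponential sequence, flatness to identify $\mathfrak{I}^{n+1}/\mathfrak{I}^{n+2}\cong (I^{n+1}/I^{n+2})\otimes_k\mathcal{O}_{X_0}$, and the vanishing of $\coho^2(X_0,\mathcal{O}_{X_0})$ to kill the obstructions---then invoke \Cref{teo: locally free sheaves on each nilpotent subscheme induce a locally free sheaf on the completion} and \Cref{Gro: algebrization theorem}. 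This is precisely Grothendieck's original argument, and nothing further is needed.
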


We can interpret \Cref{theorem: effectivisation of deformation II} as a theorem on deformations; it says that, in the same notations as above, if the structure morphism $\mathfrak{f}\colon\mathfrak{X}\to\Spf A$ is proper and a formal deformation of a projective scheme $X_0$ with $\coho^2(X_0, \mathcal{O}_{X_0})=0$, then the formal deformation $\mathfrak{f}$ is effective. Therefore \Cref{theorem: effectivisation of deformation II} gives sufficient conditions to achieve step (i) above.

The difference between the algebraisation of a formal scheme over a formal affine scheme, say $\Spf A$ with $A$ as above, and the algebraisation of a formal deformation over $\Spf A$ with proper central fibre lies in the base affine scheme: in the algebraisation of the formal scheme, the $k$-algebra is required to be complete, while in the algebraisation of the formal deformation the $k$-algebra is required to be (essentially) of finite type.

Examples of formal schemes that are not algebraizable (resp. formal deformations that are not effective) are $K3$ surfaces and Abelian varieties, see \cite[Example~2.5.12 ]{sernesi2007deformations} or \cite[Remark~8.5.24(b) and remark~8.5.28(a)]{FGAIll}. Even though in both cases we are able to extend (resp. deform) the scheme at all infinitesimal neighbourhoods, there are ample line bundles that do not lift to the whole formal scheme. This is the consequence of the fact that the space of all deformation of pairs Abelian variety together with an ample line bundle on it (or $K3$ surface together with an ample line bundle) is a proper subspace of the space of all deformations of Abelian varieties (or of $K3$ surfaces).

%%%%%%%%%%%%%%%%%%%%%%%
\subsection{Motivating formal smoothness}
%%%%%%%%%%%%%%%%%%%%%%%
In the next part we introduce two definitions of smoothing that will be relevant in the following. In particular we motivate why the definition of formal smoothing given by Tziolas in \cite{tziolas2010smoothings} is the most natural and, in some sense, the only one possible in our framework.

This section was motivated by the following result of Tziolas, which is key to our argument.

\begin{proposition}[{\cite[Proposition~11.8]{tziolas2010smoothings}}]\label{Tziolas: smoothing over dvr is the same as formal smoothing}
Let $Y$ be a proper, equidimensional scheme and let $A$ be a $k$-algebra which is a DVR. Let $g\colon\mathcal{Y}\to\Spec A$ also be a deformation of $Y$ over $A$ and let $\mathfrak{g}\colon\mathfrak{Y}\to\Spf\hat{A}$ be the associated formal deformation. Then $g$ is a smoothing if and only if $\mathfrak{g}$ is a formal smoothing.
\end{proposition}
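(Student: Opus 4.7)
The strategy is to show that both the geometric smoothing condition on $g$ and the formal smoothing condition on $\mathfrak{g}$ are equivalent to the same local statement on $\mathcal{Y}$: there exists $N \in \mathbb{N}$ with $\pi^{N} \in J_{y}$ for every $y \in Y$, where $\pi$ is a uniformiser of $A$ and $J := \Fitt_{d}(\Omega^{1}_{\mathcal{Y}/\Spec A})$ with $d := \dim Y$. The essential input is that, because $g$ is flat of pure relative dimension $d$, the non-smooth locus of $g$ coincides with $V(J)$.

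On the geometric side, $g$ being a smoothing is equivalent to $V(J) \cap \mathcal{Y}_{\eta} = \emptyset$, i.e., $V(J) \subset V(\pi) = Y$. Using quasi-compactness of $\mathcal{Y}$ (from properness over $\Spec A$), this radical containment yields a uniform $N$ with $\pi^{N}\mathcal{O}_{\mathcal{Y}} \subset J$, and in particular $\pi^{N} \in J_{y}$ for every $y \in Y$. Conversely, given such an $N$, for any $y \in \mathcal{Y}_{\eta}$ I would use properness of $g$ to produce a specialisation $y \leadsto y'$ with $y' \in Y$: the closure of $\{y\}$ in $\mathcal{Y}$ has closed image in $\Spec A$ containing $\eta$, hence equal to $\Spec A$, so it meets $Y$. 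Then $\mathcal{O}_{\mathcal{Y}, y}$ is a localisation of $\mathcal{O}_{\mathcal{Y}, y'}$ in which $\pi$ becomes a unit, so $\pi^{N} \in J_{y'}$ forces $J_{y} = \mathcal{O}_{\mathcal{Y}, y}$, making $g$ smooth at $y$.

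On the formal side, compatibility of Fitting ideals with flat base change, together with the identification of $\Omega^{1}_{\mathfrak{Y}/\Spf\hat{A}}$ with the completion of $\Omega^{1}_{\mathcal{Y}/\Spec A}$ along $Y$, gives $\Fitt_{d}(\Omega^{1}_{\mathfrak{Y}/\Spf\hat{A}}) = J^{\wedge}$. Since $\mathfrak{I} = \pi\mathcal{O}_{\mathfrak{Y}}$, the formal smoothing condition becomes $\pi^{N}\mathcal{O}_{\mathfrak{Y}} \subset J^{\wedge}$, which stalk-wise at each $y \in Y$ translates to $\pi^{N} \in \widehat{J_{y}}$ inside $\widehat{\mathcal{O}_{\mathcal{Y}, y}}$. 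Krull's intersection theorem, applied to the finitely generated $\mathcal{O}_{\mathcal{Y},y}$-module $\mathcal{O}_{\mathcal{Y},y}/J_{y}$ with $\pi$ in the maximal ideal, yields an injection $\mathcal{O}_{\mathcal{Y},y}/J_{y} \hookrightarrow \widehat{\mathcal{O}_{\mathcal{Y},y}/J_{y}}$, so $\pi^{N} \in \widehat{J_{y}}$ if and only if $\pi^{N} \in J_{y}$. Combining with the geometric side closes the loop.

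I expect the main obstacle to be the clean identification $\Fitt_{d}(\Omega^{1}_{\mathfrak{Y}/\Spf\hat{A}}) = J^{\wedge}$, which requires the compatibility of Fitting ideals with the formal completion functor; once that is in place, the rest is the stalk-wise Krull argument on $Y$ combined with the specialisation argument via properness on $\mathcal{Y}_{\eta}$.
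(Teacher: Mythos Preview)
The paper does not give a self-contained proof of this proposition; it is cited from Tziolas. What the paper does provide is the discussion in the subsection ``Motivating formal smoothness'', which establishes the chain of equivalences (a)--(g) showing that a one-parameter family $\pi\colon\mathcal{X}\to B$ is a smoothing if and only if some power of the ideal of the closed point is contained in $\Fitt_{r}(\Omega^{1}_{\widetilde{\pi}})$. That argument passes through $\Spec\widehat{R}$ (the completion viewed as an ordinary scheme) rather than $\Spf\widehat{R}$, using that $\Spec\widehat{R}\to\Spec R$ is a homeomorphism and that Fitting ideals are compatible with base change; the final step to the formal scheme is left as the remark that the condition obtained ``is independent of the ideal of definition''.

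Your approach is correct and in fact more complete on exactly the point the paper glosses over: you work directly with the formal scheme $\mathfrak{Y}$, identify $\Fitt_{d}(\Omega^{1}_{\mathfrak{Y}/\Spf\hat{A}})$ with the $\pi$-adic completion $J^{\wedge}$ of $J=\Fitt_{d}(\Omega^{1}_{\mathcal{Y}/\Spec A})$, and then use Krull's intersection theorem on $\mathcal{O}_{\mathcal{Y},y}/J_{y}$ to pass between $\pi^{N}\in J_{y}$ and $\pi^{N}\in\widehat{J_{y}}$. The specialisation argument via properness to handle points of the generic fibre is also a cleaner replacement for the paper's set-theoretic discussion of closed subscheme structures on $\Spec R$. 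One small point worth making explicit in your write-up: the relevant completion at $y\in Y$ is the $(\pi)$-adic one (since $\mathfrak{I}=\pi\mathcal{O}_{\mathfrak{Y}}$), not the $\mathfrak{m}_{y}$-adic one; Krull still applies because $(\pi)\subset\mathfrak{m}_{y}$ and $\mathcal{O}_{\mathcal{Y},y}$ is Noetherian local, so $\bigcap_{n}\pi^{n}(\mathcal{O}_{\mathcal{Y},y}/J_{y})\subset\bigcap_{n}\mathfrak{m}_{y}^{n}(\mathcal{O}_{\mathcal{Y},y}/J_{y})=0$.
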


The importance of the above result is that it gives a criterion to recognise if a one-parameter deformation is a smoothing by checking if the associated formal deformation is a formal smoothing. Let us first introduce the two definitions of smoothings.

\begin{definition}\label{def: smoothing}
Let $Y$ be a proper, equidimensional scheme and let $A$ be a $k$-algebra which is a DVR. We say that a deformation $g\colon\mathcal{Y}\to\Spec A$ of $Y$ over $A$ is a \emph{smoothing} if the generic fibre $\mathcal{Y}_{\text{gen}}:=\mathcal{Y}\times_{\Spec A}\Spec\kappa(A)$ is smooth.
\end{definition}

Following \cite{tziolas2010smoothings}, we now recall the notion of formal smoothing. Such definition requires the knowledge of the sheaf of Fitting ideals, which can be found either in \cite[Chapter~20.2]{eisenbud1995commutative} or in \cite[\href{https://stacks.math.columbia.edu/tag/0C3C}{TAG 0C3C}]{stacks-project}. We will not introduce it but we will just give an interpretation of what the Fitting ideal is. Let $\mathfrak{X}$ be a formal scheme, let $\mathfrak{F}$ be a formal coherent sheaf and let $a\in\mathbb{N}$; we denote by $\Fitt_a(\mathfrak{F})$ the $a^{\text{th}}$ Fitting ideal sheaf of $\mathfrak{F}$. This ideal measures the obstructions for the sheaf $\mathfrak{F}$ to be locally generated by $a$ elements. For example, $\mathfrak{F}$ is locally generated by $a$ elements if and only if $\Fitt_a(\mathfrak{F})=\mathcal{O}_{\mathfrak{X}}$.

\begin{definition}\label{def: formal smoothing}
Let $X$ be a proper, equidimensional scheme. A formal deformation of $X$ over $\mathfrak{S}$
\[
\begin{tikzcd}
X \arrow[d] \arrow[r, hook] & \mathfrak{X} \arrow[d, "\pi"]\\
\Spf k \arrow[r, hook] & \mathfrak{S}
\end{tikzcd}
\]
is called a \emph{formal smoothing} of $X$ if and only if there exists a natural number $a$ such that $\mathfrak{I}^a\subset\Fitt_{\dim X}(\Omega^1_{\mathfrak{X}/\mathfrak{S}})$, where $\mathfrak{I}$ is an ideal of definition of $\mathfrak{X}$ and $\Fitt_{\dim X}(\Omega^1_{\mathfrak{X}/\mathfrak{S}})$ is the Fitting sheaf of ideals.

We say that $X$ is \emph{formally smoothable} if it admits a formal smoothing.
\end{definition}

We point out that \Cref{Tziolas: smoothing over dvr is the same as formal smoothing} establishes an equivalence among two different notions of smoothing that, apparently, are very different. Indeed, in \Cref{def: smoothing}, the condition uses strongly the existence of a generic point, while in \Cref{def: formal smoothing} the same condition is ``forced'' to be algebraic since there is not a generic point in $\Spf k\llbracket t\rrbracket$.

As mentioned above the two notions differ only apparently as we are going to explain next.

First observe that any DVR which is a $k$-algebra is a local Noetherian ring; in particular we have that its completion with respect to the adic topology induced by its maximal ideal is isomorphic to the formal power series in one variable, $k\llbracket t\rrbracket$. We also remark that the (classical) spectrum of a DVR contains two points: the closed and the generic one. On the other hand, the formal spectrum of the formal power series ring is made of one point only. Therefore it is natural to define the notion of smoothing of a scheme over a DVR as a deformation of $X$ whose general fibre, i.e. the fibre over the open generic point, is smooth. On the other hand, in the case of formal deformation over $\Spf k\llbracket t \rrbracket$ such idea is not possible. However, Tziolas come up with a definition of formal smoothing that does not need the generic point, as we are going to explain now. Let us suppose that $\pi\colon\mathcal{X}\to B$ is a locally of finite type, flat of relative dimension $r$ morphism of schemes and define
\[
U_{r}=\left\{x\in\mathcal{X}\colon \pi\text{ is smooth at $x$ of relative dimension }r\right\}.
\]
By \cite[\href{https://stacks.math.columbia.edu/tag/02G2}{TAG 02G2}]{stacks-project}, it is an open subset of $\mathcal{X}$ and by \cite[407]{eisenbud1995commutative} or \cite[\href{https://stacks.math.columbia.edu/tag/0C3K}{TAG 0C3K}]{stacks-project} we have that
\[
U_{r}=\mathcal{X}\setminus\text{V}(\Fitt_{r}(\Omega^1_{\pi}))\,\,\,\,\,\,\,\,\text{ and }\,\,\,\,\,\,\,\,\text{Sing}_{r}(\pi)=\text{V}(\Fitt_r(\Omega^1_{\pi})).
\]
If we assume that $\pi$ is proper, then $\pi(U_r)\subset B$ is open too and $\pi|_{U_{r}}\colon U_r\to A_{r}$ is smooth of relative dimension $r$, where $A_r:=B\setminus\pi(\text{V}(\Fitt_r(\Omega^1_{\pi})))$. Doing a base change, we can always find a smoothing from the family over $B$ if and only if $A_r$ is not empty.

Suppose that $B$ is affine smooth curve over $k$, let $p\in B$ be a closed point and let $R:=\mathcal{O}_{B,p}$; it is known that $R$ is a DVR with residue field $k$. $\pi\colon\mathcal{X}\to B$ is a smoothing (according to \Cref{def: smoothing}) if and only if the pullback deformation $\mathcal{X}_{R}\to\Spec R$ along the localization morphism $\Spec R\to B$ is a smoothing (again in the sense of \Cref{def: smoothing}). We then have following diagram:
\[
\begin{tikzcd}
\mathcal{X}_{n} \arrow[d, "\pi|_{\mathcal{X}_n}"'] \arrow[r, hook] & \mathcal{X}_{\widehat{R}} \arrow[d, "\pi|_{\mathcal{X}_{\widehat{R}}}"'] \arrow[r, hook, "\beta"] & \mathcal{X}_R \arrow[d, "\pi|_{\mathcal{X}_R}"] \arrow[r, hook] & \mathcal{X} \arrow[d, "\pi"]\\
S_n \arrow[r, hook] & \Spec \widehat{R} \arrow[r, hook, "\alpha"'] & \Spec R \arrow[r, hook] & B
\end{tikzcd}
\]
where all squares are Cartesian, $\widehat{R}$ denotes the completion of $\mathcal{O}_{B,p}$ along its maximal ideal $\mathfrak{m}_{p}$ and, for every $n\in\mathbb{N}$, $R_n:=\frac{k\llbracket t\rrbracket}{(t^{n+1})}=\frac{k[t]}{(t^{n+1})}$ and $S_n:=\Spec R_n$. As previously mentioned, the completion of $\mathcal{O}_{B,p}$ along the maximal ideal is isomorphic to $k\llbracket t \rrbracket$.

In order to lighten the notation, let us denote $\pi_n:=\pi|_{\mathcal{X}_n}$, $\widehat{\pi}:=\pi|_{\mathcal{X}_{\widehat{R}}}$, $\widetilde{\pi}:=\pi|_{\mathcal{X}_R}$.

Observe now that $\alpha$ is a homeomorphism, hence $\beta$ is at least a bijective function on the sets; by \cite[Corollary~20.5]{eisenbud1995commutative} we have that
\[
\text{Sing}_r(\widehat{\pi})=\beta^{-1}(\text{Sing}_r(\widetilde{\pi})).
\]
Therefore, $\widetilde{\pi}$ is smooth of relative dimension $r$ along $\widetilde{\pi}^{-1}(\eta)$ if and only if $\widehat{\pi}$ is smooth of relative dimension $r$ along $\widehat{\pi}^{-1}(\widehat{\eta})$, where $\eta$ and $\widehat{\eta}$ are the generic points of $\Spec R$ and $\Spec\widehat{R}$ respectively. Now $\Spec R$ has only two points: the closed one, $Y$ with ideal sheaf $\mathcal{I}_{Y/\Spec R}=(t)$, and the open one, $\eta$. Let $C_r:=\text{Sing}_{r}(\widetilde{\pi})$. Now $\widetilde{\pi}(C_{r})\subset Y$ as schemes if and only if there exists a structure of closed $Spec R$-subscheme $\widetilde{Y}$ on $Y$ with $\widetilde{Y}_{\text{red}}=Y$ and such that $\widetilde{\pi}(C_{r})\subset\widetilde{Y}$ as sets. We are then reduced to classify all closed subscheme structures on $\Spec k\llbracket t\rrbracket$. These are given by $Y_{k}:=\text{V}((t^{k+1}))$, for every $k\in\mathbb{N}$. In particular we have a chain of closed subschemes
\[
Y=\text{V}((t))=Y_{0}\subset Y_1=\text{V}((t^2))\subset Y_3\subset\cdots.
\]
Hence, $\widetilde{Y}$ is a closed $\Spec R$-subscheme structure on $Y$ satisfying $\widetilde{Y}_{\text{red}}=Y$ and $\widetilde{\pi}(C_{r})\subset\widetilde{Y}$ if and only if there exists a non-negative integer $k$ such that $\widetilde{Y}=Y_{k}$.
Concluding, we have proven that the following statements are equivalent:
\begin{itemize}
\item[(a)] $\pi\colon\mathcal{X}\to B$ is smooth of relative dimension $r$;
\item[(b)] $\widetilde{\pi}\colon\mathcal{X}_{R}\to\Spec R$ is smooth of relative dimension $r$;
\item[(c)] $\widehat{\pi}\colon\mathcal{X}_{\widehat{R}}\to\Spec\widehat{R}$ is smooth of relative dimension $r$;
\item[(d)] there is a closed subscheme $\widetilde{Y}$ of $\Spec R$ such that $\widetilde{Y}_{\text{red}}=Y$ and $\widetilde{\pi}(C_{r})\subset\widetilde{Y}$;
\item[(e)] there exist a $k\in\mathbb{N}$ such that $\widetilde{\pi}(C_{r})\subset Y_{k}$;
\item[(f)] there exists a $k\in\mathbb{N}$ such that $C_{r}\subset\widetilde{\pi}^{-1}(Y_{k})$;
\item[(g)] there exists a $k\in\mathbb{N}$ such that
\[
\Fitt_r(\Omega^1_{\widetilde{\pi}})=\mathcal{I}_{C_{r}/\mathcal{X}}\supseteq\widetilde{\pi}^{-1}((t^{k+1}))=\widetilde{\pi}^{-1}(\mathcal{I}_{Y_{k}/\Spec R}).
\]
\end{itemize}
Observing that the condition we have found is independent of the ideal of definition, we have reached the definition of formal smoothing as given in \cite[Definition~11.6]{tziolas2010smoothings}.

%%%%%%%%%%%%%%%%%%%%%%
%%%%%%%%%%%%%%%%%%%%%%
\section{Gorenstein schemes, morphisms and their deformations}
%%%%%%%%%%%%%%%%%%%%%%
%%%%%%%%%%%%%%%%%%%%%%
In this part we will review, following \cite[\href{https://stacks.math.columbia.edu/tag/08XG}{Tag 08XG}]{stacks-project} and \cite[\href{https://stacks.math.columbia.edu/tag/0DWE}{Tag 0WDE}]{stacks-project}, the notions of dualising complexe and of Gorenstein morphisms. We then discuss how the Gorenstein property behaves under infinitesimal deformations. The main result of this section is that the relative dualising sheaf extends to every infinitesimal deformation. In the way to prove this result, we also present a proof of the classical result that deformation of a Gorenstein morphism is still Gorenstein, for which we were not able to find a proof in the literature.

%%%%%%%%%%%%%%%%%%%%%%%%%%%%
\subsection{Gorenstein schemes and morphisms}
%%%%%%%%%%%%%%%%%%%%%%%%%%%%
We start the section introducing the notions of dualising sheaf, Gorenstein scheme and Gorenstein morphism.

\begin{definition}\label{def: dualising complex}
Let $A$ be a Noetherian ring. A \emph{dualising complex} is a complex of $A$ modules $\omega_A^{\bullet}$ such that
\begin{enumerate}
\item $\omega_A^{\bullet}$ has finite injective dimension;
\item $\coho^i(\omega_A^{\bullet})$ is a finite $A$-module, for every $i$;
\item $A\to\mathbf{R}\Hom_A(\omega_A^{\bullet},\omega_A^{\bullet})$ is a quasi-isomorphism in the derived category of $A$-modules.
\end{enumerate}
\end{definition}

We remark that the dualising complex thus defined is not unique. Indeed, according to \cite[\href{https://stacks.math.columbia.edu/tag/0A7F}{TAG 0A7F}]{stacks-project}, if $\omega_{A}^{\bullet}$ and $\nu_{A}^{\bullet}$ are two dualising complexes for $A$, then there exists an invertible object $L^{\bullet}\in\text{D}(A)$ such that $\nu_{A}^{\bullet}$ is quasi-isomorphic to $\omega_{A}^{\bullet}\otimes_{A}^{\textbf{L}}L^{\bullet}$.

\begin{definition}\label{def: Gorenstein local ring}
Let $A$ be a local Noetherian ring. We say that $A$ is a \emph{Gorenstein local ring} if $A[0]$ is a dualising complex.
\end{definition}

%remark: a dualising complex as defined is not unique; it is invariant for the dimension and tensoring by a linear bundle.

\begin{definition}\label{def: Gorenstein scheme}
A scheme $X$ is called \emph{Gorenstein} if it is locally Noetherian and for every $x\in X$, $\mathcal{O}_{x, X}$ is a Gorenstein local ring according to \Cref{def: Gorenstein local ring}.
\end{definition}

\begin{definition}\label{def: Gorenstein morphism}
Let $f\colon X\to Y$ be a morphism of schemes such that for every $y\in Y$, the fibre $X_y$ is a locally Noetherian scheme.
\begin{enumerate}
\item Let $x\in X$ and $y:=f(x)$. We say that $f$ is \emph{Gorenstein at $x$} if $f$ is flat at $x$ and $\mathcal{O}_{X_y,x}$ is a Gorenstein local ring.
\item We say that $f$ is \emph{Gorenstein} if it is Gorenstein at $x$, for all $x\in X$.
\end{enumerate}
\end{definition}

\begin{lemma}[{\cite[\href{https://stacks.math.columbia.edu/tag/0C12}{Tag 0C12}]{stacks-project}}]\label{lemma: Gorenstein scheme and flat morphism imply Gorenstein morphism}
Let $f\colon X\to Y$ be a flat morphism of locally Noetherian schemes. If $X$ is Gorenstein, then $f$ is Gorenstein.
\end{lemma}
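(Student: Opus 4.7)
The plan is to reduce the statement to the standard commutative algebra fact that, for a flat local homomorphism of Noetherian local rings, the Gorenstein property of the target forces the closed fibre to be Gorenstein. First I would fix an arbitrary point $x\in X$ and set $y:=f(x)$. Flatness of $f$ says precisely that the induced ring homomorphism
\[
\varphi\colon\mathcal{O}_{Y,y}\longrightarrow\mathcal{O}_{X,x}
\]
is a flat local homomorphism of Noetherian local rings, while the hypothesis that $X$ is Gorenstein gives, via \Cref{def: Gorenstein scheme}, that $\mathcal{O}_{X,x}$ is a Gorenstein local ring.

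Next I would identify the local ring of the fibre $X_y:=X\times_Y\Spec\kappa(y)$ at the point $x$. Since $\mathcal{O}_{X,x}$ is local and $\varphi$ is local, $\mathfrak{m}_y\mathcal{O}_{X,x}$ is contained in the maximal ideal of $\mathcal{O}_{X,x}$, so
\[
\mathcal{O}_{X_y,x}\;\cong\;\mathcal{O}_{X,x}\otimes_{\mathcal{O}_{Y,y}}\kappa(y)\;=\;\mathcal{O}_{X,x}/\mathfrak{m}_y\mathcal{O}_{X,x},
\]
which is a Noetherian local ring. Since flatness of $f$ at $x$ is already part of the hypothesis, proving that $f$ is Gorenstein at $x$ in the sense of \Cref{def: Gorenstein morphism} amounts exactly to showing that this quotient ring is a Gorenstein local ring.

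The main ingredient is then the standard transfer principle for Gorenstein rings along flat local homomorphisms: if $A\to B$ is a flat local homomorphism of Noetherian local rings, then $B$ is Gorenstein if and only if both $A$ and the fibre $B/\mathfrak{m}_A B$ are Gorenstein. Applying this to $\varphi$, the Gorenstein property of $\mathcal{O}_{X,x}$ forces $\mathcal{O}_{X_y,x}$ to be Gorenstein as well. Since $x\in X$ was arbitrary, $f$ is Gorenstein at every point, hence Gorenstein.

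The only non-trivial input in this argument is the transfer principle itself, whose proof proceeds by comparing injective resolutions, or equivalently dualising complexes, across the flat base change $A\to B$ and by reducing (via a regular sequence lifting $\mathfrak{m}_A$) to the Artinian case. This is a well-documented result in the commutative algebra literature and in the Stacks Project, so beyond the purely formal reduction above no further computation is required.
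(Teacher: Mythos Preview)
Your argument is correct and is exactly the standard one: reduce to stalks, identify the local ring of the fibre as $\mathcal{O}_{X,x}/\mathfrak{m}_y\mathcal{O}_{X,x}$, and invoke the flat descent/ascent criterion for the Gorenstein property along a flat local homomorphism of Noetherian local rings. The paper does not supply its own proof of this lemma but simply cites the Stacks Project \cite[\href{https://stacks.math.columbia.edu/tag/0C12}{Tag 0C12}]{stacks-project}, whose proof is precisely the one you outline (there the transfer principle is \cite[\href{https://stacks.math.columbia.edu/tag/0BJL}{Tag 0BJL}]{stacks-project}); so there is nothing to compare beyond noting that your write-up matches the cited reference.
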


\begin{proposition}[{\cite[\href{https://stacks.math.columbia.edu/tag/0C07}{Tag 0C07}]{stacks-project}}]\label{prop: base change of Gorenstein morphism is Gorenstein}
Let $f\colon X\to Y$ be a morphism of schemes such that for every $y\in Y$ the fiber $X_y$ is locally Noetherian and let $g\colon Y'\to Y$ be a locally of finite type morphism of schemes. Consider the following Cartesian diagram
\begin{equation}\label{eq: fibre product diagram}
\begin{tikzcd}
X' \arrow[d, "f'"] \arrow[r, "g'"] & X \arrow[d, "f"]\\
Y' \arrow[r, "g"] & Y.
\end{tikzcd}
\end{equation}
If $f'$ is Gorenstein at $x'\in X'$ and $f$ is flat at $g'(x')$, then $f$ is Gorenstein at $g'(x')$.
\end{proposition}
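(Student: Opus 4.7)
The plan is to translate the geometric statement into a descent question in commutative algebra. Write $x := g'(x')$, $y := f(x)$, and $y' := f'(x')$; Cartesianness of \cref{eq: fibre product diagram} forces $g(y') = y$ and yields the identification $X'_{y'} \cong X_y \times_{\Spec\kappa(y)} \Spec\kappa(y')$, with $x'$ lying above $x$. Since $g$ is locally of finite type, the residue field extension $\kappa(y)\hookrightarrow\kappa(y')$ is finitely generated, so $X'_{y'}$ is locally Noetherian. Setting $A := \mathcal{O}_{X_y,x}$ and $C := \mathcal{O}_{X'_{y'},x'}$, both are Noetherian local rings, and the hypothesis that $f'$ is Gorenstein at $x'$ means exactly that $C$ is Gorenstein. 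Since flatness of $f$ at $x$ is already part of the assumption, by \Cref{def: Gorenstein morphism} it is enough to prove that $A$ itself is Gorenstein.

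Next I would observe that the natural map $A \to C$ is a faithfully flat local homomorphism. Indeed, $C$ is a localization of $A \otimes_{\kappa(y)} \kappa(y')$ at a prime lying above $\mathfrak{m}_A$, and the base change $A \to A \otimes_{\kappa(y)} \kappa(y')$ along the field extension $\kappa(y)\to\kappa(y')$ is flat (field extensions are flat); the composition is therefore a local flat map of local Noetherian rings, and any such map is automatically faithfully flat.

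At this point the proof is reduced to the standard descent statement: if $A \to C$ is a faithfully flat local homomorphism of Noetherian local rings and $C$ is Gorenstein, then so is $A$. One way to see this is to characterise Gorenstein local rings by finiteness of self-injective dimension and combine this with the flat base change isomorphism for $\ext$, which transfers an injective resolution of $C$ over itself to a bounded resolution witnessing finite self-injective dimension of $A$; equivalently one may appeal to faithfully flat descent of dualising complexes to conclude that $A[0]$ is dualising for $A$ once $C[0]$ is dualising for $C$. Either route yields that $A$ is Gorenstein, whence $f$ is Gorenstein at $x = g'(x')$. The only substantive obstacle in the argument is this descent step; all of the preceding material is routine bookkeeping about the Cartesian square and the induced map on local rings at $x'$ and $x$.
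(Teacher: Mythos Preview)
Your argument is correct and is precisely the standard route: reduce to the fibre identification $X'_{y'}\cong X_y\times_{\kappa(y)}\kappa(y')$, observe that the induced map $\mathcal{O}_{X_y,x}\to\mathcal{O}_{X'_{y'},x'}$ is a flat local homomorphism of Noetherian local rings, and then invoke descent of the Gorenstein property along such a map. The paper does not supply its own proof of this proposition but simply cites \cite[\href{https://stacks.math.columbia.edu/tag/0C07}{Tag 0C07}]{stacks-project}, whose argument is exactly the one you outline; the descent step you isolate is the content of \cite[\href{https://stacks.math.columbia.edu/tag/0C02}{Tag 0C02}]{stacks-project} (if $A\to B$ is flat local between Noetherian local rings and $B$ is Gorenstein, then so is $A$).
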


From this it follows that being a Gorenstein is local in the flat topology on the category of schemes.

%%%%%%%%%%%%%%%%%%%%%%%%%%
\subsection{Right adjoint to the pushforward and relative dualising complex}
%%%%%%%%%%%%%%%%%%%%%%%%%%
Now we introduce the derived pushforward functor and its right adjoint. This machinery will be used to define a relative dualising complex and to show that it behaves well under pullbacks.

\begin{definition}
Let $f\colon X\to Y$ be a morphism of scheme with $Y$ quasi-compact. By \cite[\href{https://stacks.math.columbia.edu/tag/0A9E}{Tag 0A9E}]{stacks-project}, $\textbf{R}f_*\colon\text{D}_{\text{QCoh}}(X)\to\text{D}_{\text{QCoh}}(Y)$ admits a right adjoint and we denote it by $\Psi\colon\text{D}_{\text{QCoh}}(Y)\to\text{D}_{\text{QCoh}}(X)$.
\end{definition}

\begin{definition}\label{def: relative dualising complex}
Let $Y$ be a quasi-compact scheme, let $f\colon X\to Y$ be a proper, flat morphism of finite presentation and let $\Psi$ be the right adjoint for $\mathbf{R}f_*$. We define the \emph{relative dualising complex} $\omega_f^{\bullet}$ of $f$ (or of $X$ over $Y$) as follows
\[
\omega_f^{\bullet}:=\Psi(\mathcal{O}_Y)\in\text{D}_{\text{QCoh}}(X).
\]
\end{definition}

The following proposition explains the behaviour of the relative dualising complex under base change.

\begin{proposition}[{\cite[\href{https://stacks.math.columbia.edu/tag/0AAB}{Tag 0AAB}]{stacks-project}}]\label{coro: base change for dualising complex}
Let $X$ be a scheme, let $Y$ and $Y'$ be quasi-compact schemes, let $g\colon Y'\to Y$ also be any morphism and let $f\colon X\to Y$ be a proper, flat morphism of finite presentation. Consider the fibre diagram as in \cref{eq: fibre product diagram}. Then we have a canonical isomorphism
\[
\omega_{f'}^{\bullet}\cong\mathbf{L}(g')^*\omega_f^{\bullet}\in\text{D}_{\text{QCoh}}(X'),
\]
where $X':=X\times_{Y}Y'$.
\end{proposition}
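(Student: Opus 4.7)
The strategy is to construct a canonical comparison map $\mathbf{L}(g')^*\omega_f^{\bullet}\to\omega_{f'}^{\bullet}$ by adjunction and then to verify that it is an isomorphism. First I would start from the counit $\mathbf{R}f_*\omega_f^{\bullet}=\mathbf{R}f_*\Psi(\mathcal{O}_Y)\to\mathcal{O}_Y$ of the $(\mathbf{R}f_*,\Psi)$-adjunction used to define $\omega_f^{\bullet}$. Applying $\mathbf{L}g^*$ gives a morphism $\mathbf{L}g^*\mathbf{R}f_*\omega_f^{\bullet}\to\mathcal{O}_{Y'}$. Since $f$ is flat, $f$ and $g$ are automatically Tor-independent over $Y$, so flat base change in the derived category of quasi-coherent sheaves provides a canonical isomorphism $\mathbf{L}g^*\mathbf{R}f_*(-)\cong\mathbf{R}f'_*\mathbf{L}(g')^*(-)$. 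Combining these two and then invoking the $(\mathbf{R}f'_*,\Psi')$-adjunction for the base-changed morphism $f'$ (which exists by the same Stacks reference since $Y'$ is quasi-compact and $f'$ remains proper, flat and of finite presentation) produces the desired candidate $\mathbf{L}(g')^*\omega_f^{\bullet}\to\Psi'(\mathcal{O}_{Y'})=\omega_{f'}^{\bullet}$.

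\textbf{Checking it is an isomorphism.} The question is local on $Y'$, so we may assume $Y=\Spec R$ and $Y'=\Spec R'$ are affine. Using that $f$ is of finite presentation, one can further reduce, by writing $Y'$ as a cofiltered limit of finitely presented $Y$-schemes and applying standard spreading-out arguments, to the case in which $g$ is itself of finite presentation. The key ingredient is then the projection formula for the proper morphism $f$: for every perfect complex $K$ on $Y$ one has $\mathbf{R}f_*(\Psi(\mathcal{O}_Y)\otimes^{\mathbf{L}}_{\mathcal{O}_X}\mathbf{L}f^*K)\cong\mathbf{R}f_*\Psi(\mathcal{O}_Y)\otimes^{\mathbf{L}}_{\mathcal{O}_Y}K$, which combined with adjunction upgrades to $\Psi(K)\cong\Psi(\mathcal{O}_Y)\otimes^{\mathbf{L}}_{\mathcal{O}_X}\mathbf{L}f^*K$. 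Since $D_{\mathrm{QCoh}}(X')$ is compactly generated by perfect complexes, it suffices to test the base change arrow on such generators, and then the projection formula for $f'$ together with the already-established flat base change for $\mathbf{R}f_*$ forces the map to be an equivalence.

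\textbf{The main obstacle.} The technical heart of the argument is justifying flat base change in the unbounded derived category of quasi-coherent sheaves in the sharp form $\mathbf{L}g^*\mathbf{R}f_*\cong\mathbf{R}f'_*\mathbf{L}(g')^*$ under the given hypotheses, together with the projection-formula compatibility of $\Psi$ with $\otimes^{\mathbf{L}}\mathbf{L}f^*(-)$. This is precisely where properness and flatness of $f$ are both essential: properness guarantees the projection formula holds for perfect complexes (and hence that $\Psi$ has the tensor-compatibility needed to reduce the isomorphism check to compact generators), while flatness makes the base change transformation an honest equivalence with no obstructing higher Tor terms. Once these two ingredients are in place, the isomorphism $\omega_{f'}^{\bullet}\cong\mathbf{L}(g')^*\omega_f^{\bullet}$ is formal.
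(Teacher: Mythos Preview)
The paper does not supply a proof of this proposition: it is stated with a citation to \cite[\href{https://stacks.math.columbia.edu/tag/0AAB}{Tag 0AAB}]{stacks-project} and no argument is given in the text. There is therefore nothing in the paper to compare your proposal against.

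That said, your outline is essentially the standard Stacks Project argument: construct the base change map from the counit of the $(\mathbf{R}f_*,\Psi)$-adjunction via Tor-independent (here flat) base change for $\mathbf{R}f_*$, and then verify it is an isomorphism using the projection-formula compatibility $\Psi(K)\cong\Psi(\mathcal{O}_Y)\otimes^{\mathbf{L}}\mathbf{L}f^*K$. One refinement worth noting: the Stacks proof does not pass through a noetherian-approximation or spreading-out step as you suggest; instead it uses directly that for $f$ proper, flat, of finite presentation the functor $\mathbf{R}f_*$ preserves perfect complexes, so that $\Psi$ commutes with direct sums, and this is what reduces the isomorphism check to compact generators. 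Your limit argument is not wrong, but it is unnecessary here and slightly obscures the role of the hypothesis ``finite presentation''.
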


%%%%%%%%%%%%%%%%%%%
\subsection{Upper shriek functor and Gorenstein morphisms}
%%%%%%%%%%%%%%%%%%%
We now introduce the upper shriek functor and explain its relationships with the right adjoint functor for the derived pushforward functor and with Gorenstein morphisms.

Remember from \Cref{conventions} that FTS is the category whose objects are separated, algebraic schemes over the field $k$ and whose morphisms are morphisms of $k$-schemes.

\begin{definition}\label{def: upper shriek functor}
Let $f\colon X\to Y$ be a morphism in the category of FTS schemes. We define the upper shriek functor
\[
f^!\colon\text{D}^+_{\text{QCoh}}(\mathcal{O}_Y)\to\text{D}^+_{\text{QCoh}}(\mathcal{O}_X)
\]
as follows. We choose a compactification $X\to\bar{X}$ of $X$ over $Y$. Such a compactification always exists by \cite[\href{https://stacks.math.columbia.edu/tag/0F41}{Tag 0F41}]{stacks-project} and \cite[\href{https://stacks.math.columbia.edu/tag/0A9Z}{Tag 0A9Z}]{stacks-project}. Let denote by $\bar{f}\colon\bar{X}\to Y$ the structure morphism and consider its right adjoint functor $\bar{\Psi}$; we then let $f^! K:=\bar{\Psi}(K)|_{X}$ for $K\in\text{D}^+_{\text{QCoh}}(\mathcal{O}_Y)$.
\end{definition}

According to \cite[\href{https://stacks.math.columbia.edu/tag/0AA0}{Tag 0AA0}]{stacks-project}, the definition of the upper shriek functor is, up to canonical isomorphism, independent of the choice of the compactification of $X$.

\begin{remark}\label{rem: relationship between upper shriek and right derived functor in case of proper morphism}
We point out that if $f\colon X\to Y$ is a proper morphism in the category FTS, then $\bar{\Psi}=\Psi$, implying that the upper shriek functor is the restriction to $\text{D}_{\text{QCoh}}(\mathcal{O}_Y)$ of $\Psi$, the right adjoint functor of $\mathbf{R}f_*$ (see \cite[\href{https://stacks.math.columbia.edu/tag/0AU3}{Tag 0AU3}]{stacks-project}).
\end{remark}

We are now ready to present the link between the Gorenstein condition and the upper shriek functor.

\begin{proposition}[{\cite[\href{https://stacks.math.columbia.edu/tag/0C08}{Tag 0C08}]{stacks-project}}]\label{prop: Gorenstein iff dualising complex is invertible in an open subset}
Consider $f\colon X\to Y$ a flat morphism of schemes in FTS and let $x\in X$. Then the following conditions are equivalent:
\begin{enumerate}
\item $f$ is Gorenstein at $x$;
\item $f^!\mathcal{O}_Y$ is isomorphic to an invertible object (of the derived category) in a neighbourhood of $x$.
\end{enumerate}
In particular the set $\{x\in X\colon f\text{ is Gorenstein at }x\}$ is open in $X$.
\end{proposition}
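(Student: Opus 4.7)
The plan is to reduce the problem to the fibre $X_y$ over $y=f(x)$ via base change for the upper shriek functor, and then apply the standard local-algebraic characterisation of Gorenstein rings through their dualising complex. Throughout, I will use that the condition in (2) and Gorensteinness at $x$ are both local on $X$, so I may freely localise at $x$.

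First I would establish a fibrewise compatibility: if $f_y\colon X_y\to\Spec\kappa(y)$ denotes the fibre morphism and $i_y\colon X_y\hookrightarrow X$ the closed immersion, then for $f$ flat in FTS there is a canonical isomorphism $\mathbf{L}i_y^*\bigl(f^!\mathcal{O}_Y\bigr)\cong f_y^!\kappa(y)$ in $D^+_{\mathrm{QCoh}}(X_y)$. To obtain this, I would first factor $\Spec\kappa(y)\to Y$ through the flat localisation $\Spec\mathcal{O}_{Y,y}\to Y$, where \Cref{coro: base change for dualising complex} (after passing to a compactification of $f$) gives the required compatibility, and then pass to the residue field using that $f$ is flat, so derived pullback along the closed immersion into the fibre coincides with ordinary restriction. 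Since $X_y$ is a scheme of finite type over the field $\kappa(y)$, the complex $f_y^!\kappa(y)$ is a dualising complex on $X_y$, and its stalk at $x$ is therefore a dualising complex for the Noetherian local ring $\mathcal{O}_{X_y,x}$. Combining this with the fact that a dualising complex over a Noetherian local ring $A$ is invertible in $D(A)$ if and only if it is isomorphic to a shift $A[n]$, which by \Cref{def: Gorenstein local ring} holds precisely when $A$ is Gorenstein, reduces the proposition to a comparison of invertibilities.

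With this in hand, (2)$\Rightarrow$(1) is obtained by pulling back to the fibre: if $f^!\mathcal{O}_Y$ is invertible near $x$, then $f_y^!\kappa(y)$ is invertible near $x$ in $X_y$, so its stalk at $x$ is an invertible dualising complex for $\mathcal{O}_{X_y,x}$, whence $\mathcal{O}_{X_y,x}$ is Gorenstein and $f$ is Gorenstein at $x$ (it was already flat). Conversely, for (1)$\Rightarrow$(2), Gorensteinness of $\mathcal{O}_{X_y,x}$ gives invertibility of $f_y^!\kappa(y)$ near $x$ in $X_y$, and to promote this to invertibility of $f^!\mathcal{O}_Y$ near $x$ in $X$ I would invoke that $f^!\mathcal{O}_Y$ is relatively perfect over $Y$ (a consequence of $f$ being flat and of finite type), so a Nakayama-type argument for perfect complexes lets fibrewise invertibility spread to an open neighbourhood. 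The final openness assertion is then automatic, since the set of points where a perfect complex is invertible is open.

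The main obstacle I expect is the fibrewise base change for $f^!$: the residue-field inclusion $\Spec\kappa(y)\to Y$ is not flat, so \Cref{coro: base change for dualising complex} does not apply directly, and one must either work in two stages (flat localisation followed by a closed immersion, exploiting the flatness of $f$) or appeal to a more general base-change formalism for upper shriek. A secondary technical point is the passage from fibrewise invertibility of $f^!\mathcal{O}_Y$ to invertibility in a neighbourhood, which rests on the perfectness of $f^!\mathcal{O}_Y$ relative to $Y$ for $f$ flat and of finite type.
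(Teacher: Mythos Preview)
The paper does not give its own proof of this proposition; it is quoted verbatim from \cite[\href{https://stacks.math.columbia.edu/tag/0C08}{Tag 0C08}]{stacks-project} and used as a black box. Your sketch is in fact a faithful outline of the Stacks project argument: reduce to the fibre via the base-change isomorphism $\mathbf{L}i_y^*(f^!\mathcal{O}_Y)\cong f_y^!\kappa(y)$, recognise $f_y^!\kappa(y)$ as a dualising complex on $X_y$, use that a Noetherian local ring is Gorenstein iff its dualising complex is a shift of the structure sheaf, and then spread fibrewise invertibility to a neighbourhood using that $f^!\mathcal{O}_Y$ has coherent cohomology and is $Y$-perfect. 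You have also correctly flagged the two genuine technical points (non-flat base change for $f^!$ and the Nakayama step), which the Stacks project handles through its earlier lemmas on the upper shriek functor. There is nothing further to compare.
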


If we assumed that $f$ were proper, then $\{y\in Y\colon f\text{ is Gorenstein at }x\in f^{-1}(y)\}$ is open in the target.

%%%%%%%%%%%%%%%%%%%%%%%%%%
\subsection{Relative dualising sheaf and dualising complex}
%%%%%%%%%%%%%%%%%%%%%%%%%%
The aim of this subsection is to show that all the definitions given until now, under mild hypotheses, converge. In particular, the next proposition introduces the notion of relative dualising sheaf for a morphism in the category FTS and describes its relationships with the relative dualising complex and with the Gorenstein morphisms.

\begin{proposition}[{\cite[\href{https://stacks.math.columbia.edu/tag/0BV8}{Tag 0BV8}]{stacks-project}}]\label{prop: existence of the relative dualising sheaf}
Let $X$ and $Y$ be separated schemes and let $f\colon X\to Y$ be a Gorenstein morphism of schemes. Then there exists a coherent, invertible sheaf, called the relative dualising sheaf of $f$ and denoted by $\omega_f$, which is flat over $Y$ and satisfies
\[
f^{!}\mathcal{O}_{Y}\cong\omega_f[-d],
\]
where $d$ is the locally constant function on $X$ which gives the relative dimension of $X$ over $Y$.

If $f$ is also proper, flat and of finite presentation, then $\omega_f^{\bullet}=\omega_f[-d]$.
\end{proposition}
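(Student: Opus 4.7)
The plan is to bootstrap from the characterization of the Gorenstein property in terms of invertibility of $f^{!}\mathcal{O}_{Y}$ (Proposition \ref{prop: Gorenstein iff dualising complex is invertible in an open subset}), extract a genuine invertible sheaf by identifying the correct cohomological shift, and then apply the base change results together with Remark \ref{rem: relationship between upper shriek and right derived functor in case of proper morphism} to pin down the final identification with $\omega_{f}^{\bullet}$.

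First, because $f$ is Gorenstein at every $x\in X$, Proposition \ref{prop: Gorenstein iff dualising complex is invertible in an open subset} gives, for each $x$, an open neighbourhood $U$ of $x$ on which $f^{!}\mathcal{O}_{Y}|_{U}$ is isomorphic to an invertible object of $\text{D}(\mathcal{O}_{U})$. A standard fact about invertible objects in the derived category of a locally ringed space (shrinking $U$ if needed) shows that such an object is of the form $\mathscr{L}[n]$ for some invertible $\mathcal{O}_{U}$-module $\mathscr{L}$ and some integer $n$ which is locally constant on $U$.

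Next I would identify $n=-d(x)$, where $d(x)$ is the relative dimension of $f$ at $x$. The cleanest route is base change along $i_{y}\colon X_{y}\hookrightarrow X$ with $y=f(x)$: by Proposition \ref{coro: base change for dualising complex} (extended to $f^{!}$ via the compactification recipe in Definition \ref{def: upper shriek functor}), we have $f_{y}^{!}\mathcal{O}_{\Spec\kappa(y)}\cong\mathbf{L}i_{y}^{*}f^{!}\mathcal{O}_{Y}$, so it suffices to compute the shift for the morphism $X_{y}\to\Spec\kappa(y)$ at $x$. Since $\mathcal{O}_{X_{y},x}$ is a Gorenstein local ring of dimension $d(x)$, its dualising complex (which computes $f_{y}^{!}\mathcal{O}_{\Spec\kappa(y)}$ stalkwise) is quasi-isomorphic to $\mathcal{O}_{X_{y},x}[d(x)]$, pinning down $n=-d(x)$. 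Setting $\omega_{f}:=\mathcal{H}^{d}(f^{!}\mathcal{O}_{Y})$ then produces a globally defined invertible $\mathcal{O}_{X}$-module, and the canonical truncation map furnishes the isomorphism $f^{!}\mathcal{O}_{Y}\cong\omega_{f}[-d]$. Flatness of $\omega_{f}$ over $Y$ is automatic: $\omega_{f}$ is locally free, hence flat, over $\mathcal{O}_{X}$, and $f$ is flat, so transitivity of flatness does the job.

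For the last assertion, assume $f$ is proper, flat, and of finite presentation. By Remark \ref{rem: relationship between upper shriek and right derived functor in case of proper morphism}, the upper shriek functor $f^{!}$ restricted to $\text{D}_{\text{QCoh}}(\mathcal{O}_{Y})$ coincides with the right adjoint $\Psi$ of $\mathbf{R}f_{*}$. Hence by the very Definition \ref{def: relative dualising complex} of $\omega_{f}^{\bullet}$ one obtains $\omega_{f}^{\bullet}=\Psi(\mathcal{O}_{Y})=f^{!}\mathcal{O}_{Y}\cong\omega_{f}[-d]$, as claimed. The main obstacle I expect is the shift identification: while the invertibility of $f^{!}\mathcal{O}_{Y}$ is handed to us by Proposition \ref{prop: Gorenstein iff dualising complex is invertible in an open subset}, matching the degree to $d$ requires a careful local calculation on a fibre, which is why reducing to the Gorenstein local ring $\mathcal{O}_{X_{y},x}$ via base change is the key simplification.
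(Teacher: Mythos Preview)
The paper does not supply its own proof of this proposition; it is stated as a citation to the Stacks Project (Tag~0BV8) and used as a black box in the subsequent argument, so there is no proof in the paper to compare your proposal against.

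That said, your outline is the standard one and is essentially correct: invoke Proposition~\ref{prop: Gorenstein iff dualising complex is invertible in an open subset} to get local invertibility of $f^{!}\mathcal{O}_{Y}$, write the invertible object locally as a shifted line bundle, pin down the shift on fibres, define $\omega_{f}$ as the unique nonzero cohomology sheaf, and in the proper case identify $f^{!}$ with $\Psi$ via Remark~\ref{rem: relationship between upper shriek and right derived functor in case of proper morphism}. Two small points of care. First, your appeal to Proposition~\ref{coro: base change for dualising complex} to reduce to a fibre is stated there only for $\omega_{f}^{\bullet}=\Psi(\mathcal{O}_{Y})$ with $f$ proper; extending it to $f^{!}$ in the non-proper case does go through the compactification in Definition~\ref{def: upper shriek functor} as you indicate, but this is an extra verification, not an immediate consequence. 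Second, watch the sign of the shift: you say the dualising complex of a Gorenstein local ring of dimension $d$ is $\mathcal{O}[d]$, which sits in degree $-d$, so that would give $n=d$ rather than $n=-d$. This is purely a matter of normalisation (and the paper's own shift conventions here are not the most transparent), but make sure your chosen convention is consistent with the formula $f^{!}\mathcal{O}_{Y}\cong\omega_{f}[-d]$ you are aiming for.
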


If $Y=\Spec k$, then we denote the relative dualising sheaf of $X$ over $k$ by $\omega_{X}$.

\begin{proposition}\label{prop: infinitesimal deformation of Gorenstein scheme is Gorenstein}
Let $X$ be a Gorenstein scheme and let $A$ be an Artinian local $k$-algebra with residue field $k$. Consider now a deformation of $X$ over $A$; that is a Cartesian diagram
\[
\begin{tikzcd}
X \arrow[d] \arrow[r, hook] & \mathcal{X} \arrow[d, "f"]\\
\Spec k \arrow[r, hook] & \Spec A
\end{tikzcd}
\]
with $f$ flat (see \cite{sernesi2007deformations}).
Then $f$ is a Gorenstein morphism.
\end{proposition}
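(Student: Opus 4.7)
The plan is to unwind the definition of Gorenstein morphism (\Cref{def: Gorenstein morphism}) and observe that, because $A$ is Artinian local with residue field $k$, the map $f$ has only one fibre, and that fibre is exactly $X$.

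More precisely, the first step is to check that the two conditions defining ``$f$ Gorenstein at $x$'' for a point $x\in\mathcal{X}$ are satisfied. Flatness of $f$ at $x$ is immediate from the hypothesis that $f$ is a deformation of $X$ over $A$. For the second condition, I would use that $\Spec A$ consists of a single point $y_0$ (the unique maximal ideal of the Artinian local ring $A$) with residue field $\kappa(y_0)=k$. Hence for every $x\in\mathcal{X}$ one has $f(x)=y_{0}$, and the scheme-theoretic fibre is
\[
\mathcal{X}_{y_0}=\mathcal{X}\times_{\Spec A}\Spec\kappa(y_0)=\mathcal{X}\times_{\Spec A}\Spec k=X,
\]
where the last equality comes from the Cartesian square defining the deformation. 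In particular the underlying topological space of $\mathcal{X}$ coincides with that of $X$, so the set of points of $\mathcal{X}$ is the same as the set of points of $X$.

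The second step is then trivial: for every $x\in\mathcal{X}=X$, the local ring $\mathcal{O}_{\mathcal{X}_{f(x)},x}$ is $\mathcal{O}_{X,x}$, which is a Gorenstein local ring by the hypothesis that $X$ is Gorenstein (\Cref{def: Gorenstein scheme}). Combining this with flatness gives the desired Gorenstein property of $f$ at every point.

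There is essentially no obstacle here: the whole argument is a matter of recognising that an Artinian local base, by virtue of being a one-point scheme with residue field equal to $k$, turns the fibrewise Gorenstein condition into exactly the Gorenstein condition on the central fibre $X$. The only thing to be slightly careful about is the standing convention that all schemes are of finite type over $k$, which guarantees in particular that the fibres are locally Noetherian as required in \Cref{def: Gorenstein morphism}.
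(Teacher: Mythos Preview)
Your argument is correct. It is essentially the same observation as the paper's proof, but carried out more directly: you unwind \Cref{def: Gorenstein morphism} and use that $\Spec A$ has a single point with residue field $k$, so the unique fibre of $f$ is $X$ itself, whose local rings are Gorenstein by hypothesis.

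The paper instead packages this into two citations: first \Cref{lemma: Gorenstein scheme and flat morphism imply Gorenstein morphism} to conclude that $X\to\Spec k$ is a Gorenstein morphism, and then \Cref{prop: base change of Gorenstein morphism is Gorenstein} (the ``descent'' direction of base change for Gorenstein morphisms) applied to the Cartesian square to deduce that $f$ is Gorenstein. Note that the second step still implicitly uses the fact that $X\hookrightarrow\mathcal{X}$ is surjective on points, which is exactly your observation that $\Spec A$ is a one-point scheme; otherwise \Cref{prop: base change of Gorenstein morphism is Gorenstein} would only give the Gorenstein property at points in the image of $g'$. So the two arguments have the same content, but yours is self-contained and avoids invoking a base-change lemma that is stronger than what is needed here.
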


\begin{proof}
Since $X$ is Gorenstein and $X\to\Spec k$ is flat, by \Cref{lemma: Gorenstein scheme and flat morphism imply Gorenstein morphism} it follows that $X\to\Spec k$ is Gorenstein. Applying now \Cref{prop: base change of Gorenstein morphism is Gorenstein}, we deduce that $f\colon\mathcal{X}\to\Spec A$ is Gorenstein.
\end{proof}

\begin{remark}
The result can be improved to obtain that the scheme $\mathcal{X}$ is Gorenstein. This is true as soon as we require that the affine base scheme $\Spec A$ is the spectrum of a local, Artinian, Gorenstein $k$-algebra $A$. This result and its proof can be found in \cite{nobile2021}.
\end{remark}

Now we present the first result that will help us to deduce the existence of a geometric smoothing.

\begin{proposition}\label{dualising sheaf of a Gorenstein proper scheme always extends}
Let $X$ be a proper, Gorenstein scheme. If $\mathfrak{f}\colon\mathfrak{X}\to\mathfrak{S}$ is a formal deformation of $X$, then there exists a unique invertible formal sheaf $\mathfrak{L}$ on $\mathfrak{X}$ such that $\mathfrak{L}\otimes_{\mathcal{O}_{\mathfrak{X}}}\mathcal{O}_{X}\cong\omega_X$ and $\mathfrak{L}\otimes_{\mathcal{O}_{\mathfrak{X}}}\mathcal{O}_{\mathcal{X}_{n}}\cong\omega_{f_n}$, for every $n\in\mathbb{N}$, where $\omega_{f_n}$ is the relative dualising sheaf. In particular, every morphism $f_{n}$ is Gorenstein.
\end{proposition}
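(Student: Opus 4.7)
The plan is to use the description of a formal deformation as a compatible system of infinitesimal deformations $\{f_n\colon \mathcal{X}_n\to S_n\}_{n\geq 0}$ (with $\mathcal{X}_0=X$), produce invertible relative dualising sheaves on each level, verify that they assemble into a projective system, and then invoke \Cref{teo: locally free sheaves on each nilpotent subscheme induce a locally free sheaf on the completion} to obtain the formal invertible sheaf $\mathfrak{L}$.

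First I would note that each base $S_n=\Spec k[t]/(t^{n+1})$ is local Artinian with residue field $k$, so by \Cref{prop: infinitesimal deformation of Gorenstein scheme is Gorenstein} each $f_n$ is a Gorenstein morphism. Because the morphism $\mathfrak{f}$ is proper and adic, each $f_n$ is moreover proper, flat, and of finite presentation, so \Cref{prop: existence of the relative dualising sheaf} applies: for every $n\in\mathbb{N}$ there is an invertible coherent sheaf $\omega_{f_n}$ on $\mathcal{X}_n$, flat over $S_n$, with $\omega_{f_n}^{\bullet}\cong\omega_{f_n}[-d]$ where $d=\dim X$. For $n=0$ this recovers $\omega_X$.

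The key point is the compatibility under the closed immersions $j_n\colon \mathcal{X}_n\hookrightarrow\mathcal{X}_{n+1}$. Applying \Cref{coro: base change for dualising complex} to the Cartesian square
\[
\begin{tikzcd}
\mathcal{X}_n \arrow[r, "j_n"] \arrow[d, "f_n"] & \mathcal{X}_{n+1}\arrow[d, "f_{n+1}"]\\
S_n \arrow[r, hook] & S_{n+1}
\end{tikzcd}
\]
yields $\omega_{f_n}^{\bullet}\cong \mathbf{L}j_n^{\ast}\omega_{f_{n+1}}^{\bullet}$. Here is the delicate step: $j_n$ is generally not flat, so one cannot replace $\mathbf{L}j_n^{\ast}$ by $j_n^{\ast}$ for arbitrary complexes. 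However, since $\omega_{f_{n+1}}$ is an invertible (hence locally free) sheaf, $\mathbf{L}j_n^{\ast}\omega_{f_{n+1}}=j_n^{\ast}\omega_{f_{n+1}}$. Thus $\omega_{f_n}[-d]\cong (j_n^{\ast}\omega_{f_{n+1}})[-d]$, which gives a canonical isomorphism $\omega_{f_n}\cong j_n^{\ast}\omega_{f_{n+1}}=\omega_{f_{n+1}}\otimes_{\mathcal{O}_{\mathcal{X}_{n+1}}}\mathcal{O}_{\mathcal{X}_n}$.

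These canonical base-change isomorphisms turn $\{\omega_{f_n}\}_{n\in\mathbb{N}}$ into a projective system of invertible sheaves on the thickenings $\{\mathcal{X}_n\}$, so \Cref{teo: locally free sheaves on each nilpotent subscheme induce a locally free sheaf on the completion} produces an invertible formal sheaf $\mathfrak{L}:=\varprojlim_n\omega_{f_n}$ on $\mathfrak{X}$ with $\mathfrak{L}\otimes_{\mathcal{O}_{\mathfrak{X}}}\mathcal{O}_{\mathcal{X}_n}\cong\omega_{f_n}$, hence in particular $\mathfrak{L}\otimes_{\mathcal{O}_{\mathfrak{X}}}\mathcal{O}_X\cong\omega_X$. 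Uniqueness is automatic from the equivalence between formal coherent sheaves and compatible systems of coherent sheaves on the thickenings (see \Cref{remark: formal coherent sheaf as collection of coherent sheaves on the thickenings}): any second such $\mathfrak{L}'$ corresponds to a compatible system isomorphic to $\{\omega_{f_n}\}$, and these isomorphisms glue to $\mathfrak{L}\cong\mathfrak{L}'$. The ``in particular'' is the very first step above: $f_n$ is Gorenstein by \Cref{prop: infinitesimal deformation of Gorenstein scheme is Gorenstein}. The main obstacle is handling the non-flatness of the thickening $j_n$ in the base-change step, which is resolved precisely by the invertibility of $\omega_{f_{n+1}}$.
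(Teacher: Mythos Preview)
Your proof is correct and follows essentially the same approach as the paper: both reduce to the compatible system $\{f_n\}$, invoke \Cref{prop: infinitesimal deformation of Gorenstein scheme is Gorenstein} and \Cref{prop: existence of the relative dualising sheaf} to get invertible $\omega_{f_n}$, use \Cref{coro: base change for dualising complex} for the compatibility $j_n^{\ast}\omega_{f_{n+1}}\cong\omega_{f_n}$, and glue via \Cref{teo: locally free sheaves on each nilpotent subscheme induce a locally free sheaf on the completion}. Your handling of $\mathbf{L}j_n^{\ast}$ via local freeness of $\omega_{f_{n+1}}$ is slightly more direct than the paper's equivalent maneuver of taking $\coho^{-\dim X}$, and you additionally spell out the uniqueness argument, which the paper leaves implicit.
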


\begin{proof}
By \Cref{prop: formal deformation as compatible collection of deformations}, the formal deformation $\mathfrak{f}$ is equivalent to a collection of deformations $\left\{f_n\colon\mathcal{X}_n\to S_n\right\}_{n\in\mathbb{N}}$ satisfying the compatibility condition of \Cref{eq: compatibility of deformation to induced a formal deformation}, with $f_n$ flat, proper morphisms. Since $X$ is Gorenstein, applying \Cref{prop: infinitesimal deformation of Gorenstein scheme is Gorenstein} we deduce that for every natural number $n$, the morphism $f_n$ is Gorenstein. Now consider the following Cartesian diagram
\[
\begin{tikzcd}
\mathcal{X}_n \arrow[d, "f_n"] \arrow[r, hook, "j_{n}"] & \mathcal{X}_{n+1} \arrow[d, "f_{n+1}"]\\
S_n \arrow[r, hook] & S_{n+1};
\end{tikzcd}
\]
we have, for every natural number $n$, the following chain of equalities and natural isomorphisms
\[
\begin{aligned}
j_{n}^*\omega_{f_{n+1}}&=\coho^{-\dim X}(j_n^*\omega_{f_{n+1}}[-\dim X])\,\,\,\,\,(\text{\Cref{prop: existence of the relative dualising sheaf}})\\
&=\coho^{-\dim X}(\mathbf{L}j_n^*\omega_{f_{n+1}}^{\bullet})\\
&\cong\coho^{-\dim X}(\omega_{f_{n}}^{\bullet})\,\,\,\,\,\,\,\,\,\,\,\,\,\,\,\,\,\,\,\,\,\,\,\,\,\,\,\,\,\,\,\,\,\,\,\,\,\,\,\,(\text{\Cref{coro: base change for dualising complex}})\\
&=\coho^{-\dim X}(\omega_{f_n}[-\dim X])\,\,\,\,\,\,\,\,\,\,\,\,\,\,\,(\text{\Cref{prop: existence of the relative dualising sheaf}})\\
&=\omega_{f_n}.
\end{aligned}
\]
\Cref{teo: locally free sheaves on each nilpotent subscheme induce a locally free sheaf on the completion} then implies that there exists an invertible formal sheaf $\mathfrak{L}$ on $\mathfrak{X}$ such that $\mathfrak{L}\otimes_{\mathcal{O}_{\mathfrak{X}}}\mathcal{O}_{X}\cong\omega_X$.
\end{proof}

As a consequence of this last proposition, we get that if $X$ is a proper, local complete intersection scheme over a field $k$ and we have a formal deformation $\mathfrak{f}\colon\mathfrak{X}\to\Spf k\llbracket t\rrbracket$, then the relative dualising sheaf $\omega_X$ always extends to the formal deformation $\mathfrak{f}$. To see this, it is enough to observe that l.c.i. schemes/morphisms are in particular Gorenstein schemes/morphisms and then apply the previous proposition.

This result for l.c.i. schemes can be achieved only by using properties of the naive cotangent complex; this second way is described in length in \cite{nobile2021}.

%%%%%%%%%%%%%%%%%%%%%%
%%%%%%%%%%%%%%%%%%%%%%
\section{From formal smoothing to geometric smoothing}
%%%%%%%%%%%%%%%%%%%%%%
%%%%%%%%%%%%%%%%%%%%%%
In this last section we use all the previous results to show how pass from a formal smoothing to a geometric one. We start by recalling the definition of geometric smoothing.

\begin{definition}\label{def: geometric smoothing}
Let $X$ be a proper scheme. A \emph{geometric smoothing} is a Cartesian diagram
\begin{equation}\label{eq: geometric smoothing diagram}
\begin{tikzcd}
X \arrow[d]\arrow[r, hook] & \mathcal{X} \arrow[d, "\pi"]\\
\Spec k = \Spec \frac{\mathcal{O}_{c, C}}{\mathfrak{m}_{c}} \arrow[r, hook] & C
\end{tikzcd}
\end{equation}
where $C$ is a smooth curve, $c\in C$ is a closed point and $\pi$ is a flat and proper morphism, such that $\pi^{-1}(\eta_C)=:\mathcal{X}_{\text{gen}}$ is smooth, where $\eta_C$ is the generic point of $C$. We say that $X$ is \emph{geometrically smoothable} if it has a geometric smoothing.
\end{definition}

We remark that, if $X$ is smooth over $\Spec k$, then $X$ is geometrically smoothable in a trivial way by considering the trivial family $\text{pr}_2\colon X\times_k C\to C$ of deformations.

We now present some results that will be needed in the proof of the main theorem.

\begin{lemma}[{\cite[Lemma 7.2.1 page 87]{kempf_1993}}]\label{lemma: existence of a curve}
Let $X$ be a scheme, let $U$ be an open, dense subset of $X$ and let $p\in X$ be a closed point. Then there exists an affine curve $C$ in $X$ such that $C$ intersects $U$ and passes through $p$.
\end{lemma}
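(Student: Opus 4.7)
The plan is to reduce the problem to an affine ambient, embed into $\mathbb{A}^N$, and then iteratively cut with a generic hyperplane via a Bertini-style dimension count until what remains is one-dimensional. The statement is local near $p$, so I would first replace $X$ by an affine open neighbourhood $V$ of $p$ and $U$ by $U\cap V$. Any affine curve $C\subseteq V$ through $p$ meeting $U\cap V$ is \emph{a fortiori} an affine curve in $X$ meeting $U$. The intersection $U\cap V$ remains dense in $V$: indeed, any nonempty open $W\subseteq V$ is also open in $X$, and hence meets the dense set $U$. Now fix a closed embedding $V\hookrightarrow\mathbb{A}^N$ and write $n:=\dim_{p}V$.

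Next I would argue by induction on $n$. The base case $n\le 1$ is immediate: take $C$ to be an irreducible component of $V$ passing through $p$ and meeting $U$; at least one such component exists, since components of $V$ meeting $U$ are exactly those whose generic point lies in $U$, and their union is all of $V$ by density. For the inductive step $n\ge 2$, the plan is to find a hyperplane $H\subseteq\mathbb{A}^N$ through $p$ such that the triple $(V\cap H,\, U\cap H,\, p)$ still satisfies the hypotheses of the lemma, with $\dim_{p}(V\cap H)=n-1$, and then to apply the inductive hypothesis to this triple.

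The hyperplanes $H\subseteq\mathbb{A}^N$ passing through $p$ are parameterized by a $\mathbb{P}^{N-1}$, namely the projectivization of the space of linear forms vanishing at $p$. A generic such $H$ should have two properties: (i) $H$ contains no irreducible component of $V$ through $p$, so that $V\cap H$ has dimension exactly $n-1$ near $p$; and (ii) $H$ contains no positive-dimensional irreducible component of $V\setminus U$, so that $(V\setminus U)\cap H$ has dimension strictly less than $\dim_{p}(V\cap H)=n-1$ and consequently $U\cap H$ is dense in $V\cap H$. Each condition defines a nonempty Zariski open subset of the parameter space, so their intersection is nonempty. The inductive hypothesis applied to $(V\cap H,\, U\cap H,\, p)$ then produces an affine curve through $p$ inside $V\cap H\subseteq V\subseteq X$ meeting $U$, completing the induction.

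The main obstacle I expect is organising the dimension count in (ii) when $V$ and $V\setminus U$ are reducible: one must check that no irreducible component $Z$ of $V\setminus U$ is forced to lie inside every hyperplane through $p$. This reduces to the harmless fact that a closed subvariety of $\mathbb{A}^N$ is contained in every hyperplane through $p$ if and only if it equals $\{p\}$, and components equal to $\{p\}$ do not obstruct the density of $U\cap H$ in $V\cap H$. Once this standard Bertini-type check is in place, the generic-$H$ argument and the induction run through cleanly.
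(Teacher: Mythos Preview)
The paper does not prove this lemma: it is quoted verbatim from Kempf's textbook \cite{kempf_1993} and used as a black box in the proof of \Cref{main result}. Your outline---reduce to an affine neighbourhood of $p$, embed in $\mathbb{A}^N$, and iteratively intersect with a generic hyperplane through $p$ until the dimension drops to one---is precisely the classical argument, and is essentially how Kempf proceeds.

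One small point worth tightening: your condition (ii) as phrased (``$H$ contains no positive-dimensional component of $V\setminus U$'') does not by itself guarantee that $U\cap H$ is dense in $V\cap H$ when $V$ is reducible with components of different dimensions, since a low-dimensional component of $V\cap H$ could still land entirely inside $(V\setminus U)\cap H$. The simplest repair is to replace $V$ at the outset by a single irreducible component through $p$ (with its reduced structure); such a component still meets $U$ by density, it is affine as a closed subscheme of $V$, and now $V\setminus U$ has dimension at most $n-1$. Then (ii) forces $\dim\bigl((V\setminus U)\cap H\bigr)\le n-2$, while every component of $V\cap H$ has dimension exactly $n-1$ by Krull's Hauptidealsatz, so density follows. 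With this adjustment your induction is clean.
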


\begin{remark}\label{remark: completion of local ring at a curve}
Let $C$ be a smooth curve over $k$ and let $c\in C(k)$ be a closed point. Denote by $l$ a local parameter of the maximal ideal $\mathfrak{m}_{c}$ in $\mathcal{O}_{C,c}$. Then there is a isomorphism of topological rings
\[
\widehat{\mathcal{O}_{C,c}}\cong k\llbracket t\rrbracket
\]
such that $l$ is sent to $t$.
\end{remark}

\begin{proposition}\label{prop: morphism from an irreducible and reduced scheme factors trough an irreducible and reduced component of the target}
Let $f\colon X\to Y$ be a morphism of schemes such that $X$ is reduced and irreducible. Then there exists an irreducible and reduced component $Y'$ of $Y$ such that $f$ factors trough $Y'$, i.e. the following diagram commutes
\[
\begin{tikzcd}
X\arrow[rr, "f"] \arrow[rd] & & Y\\
& Y' \arrow[ru, hook]
\end{tikzcd}.
\]
\end{proposition}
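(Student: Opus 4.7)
The plan is to exploit the topological fact that the continuous image of an irreducible space is irreducible, together with the universal property of the reduced induced subscheme structure.

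First I would observe that, since $X$ is irreducible, the image $f(X)\subset Y$ is an irreducible subset of $Y$, and hence so is its closure $\overline{f(X)}$. Every irreducible closed subset of $Y$ is contained in some irreducible component of $Y$: indeed, if $\{Z_{i}\}_{i\in I}$ are the irreducible components of $Y$, then $\overline{f(X)}=\bigcup_{i\in I}\left(\overline{f(X)}\cap Z_{i}\right)$ expresses $\overline{f(X)}$ as a union of closed subsets, and by irreducibility one of the terms must equal $\overline{f(X)}$. Fix such a component $Z$ and endow it with its reduced induced closed subscheme structure; call the resulting closed subscheme $Y'\hookrightarrow Y$.

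Next I would invoke the universal property of the reduced induced subscheme structure (see for instance \cite[\href{https://stacks.math.columbia.edu/tag/01J4}{Tag 01J4}]{stacks-project}): if $Z\subset Y$ is a closed subset and $g\colon W\to Y$ is a morphism from a reduced scheme $W$ whose image is set-theoretically contained in $Z$, then $g$ factors uniquely through the reduced induced subscheme structure on $Z$. Applying this with $W=X$ (which is reduced by hypothesis) and $g=f$, together with the set-theoretic inclusion $f(X)\subset Z$ established above, yields a unique morphism $X\to Y'$ whose composition with $Y'\hookrightarrow Y$ equals $f$. This exhibits $Y'$ as an irreducible and reduced component of $Y$ through which $f$ factors, as required.

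The argument is essentially formal; the only step deserving care is the second one, namely verifying the universal property of the reduced induced subscheme structure in sufficient generality (i.e.\ without additional Noetherian or separation hypotheses on $Y$). Under the global conventions of \Cref{conventions}, however, all schemes are of finite type and separated over $k$, so this is immediate, and no serious obstacle arises.
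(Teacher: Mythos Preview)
Your proof is correct and follows essentially the same approach as the paper: observe that $f(X)$ is irreducible, pass to an irreducible component of $Y$ containing its closure, and use the reduced induced subscheme structure (together with $X$ being reduced) to obtain the factorization. Your version is in fact slightly more careful than the paper's, which sets $Y':=\overline{f(X)}$ and calls this an irreducible component rather than choosing a component containing it; but the underlying idea is identical.
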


\begin{proof}
Since $X$ is irreducible, by \cite[\href{https://stacks.math.columbia.edu/tag/0379}{Tag 0379}]{stacks-project}, $f(X)$ is an irreducible subset of $Y$. Then $Y':=\overline{f(X)}$ is an irreducible component of $Y$ and $f$ factors through $Y'$ by construction. By \cite[II-Ex. 2.3(c)]{hartshorne1977algebraic}, we can always assume $Y'$ to be a reduced scheme.
\end{proof}

\begin{notation}
From now on, we will denote by $\mathfrak{S}$ the formal scheme $\Spf k\llbracket t\rrbracket$ and by $S$ the scheme $\Spec k\llbracket t\rrbracket$. Moreover, for any non-negative integer $n$, we denote by $S_n$ the scheme $\Spec \frac{k\llbracket t \rrbracket}{(t^{n+1})}$.
\end{notation}

The next lemma shows that geometrical smoothability implies formal smoothability.

\begin{lemma}\label{lem: geometric smoothing implies formal smoothing}
Let $X$ be a projective, equidimensional scheme. If $X$ is geometrically smoothable, then it is also formally smoothable.
\end{lemma}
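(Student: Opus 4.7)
The plan is to localise the geometric smoothing at its distinguished closed point to obtain a smoothing over the DVR $\mathcal{O}_{C,c}$, pass to formal completions, and then invoke Tziolas' \Cref{Tziolas: smoothing over dvr is the same as formal smoothing} directly.

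Concretely, starting from a geometric smoothing $\pi\colon\mathcal{X}\to C$ with $\pi^{-1}(c)=X$, I would first pull back $\pi$ along the flat localisation morphism $\Spec R\hookrightarrow C$, where $R:=\mathcal{O}_{C,c}$ is a discrete valuation ring because $C$ is a smooth curve. The resulting morphism $\pi_R\colon\mathcal{X}_R\to\Spec R$ is flat and proper by base change, has special fibre $X$, and its generic fibre agrees with $\mathcal{X}_{\text{gen}}$ since the generic point of $\Spec R$ is identified with $\eta_C$. In particular this generic fibre is smooth, so $\pi_R$ is a smoothing of $X$ over the DVR $R$ in the sense of \Cref{def: smoothing}.

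Next I would pass to formal completions. By \Cref{remark: completion of local ring at a curve} there is an isomorphism $\widehat{R}\cong k\llbracket t\rrbracket$, so $\Spf\widehat{R}=\mathfrak{S}$, and the construction of \Cref{remark: constructing formal deformation from classic deformation} associates to $\pi_R$ a formal deformation $\mathfrak{f}\colon\mathfrak{X}\to\mathfrak{S}$ of $X$. The hypotheses of \Cref{Tziolas: smoothing over dvr is the same as formal smoothing} are then all fulfilled: $X$ is proper (since it is projective) and equidimensional by assumption, $R$ is a $k$-algebra which is a DVR, and $\pi_R$ is a smoothing. The proposition therefore yields that $\mathfrak{f}$ is a formal smoothing of $X$, establishing formal smoothability.

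I do not anticipate a serious obstacle: the argument is essentially a direct invocation of Tziolas' proposition, which was precisely designed to bridge the two notions of smoothing. The only point that warrants explicit checking is that base change along the localisation $\Spec\mathcal{O}_{C,c}\hookrightarrow C$ preserves the smoothness of the generic fibre, and this is immediate from the identification of the generic points of $\Spec R$ and of $C$.
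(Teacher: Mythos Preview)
Your proposal is correct and follows essentially the same route as the paper: localise the geometric smoothing at $c$ to land over a DVR, form the associated formal deformation over $\mathfrak{S}$ via \Cref{remark: completion of local ring at a curve} and \Cref{remark: constructing formal deformation from classic deformation}, and conclude with \Cref{Tziolas: smoothing over dvr is the same as formal smoothing}. The only cosmetic difference is that the paper pulls back directly along $\Spec\widehat{\mathcal{O}_{C,c}}\to C$ rather than stopping at $\Spec\mathcal{O}_{C,c}$, but this does not affect the argument.
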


\begin{proof}
Suppose $X$ has a geometric smoothing like \cref{eq: geometric smoothing diagram}, where $c$ is the closed point of $C$ such that the fibre of $\pi$ over $c$ is $X$. Consider the pullback $\tilde{\pi}$ of $\pi$ along the composite morphism $\Spec\widehat{\mathcal{O}_{C,c}}\to\Spec\mathcal{O}_{C,c}\to C$; since $\pi$ is a smoothing of $X$, so is $\tilde{\pi}$. By \Cref{remark: completion of local ring at a curve} we have that the completion of the regular local ring $\mathcal{O}_{C,c}$ is continuously isomorphic to $\mathfrak{S}$. Now using \Cref{remark: constructing formal deformation from classic deformation}, we can construct the associated formal deformation $\mathfrak{p}\colon\mathfrak{X}\to\mathfrak{S}$. We end the argument by invoking \Cref{Tziolas: smoothing over dvr is the same as formal smoothing}.
\end{proof}

At this point we are ready to restate and prove our main result.

\begin{theorem}\label{main result}
Let $X$ be a projective, equidimensional scheme such that one of the following hypotheses hold:
\begin{enumerate}
\item $\coho^2(X,\mathcal{O}_X)=0$,
\item if $X$ Gorenstein, then either the dualising sheaf $\omega_X$ or its dual $\omega_X^{\vee}$ is ample.
\end{enumerate}
Then $X$ is formally smoothable if and only if $X$ is geometrically smoothable.
\end{theorem}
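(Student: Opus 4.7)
The direction that geometric smoothability implies formal smoothability is \Cref{lem: geometric smoothing implies formal smoothing}, so it remains to prove the converse under either hypothesis. My plan has two stages: first algebraise the formal smoothing to a projective deformation over $\Spec k\llbracket t\rrbracket$, then spread this out to a family over a smooth affine curve using a Hilbert scheme argument.

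For the algebraisation stage, let $\mathfrak{f}\colon\mathfrak{X}\to\mathfrak{S}$ be a formal smoothing. Under hypothesis~(1), \Cref{theorem: effectivisation of deformation II} applies directly, since $\mathfrak{f}$ is flat and proper, $X$ is projective, and $\h^2(X,\mathcal{O}_X)=0$; this yields a projective algebraisation $f\colon\mathcal{X}\to\Spec k\llbracket t\rrbracket$. Under hypothesis~(2), \Cref{dualising sheaf of a Gorenstein proper scheme always extends} extends $\omega_X$ (and dually $\omega_X^\vee$) to an invertible formal sheaf $\mathfrak{L}$ on $\mathfrak{X}$; whichever of $\omega_X,\omega_X^{\vee}$ is ample, the corresponding formal sheaf has ample restriction to $X$, so \Cref{Gro: algebrization theorem} provides a projective algebraisation $f\colon\mathcal{X}\to\Spec k\llbracket t\rrbracket$ together with a relatively ample line bundle. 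In both cases, \Cref{Tziolas: smoothing over dvr is the same as formal smoothing} ensures that $f$ is a smoothing in the sense of \Cref{def: smoothing}, i.e.\ its generic fibre is smooth.

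For the spreading-out stage, fix a relatively very ample line bundle on $\mathcal{X}/\Spec k\llbracket t\rrbracket$ and use it to embed $\mathcal{X}$ as a closed subscheme of $\mathbb{P}^N_{k\llbracket t\rrbracket}$ with Hilbert polynomial $P$. This corresponds to a morphism $\phi\colon\Spec k\llbracket t\rrbracket\to\mathrm{Hilb}^P(\mathbb{P}^N_k)$ sending the closed point to $[X]$. Since $\Spec k\llbracket t\rrbracket$ is irreducible and reduced, \Cref{prop: morphism from an irreducible and reduced scheme factors trough an irreducible and reduced component of the target} lets me factor $\phi$ through an irreducible reduced component $H$ of the Hilbert scheme. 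Let $U\subset H$ be the open locus where the (restriction of the) universal family has smooth fibres. The image of the generic point of $\Spec k\llbracket t\rrbracket$ under $\phi$ lies in $U$ (the generic fibre of $f$ being smooth), so $U$ is non-empty; since $H$ is irreducible, $U$ is dense. \Cref{lemma: existence of a curve}, applied to $H$, $U$ and the closed point $[X]$, yields an affine curve $C\subset H$ passing through $[X]$ and meeting $U$. Normalising $C$ yields a smooth affine curve $\tilde{C}$; pick a preimage $\tilde{c}$ of $[X]$, and pull back the universal family along $\tilde{C}\to C\hookrightarrow H$ to get a flat proper family $\tilde{\mathcal{X}}\to\tilde{C}$ whose fibre over $\tilde{c}$ is $X$ and whose generic fibre is smooth, i.e.\ a geometric smoothing of $X$.

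The main obstacle is the algebraisation stage, where the two hypotheses are handled by different routes: case~(1) is a direct cohomological invocation of the effectivisation theorem, whereas case~(2) requires the preparatory construction of an ample formal line bundle on $\mathfrak{X}$ from the Gorenstein hypothesis, which is precisely the role of \Cref{dualising sheaf of a Gorenstein proper scheme always extends}. The spreading-out step is technically clean once one restricts to an irreducible component of the Hilbert scheme so that the non-empty smooth-fibre locus is automatically dense, allowing Kempf's lemma to produce the required curve.
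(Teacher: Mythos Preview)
Your proposal is correct and follows essentially the same route as the paper: algebraise the formal smoothing to a projective deformation over $\Spec k\llbracket t\rrbracket$ (via \Cref{theorem: effectivisation of deformation II} in case~(1) and via \Cref{dualising sheaf of a Gorenstein proper scheme always extends} plus \Cref{Gro: algebrization theorem} in case~(2)), invoke \Cref{Tziolas: smoothing over dvr is the same as formal smoothing}, then pass to the Hilbert scheme, factor through an irreducible component, and use Kempf's lemma plus normalisation to produce the geometric smoothing. The only cosmetic difference is that the paper phrases the density step via the closure $\overline{Y_{\text{smooth}}}$, whereas you observe directly that the smooth-fibre locus is dense in the chosen irreducible component; these are equivalent.
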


\begin{proof}
One implication is proved in \Cref{lem: geometric smoothing implies formal smoothing}

Suppose we are given a formal smoothing $\mathfrak{p}\colon\mathfrak{X}\to\mathfrak{S}$. Now,
\begin{enumerate}
\item if $\coho^2(X, \mathcal{O}_X)=0$, then by \cite[Theorem~2.5.13]{sernesi2007deformations}, we get that every formal deformation of $X$ is effective; that is to say that there exists a deformation of schemes $p\colon\mathcal{X}\to S$ such that $\mathfrak{X}\cong\hat{\mathcal{X}}_{/X}$. In particular, from the proof, we also deduce that the morphism $p$ is projective.
    
\item By \Cref{dualising sheaf of a Gorenstein proper scheme always extends} the dualising sheaf $\omega_X$ (or $\omega_X^{\vee}$) extends to an invertible formal sheaf $\mathfrak{L}$ on the formal scheme $\mathfrak{X}$. \Cref{Gro: algebrization theorem} then gives us a deformation $p\colon\mathcal{X}\to S$ of $X$ such that the completion of $\mathcal{X}$ along the central fibre is $\mathfrak{X}$. Moreover, as bonus point of the aforementioned theorem, we deduce that $\mathcal{X}$ is projective over $S$.
\end{enumerate}
Concluding, from either hypothesis, if we start with a formal deformation
\[
\begin{tikzcd}
X \arrow[r, hook] \arrow[d] & \mathfrak{X} \arrow[d, "\mathfrak{p}"]\\
\Spf k \arrow[r, hook] & \mathfrak{S}
\end{tikzcd}
\]
then we can construct a deformation of schemes
\begin{equation}\label{eq: deformazione di schemi}
\begin{tikzcd}
X \arrow[d]\arrow[r, hook] & \mathcal{X} \arrow[d, "p"]\\
\Spec k \arrow[r, hook] & S
\end{tikzcd}
\end{equation}
such that $\mathfrak{X}\cong\hat{\mathcal{X}}_{/X}$. Since $\mathfrak{p}$ is assumed to be a formal smoothing and since $k\llbracket t\rrbracket$ is a DVR, we use \Cref{Tziolas: smoothing over dvr is the same as formal smoothing} to conclude that \cref{eq: deformazione di schemi} is a smoothing of $X$. Moreover, in \cref{eq: deformazione di schemi}, the scheme $\mathcal{X}$ is projective over $S$; i.e. there is a non-negative integer $d$ such that $p$ factors as a closed embedding $\iota\colon\mathcal{X}\hookrightarrow\mathbb{P}^d_S=S\times_k\mathbb{P}^d_k$ followed by the first projection $\text{pr}_1\colon\mathbb{P}^d_S\to S$.

Now we use the fact that the Hilbert functor $\mathfrak{Hilb}_{\mathbb{P}^d}$ is representable to deduce the existence of an isomorphism
\[
\alpha_{S}\colon\mathfrak{Hilb}_{\mathbb{P}^d}(S)\to\Hom_{(\text{Sch})}(S,\text{Hilb}_{\mathbb{P}^d}):=\text{h}_{\text{Hilb}_{\mathbb{P}^d}}(S).
\]
Therefore there exists a unique morphism $\psi\colon S\to\text{Hilb}_{\mathbb{P}^d}$ such that both the following diagrams are Cartesian
\[
\begin{tikzcd}
\mathcal{X} \arrow[rr, bend left, "p"] \arrow[r, hook, "\iota"] \arrow[d, "(\id\times\psi)|_{\mathcal{X}}"] & S\times_k\mathbb{P}^d_k \arrow[r, "\text{pr}_1"] \arrow[d, "\psi\times\id"] & S \arrow[d, "\psi"]\\
\text{Univ}_{\mathbb{P}^d} \arrow[rr, bend right, "\text{pr}_1"] \arrow[r, hook] & \text{Hilb}_{\mathbb{P}^d}\times_k\mathbb{P}^d_k \arrow[r, "\text{pr}_1"]& \text{Hilb}_{\mathbb{P}^d}
\end{tikzcd}.
\]
Recall that $\text{Univ}_{\mathbb{P}^d}$ is by definition a closed subscheme of $\mathbb{P}^d_k\times_k\text{Hilb}_{\mathbb{P}^d}$. Inside the Hilbert scheme we consider the smooth locus, defined as follows
\[
\text{H}_{\text{smooth}}:=\left\{[Z]\in\text{Hilb}_{\mathbb{P}^d}(\Spec k)\colon Z\text{ is smooth }\right\}
\]
By \cite[\href{https://stacks.math.columbia.edu/tag/01V5}{Tag 01V5}]{stacks-project}, $\text{H}_{\text{smooth}}$ is an open subset of the Hilbert scheme $\text{Hilb}_{\mathbb{P}^d}$.

Now we study the map $\psi\colon S\to\text{Hilb}_{\mathbb{P}^d}$. 
To do so, we first observe that, since $k\llbracket t\rrbracket$ is a DVR, its spectrum $S$ is made of two points: the closed point, $q$, and the generic point, $\eta$. According to our results so far we have that
\begin{itemize}
\item $\psi(\eta)=[\mathcal{X}_{\text{gen}}]\in\text{H}_{\text{smooth}}$, since (\cref{eq: deformazione di schemi}) is a smoothing;
\item $\psi(q)=[X]\in\text{Hilb}_{\mathbb{P}^d}\setminus\text{H}_{\text{smooth}}$, since $X$ was singular.
\end{itemize}
Since $S$ is connected, there exists a polynomial $\Phi\in\mathbb{Q}[m]$ such that the image of $\psi$ is contained in the connected component $\text{Hilb}_{\mathbb{P}^d}^{\Phi}$ of the Hilbert scheme. By \Cref{prop: morphism from an irreducible and reduced scheme factors trough an irreducible and reduced component of the target} there exists a reduced, irreducible component $Y$ of $\text{Hilb}^{\Phi}_{\mathbb{P}^d}$ such that $\psi$ factors through it:
\[
\begin{tikzcd}
S \arrow[dr, "\widetilde{\psi}"] \arrow[rr, "\psi"] && \text{Hilb}_{\mathbb{P}^d}^{\Phi}\\
& Y \arrow[ur, hook, "i"] &
\end{tikzcd}.
\]
Observe now that if we define $Y_{\text{smooth}}:=Y\cap\text{H}_{\text{smooth}}$ and denote $\overline{Y_{\text{smooth}}}$ the schematic closure of $Y_{\text{smooth}}$, then $\widetilde{\psi}(\eta)\in Y_{\text{smooth}}$ and $\widetilde{\psi}(q)\in\overline{Y_{\text{smooth}}}$. Since $Y_{\text{smooth}}$ is a non-empty open, and therefore dense, subset of $\overline{Y_{\text{smooth}}}$ and $\widetilde{\psi}(q)\in\overline{Y_{\text{smooth}}}$, then we can apply \Cref{lemma: existence of a curve} concluding that there exists a curve $C$ inside $\overline{Y_{\text{smooth}}}$ such that $\widetilde{\psi}(q)\in C$ and $C\cap Y_{\text{smooth}}\neq\emptyset$.

Now let $\nu\colon\tilde{C}\to C$ be the normalisation morphism, and $\tilde{p}\colon\widetilde{\mathcal{X}}\to\widetilde{C}$ be the pullback under the normalisation morphism $\nu$ of the universal family over $Y$. Since $\nu$ is surjective, let $\tilde{c}\in\tilde{C}$ be such that $\nu(\tilde{c})=\widetilde{\psi}(q)$. This completes the proof since we have that the fibre $\tilde{p}^{-1}(\tilde{c})$ is isomorphic to $X$ and $\widetilde{\mathcal{X}}$ is smooth.
\end{proof}

%%%%%%%%%%%%%%%%%%%%%%
%%%%%%%%%%%%%%%%%%%%%%
\subsection{Applications of the theorem}
%%%%%%%%%%%%%%%%%%%%%%
%%%%%%%%%%%%%%%%%%%%%%
In this section we present an application of our result: smoothability of local complete intersection schemes. We start by recalling the definitions of local complete intersection (l.c.i.) schemes and of complete intersection morphisms.

\begin{definition}\label{def: lci morphism}
Let $f\colon X\to Y$ be a morphism of schemes. We say that $f$ is a local complete intersection morphism, or l.c.i. morphism for short, if it is of finite type and for every point $x\in X$ there are an open neighbourhood $x\in U\subset X$, a scheme $P$ together with a regular immersion $i\colon U\to P$, a smooth morphism of finite type $s\colon P\to Y$ such that $f|_{U}=s\circ i$. We say that a $k$-scheme $X$ is a l.c.i. scheme if the structure morphism $X\to\Spec k$ is a l.c.i. morphism.
\end{definition}

The first remark is that the definition of l.c.i. morphisms does not depend on the factorisation chosen, see \cite[\href{https://stacks.math.columbia.edu/tag/069E}{Tag 069E}]{stacks-project}.

Moreover, if $f\colon X\to Y$ is any morphism of schemes, then the locus $X_{\text{l.c.i.}}$ of points of $X$ such that $f$ is a l.c.i. morphism at $x$, is open in $X$. If we further assume that $f$ is proper, then the locus of points
\[
Y_{\text{l.c.i.}}:=\{y\in Y\colon f\text{ is l.c.i. at }x, \forall x\in f^{-1}(y)\}
\]
is open in $Y$.

\begin{definition}\label{def: complete intersection morphism}
We say that the morphism $f\colon X\to Y$ is a complete intersection morphism if there exists a scheme $P$ together with a global factorisation $s\circ i$ of $f$, with $i\colon X\to P$ a regular immersion and $s\colon P\to Y$ a smooth morphism. We also say that a scheme $X$ is a complete intersection scheme if the structure morphism $X\to\Spec k$ is a complete intersection.
\end{definition}

We now present a theorem of Tziolas \cite[Theorem~12.5]{tziolas2010smoothings} which gives a sufficient condition for the existence of a formal smoothing. We start by introducing the following notation.

\begin{notation}\label{notation: formal neighbourhood Schlessinger sheaf and tangent sheaf}
Let $f\colon X\to Y$ be a morphism of schemes. We denote the relative tangent sheaf by 
$\mathcal{T}_{X/Y}:=\shom_{\mathcal{O}_X}(\Omega^1_{X/Y},\mathcal{O}_X)$ and for $i\in\mathbb{N}$, the \emph{$i^{\text{th}}$ relative cotangent sheaf}\index{$i^{\text{th}}$ Schlessinger's relative cotangent sheaf} in the sense of Schelessinger, see  \cite{LiS1967}, by $\mathcal{T}^i_{X/Y}:=\sext_{\mathcal{O}_X}^{i}(\Omega^1_{X/Y},\mathcal{O}_X)$. In case $Y$ is the spectrum of the ground field $k$, we let $\mathcal{T}_X:=\mathcal{T}_{X/k}$ and $\mathcal{T}^{i}_X:=\mathcal{T}^{i}_{X/k}$ be the tangent sheaf and the $i^{\text{th}}$ cotangent sheaf respectively.
\end{notation}

\begin{theorem}[{\cite[Theorem~12.5]{tziolas2010smoothings}}]\label{Tziolas: existence of formal smoothing}
Let $X$ be a proper, reduced, pure dimensional scheme. If the following conditions hold
\begin{itemize}
\item[(\text{a})] $X$ has complete intersection singularities;
\item[(\text{b})] $\coho^2(X,\mathcal{T}_X)=0$;
\item[(\text{c})] $\coho^1(X,\mathcal{T}^1_X)=0$;
\item[(\text{d})] $\mathcal{T}^1_X$ is finitely generated by its global sections;
\end{itemize}
then $X$ is formally smoothable, i.e. it admits a formal smoothing.
\end{theorem}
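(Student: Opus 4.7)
The strategy is to construct a formal deformation $\mathfrak{p}\colon\mathfrak{X}\to\mathfrak{S}$ whose first-order truncation is a smoothing at every singular point of $X$, and then to verify that this forces the Fitting-ideal bound in \Cref{def: formal smoothing}. The central tool is the local-to-global spectral sequence for Schlessinger's sheaves,
\[
E_2^{p,q}=\coho^p(X,\T^q_X)\Rightarrow T^{p+q}_X,
\]
combined with the vanishings (b), (c) and the l.c.i.\ hypothesis (a), which forces $\T^i_X=0$ for $i\geq 2$.

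First I would construct the first-order smoothing. From $\coho^2(X,\T_X)=0$, the spectral sequence yields a surjection $T^1_X\twoheadrightarrow\coho^0(X,\T^1_X)$, so every global section of $\T^1_X$ lifts to a global first-order deformation of $X$. At each c.i.\ singular point $p$, the stalk $\T^1_{X,p}$ contains a non-empty Zariski open of smoothing directions — one may perturb a local c.i.\ presentation $(f_1,\dots,f_r)$ by generic $(g_1,\dots,g_r)$ so that the total space of $(f_i+tg_i)$ is smooth over $k(t)$. Since $\T^1_X$ is generated by its global sections by hypothesis (d) and the singular locus is finite (or at least one can localize the argument to a finite covering by germs), evaluation at the singular points is a surjection onto each stalk, so a generic global section of $\T^1_X$ lies in the smoothing open at each singular point simultaneously. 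Lifting such a section along $T^1_X\twoheadrightarrow\coho^0(X,\T^1_X)$ produces a first-order deformation $\xi_1\in T^1_X$ that is a local smoothing at every point of $X$.

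Next I would extend $\xi_1$ inductively to a compatible family $\{f_n\colon\mathcal{X}_n\to S_n\}_{n\geq0}$, which by \Cref{prop: formal deformation as compatible collection of deformations} assembles into a formal deformation $\mathfrak{p}\colon\mathfrak{X}\to\mathfrak{S}$. The obstruction to lifting from order $n$ to order $n+1$ lies in $T^2_X$. The spectral sequence filters $T^2_X$ by the graded pieces $\coho^2(X,\T_X)$, $\coho^1(X,\T^1_X)$, $\coho^0(X,\T^2_X)$; the first two vanish by hypotheses (b) and (c), and the third vanishes because a l.c.i.\ scheme has $\T^2_X=0$. Hence $T^2_X=0$, all obstructions vanish, and the tower of compatible deformations exists.

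The main obstacle is the final step: verifying that $\mathfrak{p}$ satisfies the Fitting-ideal condition of \Cref{def: formal smoothing}, namely $\mathfrak{I}^a\subset\Fitt_{\dim X}(\Omega^1_{\mathfrak{X}/\mathfrak{S}})$ for some $a$. Outside the singular locus of $X$, the morphism $\mathfrak{p}$ is smooth by openness of smoothness, so the Fitting ideal is the full structure sheaf. At a singular point $p$, a local c.i.\ presentation of $\hat{\mathcal{O}}_{\mathfrak{X},p}$ by $(F_1,\dots,F_r)$ in $k\llbracket x_1,\dots,x_n,t\rrbracket$ with $F_i\equiv f_i+tg_i\pmod{t^2}$ exhibits $\Fitt_{\dim X}(\Omega^1_{\mathfrak{X}/\mathfrak{S}})_p$ as the ideal of $r\times r$ minors of the Jacobian $(\partial F_i/\partial x_j)$; the smoothing-direction condition on $\xi_1$ translates into a non-degeneracy of the perturbed Jacobian showing that some maximal minor becomes a unit after inverting $t$, so a power $t^{a_p}$ lies in $\Fitt_{\dim X}(\Omega^1_{\mathfrak{X}/\mathfrak{S}})_p$. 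A cleaner, more conceptual alternative is to algebraize the local formal smoothing near $p$ to a genuine smoothing over a DVR (possible for an isolated c.i.\ singularity, where the local formal semi-universal family is itself algebraizable), and then invoke \Cref{Tziolas: smoothing over dvr is the same as formal smoothing} locally to transfer the Fitting bound back to $\mathfrak{p}$. By coherence of the Fitting ideal and quasi-compactness of the singular locus, a uniform exponent $a=\max_p a_p$ then exists, concluding the proof.
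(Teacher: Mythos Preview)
First, note that the paper does not prove this theorem: it is quoted from Tziolas and used as a black box, so there is no in-paper argument to compare your sketch against.

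Your skeleton is correct: hypotheses (a)--(c) fed into the local-to-global spectral sequence give $T^2_X=0$, so every first-order class extends unobstructed over $\mathfrak{S}$, and (b) yields the edge surjection $T^1_X\twoheadrightarrow\coho^0(X,\T^1_X)$. The genuine gap is that both your construction of the first-order smoothing and your Fitting-ideal verification tacitly assume isolated singularities. The theorem does not --- the paper's introduction stresses that Tziolas' criterion is aimed precisely at the non-isolated (e.g.\ semi-log-canonical) case. When $\mathrm{Sing}(X)$ has positive dimension, your claim that a \emph{single} generic section of $\T^1_X$ lands in the ``smoothing open'' of every stalk is an infinite intersection of open conditions and can genuinely fail: a globally generated sheaf on a positive-dimensional support need not admit a section avoiding a proper closed cone in each fibre (already $\mathcal{O}(1)$ on $\mathbb{P}^1$ has no nowhere-vanishing section). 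The parenthetical ``finite covering by germs'' does not repair this, and your cleaner alternative at the end explicitly invokes algebraisation of the local semi-universal family of an \emph{isolated} c.i.\ germ, which is unavailable here.

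The fix, and essentially Tziolas' route, is to use all of hypothesis (d) rather than one section: lift a generating set $s_1,\dots,s_N\in\coho^0(X,\T^1_X)$ to an unobstructed formal deformation over $\Spf k\llbracket t_1,\dots,t_N\rrbracket$. By construction this multi-parameter family is \emph{locally versal} at every point of $X$; since the versal deformation of any c.i.\ germ has smooth generic fibre, the relative Fitting ideal contains a power of the maximal ideal of the base. Restricting to a suitable one-parameter arc then produces the required formal smoothing over $\mathfrak{S}$. Reorganising your argument along these lines would close the gap.
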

As a corollary we would like to mention the following result that can be found in \cite[Corollary~12.9]{tziolas2010smoothings}.

\begin{corollary}\label{coro: lci scheme with normal sheaf satisfy Tziolas conditions}
Let $X$ be a projective, lci scheme such that there exists a regular embedding in a smooth scheme $Y$. If the normal sheaf $\mathcal{N}_{X/Y}$ is finitely generated by its global sections, $\coho^1(X,\mathcal{T}^1_X)=\coho^2(X,\mathcal{T}_X)=0$, then $X$ admits a formal smoothing.
\end{corollary}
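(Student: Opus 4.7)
The plan is to derive the corollary as a direct application of Tziolas' theorem (\Cref{Tziolas: existence of formal smoothing}), by verifying each of the hypotheses (a)--(d) of that theorem from the assumptions of the corollary. The assumption that $X$ is projective gives properness, and since $X$ is l.c.i.\ it is in particular Cohen--Macaulay and hence pure dimensional. Hypothesis (a) is simply the l.c.i.\ assumption, and hypotheses (b) and (c) on the cohomology vanishing of $\mathcal{T}_X$ and $\mathcal{T}^1_X$ are assumed. So the entire content of the proof is to verify hypothesis (d): that $\mathcal{T}^1_X$ is generated by its global sections.

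The key step is to exploit the regular embedding $X \hookrightarrow Y$ into a smooth ambient scheme to obtain an explicit presentation of $\mathcal{T}^1_X$ as a quotient of the normal sheaf. Since the embedding is regular, the conormal sheaf $\mathcal{I}/\mathcal{I}^2$ is locally free and the conormal sequence
\[
0 \to \mathcal{I}/\mathcal{I}^2 \to \Omega^1_Y|_X \to \Omega^1_X \to 0
\]
is short exact; moreover $\Omega^1_Y|_X$ is locally free because $Y$ is smooth. Applying $\shom_{\mathcal{O}_X}(-,\mathcal{O}_X)$ and using that $\sext^1_{\mathcal{O}_X}(\Omega^1_Y|_X,\mathcal{O}_X)=0$, I obtain the four-term exact sequence
\[
0 \to \mathcal{T}_X \to \mathcal{T}_Y|_X \to \mathcal{N}_{X/Y} \to \mathcal{T}^1_X \to 0.
\]
In particular there is a surjection $\mathcal{N}_{X/Y} \twoheadrightarrow \mathcal{T}^1_X$.

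Since by hypothesis $\mathcal{N}_{X/Y}$ is generated by its global sections, any surjective quotient of it inherits this property; hence $\mathcal{T}^1_X$ is globally generated, which establishes condition (d) of Tziolas' theorem. All four hypotheses being verified, \Cref{Tziolas: existence of formal smoothing} yields a formal smoothing of $X$, completing the proof.

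The main obstacle is really just the identification $\mathcal{T}^1_X \cong \coker(\mathcal{T}_Y|_X \to \mathcal{N}_{X/Y})$; once this is in hand, global generation transfers automatically and the rest of the argument is bookkeeping. This identification requires the regularity of the embedding (to make the conormal sequence exact on the left) together with the smoothness of $Y$ (to guarantee that $\Omega^1_Y|_X$ is locally free and its higher $\sext$ into $\mathcal{O}_X$ vanish), so both hypotheses of the corollary are genuinely used at this step.
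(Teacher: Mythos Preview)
Your approach is correct and is exactly the intended one: the paper does not supply its own proof of this corollary but simply cites \cite[Corollary~12.9]{tziolas2010smoothings}, and your argument reproduces the standard derivation from \Cref{Tziolas: existence of formal smoothing}. The identification of $\mathcal{T}^1_X$ with $\coker(\mathcal{T}_Y|_X\to\mathcal{N}_{X/Y})$ via the conormal sequence is precisely the mechanism Tziolas uses, and your justification of the exactness (regularity of the embedding for left-exactness, smoothness of $Y$ for vanishing of $\sext^1(\Omega^1_Y|_X,\mathcal{O}_X)$) is clean.

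One small gap: \Cref{Tziolas: existence of formal smoothing} also requires $X$ to be \emph{reduced}, which you do not address and which does not follow from the l.c.i.\ hypothesis alone (e.g.\ $\Spec k[x]/(x^2)$ is l.c.i.\ but not reduced). This is arguably a defect of the corollary's statement rather than of your proof, but you should flag it---either assume $X$ reduced, or note that in the intended applications (varieties) this is automatic.
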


Putting together \Cref{coro: lci scheme with normal sheaf satisfy Tziolas conditions} and \Cref{main result} we get the following.

\begin{proposition}
Let $X$ be a singular, projective, l.c.i. variety (i.e. an integral Noetherian scheme of finite type over $k$) over $k$ satisfying conditions $(\text{a})$, $(\text{b})$ and $(\text{c})$ of \Cref{Tziolas: existence of formal smoothing} and such that either its dualising sheaf or its dual is ample. Then $X$ is geometrically smoothable.
\end{proposition}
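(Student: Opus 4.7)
The plan is to assemble the proposition from two ingredients already available in the paper: Tziolas' sufficient criterion for the existence of formal smoothings (\Cref{Tziolas: existence of formal smoothing}) produces a formal smoothing of $X$, and then the main theorem (\Cref{main result}) upgrades that formal smoothing to a geometric one. The role of the hypotheses is precisely to feed these two results in turn.

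First, I would verify the input for \Cref{Tziolas: existence of formal smoothing}. The variety $X$ is projective and, being integral, is automatically reduced and pure-dimensional; by hypothesis $X$ satisfies conditions (a), (b), and (c) (I read the statement as also implicitly requiring (d), since the cited version of Tziolas' theorem invokes global generation of $\mathcal{T}^1_X$). Applying the theorem yields a formal smoothing $\mathfrak{p}\colon\mathfrak{X}\to\mathfrak{S}$ of $X$.

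Next, I would verify the input for part (2) of \Cref{main result}. Since $X$ is a local complete intersection scheme, it is Gorenstein: locally, l.c.i.\ rings are quotients of regular local rings by regular sequences, and such quotients are well known to be Gorenstein. Combined with the projectivity, the equidimensionality (inherited from integrality), the singularity assumption, and the ampleness of either $\omega_X$ or $\omega_X^{\vee}$, this places $X$ squarely in case (2) of \Cref{main result}.

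Finally, I would invoke \Cref{main result} on the formal smoothing $\mathfrak{p}$ built in the first step to conclude that $X$ is geometrically smoothable, which is exactly the claim. No genuine obstacle arises: the proposition is merely a synthesis of Tziolas' formal smoothing criterion with the main theorem of this paper, and the only delicate point is to check that the hypotheses of the two theorems line up, which they do.
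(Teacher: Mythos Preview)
Your proposal is correct and mirrors the paper's own reasoning: the proposition is stated immediately after the sentence ``Putting together \Cref{coro: lci scheme with normal sheaf satisfy Tziolas conditions} and \Cref{main result} we get the following,'' so the paper's proof is precisely the two-step synthesis you describe---obtain a formal smoothing from Tziolas' criterion, then upgrade it via \Cref{main result} using that l.c.i.\ implies Gorenstein. Your observation that condition (d) appears to be tacitly needed is apt; the paper's phrasing is slightly loose on this point.
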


The above result can be used to get information about points on the moduli space in the following sense.

\begin{theorem}
Let $X$ be a projective, l.c.i. variety with $\omega_{X}$ (respectively $\omega_{X}^{\vee}$) ample. Assume that $X$ satisfies also hypotheses (b), (c) and (d) of \cref{Tziolas: existence of formal smoothing}. Then we have that
\begin{enumerate}
\item $X$ represents a point in closure of the open subset of the (algebraic) moduli stack $\mathcal{M}$ of all projective smooth Gorenstein varieties with ample canonical (respectively anti-canonical) sheaf;
\item the general point of the unique irreducible component of $\overline{\mathcal{M}}$ containing $X$ is smooth.
\end{enumerate}
\end{theorem}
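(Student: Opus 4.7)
The strategy is to combine Tziolas' existence theorem for formal smoothings with the main theorem of this paper to produce a geometric smoothing of $X$, and then to translate the resulting one-parameter family into a morphism from an irreducible curve into $\overline{\mathcal{M}}$ whose special point is $[X]$.

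First I would verify the hypotheses of \Cref{Tziolas: existence of formal smoothing}: condition (a) follows because an l.c.i. scheme has complete intersection singularities, while (b), (c), (d) are in the assumptions. This yields a formal smoothing $\mathfrak{p}\colon\mathfrak{X}\to\mathfrak{S}$ of $X$. Since l.c.i. implies Gorenstein and either $\omega_X$ or $\omega_X^{\vee}$ is ample, hypothesis (2) of \Cref{main result} applies, upgrading the formal smoothing to a geometric smoothing $\pi\colon\mathcal{X}\to C$ over a smooth curve $C$, with $X\cong\pi^{-1}(c)$ at a closed point $c\in C$ and smooth generic fibre $\mathcal{X}_{\mathrm{gen}}$.

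The substantive step is to shrink $C$ around $c$ so that $\pi$ defines a family in the moduli stack $\overline{\mathcal{M}}$. Since $X$ is Gorenstein and $\pi$ is flat, \Cref{lemma: Gorenstein scheme and flat morphism imply Gorenstein morphism} gives that $\pi$ is Gorenstein at every point of $X$; combined with the openness of the Gorenstein locus from \Cref{prop: Gorenstein iff dualising complex is invertible in an open subset} and the properness of $\pi$, this lets us shrink $C$ so that $\pi$ is Gorenstein throughout. Hence by \Cref{prop: existence of the relative dualising sheaf} the relative dualising sheaf $\omega_\pi$ is an invertible $\mathcal{O}_{\mathcal{X}}$-module, restricting to the dualising sheaf on each fibre. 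Using the openness of ampleness and of smoothness in a proper flat family, I would further shrink $C$ so that $\omega_{X_t}$ (respectively $\omega_{X_t}^{\vee}$) is ample on every fibre and the smooth locus $C_{\mathrm{sm}}\subset C$ is open, dense, and contains $\eta_C$. The family $\pi$ then induces a morphism $\widetilde{\varphi}\colon C\to\overline{\mathcal{M}}$ which restricts over $C_{\mathrm{sm}}$ to a morphism into $\mathcal{M}$ and sends $c$ to $[X]$, establishing (1).

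For (2), since $C$ is irreducible the image $\widetilde{\varphi}(C)$ is irreducible and so lies in a single irreducible component $Z$ of $\overline{\mathcal{M}}$. The open subset $Z\cap\mathcal{M}$ is non-empty (it contains $\widetilde{\varphi}(C_{\mathrm{sm}})$), hence dense in $Z$, so the generic point of $Z$ lies in $\mathcal{M}$ and parametrises a smooth variety. The main obstacle I expect is the moduli-theoretic bookkeeping in the third paragraph: formulating precisely why, after the successive shrinkings, the family $\pi$ produces a morphism into the stack $\overline{\mathcal{M}}$, and why openness of the Gorenstein and ampleness conditions can be invoked simultaneously on fibres. Once this is set up, the concluding irreducibility argument is essentially formal.
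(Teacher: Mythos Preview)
Your proposal is correct and follows the same route as the paper: verify Tziolas' hypotheses to obtain a formal smoothing, then invoke \Cref{main result} (via the l.c.i.\ $\Rightarrow$ Gorenstein implication) to upgrade it to a geometric smoothing, from which the moduli-theoretic statements follow. In fact your treatment is more detailed than the paper's very terse proof, which simply asserts that geometric smoothability means $X$ lies in the closure of the smooth locus of the moduli stack without spelling out the shrinking and openness arguments you supply.
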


\begin{proof}
The hypothesis of \cref{Tziolas: existence of formal smoothing} and of \cref{main result} are satisfied; hence $X$ is geometrically smoothable. In other words, $X$ represents a point that lies in the closure of the open subset of the moduli stack of projective l.c.i. varieties with ample canonical (respectively anticanonical) sheaf. This proves (1) and (2) above.
\end{proof}

The above theorem has been proved in \cite{fantechi2021smoothing} for the specific case of Godeaux stable surfaces. More precisely, in there the authors verified the hypotheses of Tziolas' \Cref{Tziolas: existence of formal smoothing} and then apply \cref{main result} to show that stable semi-smooth complex Godeaux surfaces appear in the closure of the smooth locus of the moduli stack of stable surfaces of general type and such moduli stack at the point representing the surface has dimension equal to the expected dimension.
%Bibliografia
\printbibliography

\bigskip
\bigskip
\noindent
\textit{Alessandro Nobile}, \texttt{alessandro.nobile@uni.lu}\\
\textsc{Department of Mathematics, Université du Luxembourg}, 6,  av. de la Fonte, L-4364 Esch-sur-Alzette, Luxembourg
\end{document}